\documentclass[11pt]{article}
\input{macros}
\usepackage{mathrsfs}
\usepackage{todonotes}
\allowdisplaybreaks

\title{Free energy of Ising models under a
spectral condition}

\author{Andrea Montanari\thanks{Department of
Mathematics and Department of Statistics, Stanford University} \,\,
and \,\, Michael Ren\thanks{Department of Mathematics, Stanford
University}}
\date{\today}

\begin{document}

\maketitle

\begin{abstract}
A sequence of sparse weighted graphs $(G_N:N\ge 1)$ indexed by the
number of vertices $N$ is said to be left-convergent if all (suitably
weighted) subgraph counts converge to a limit as $N\to\infty$. This
notion generalizes in a natural way Benjamini-Schramm's definition of
local weak convergence. A broad research agenda aims at determining
which `global' graph properties are determined by left or local weak
convergence (in other words, which of these properties are in fact
local). We prove that, under a spectral condition on the weighted
adjacency matrix $\bA_N$ of graph $G_N$, the free energy density of
the Ising model on this weighted graph is continuous in the left
convergence topology (and hence is a local function).

Our proof uses the decomposition of the Ising measure as a log-concave
combination of product measures, and of the rapid mixing of Langevin dynamics for log-concave measures.

 As applications, we derive new limit theorems for the free energy
 density of spin glasses, antiferromagnets, and magnetization
 constrained ferromagnetic models, on locally tree-like graphs.
\end{abstract}

\tableofcontents

\section{Introduction}\label{sec:statement}

In this paper, we consider \emph{left-convergent} sequences of
weighted graphs. This notion of graph convergence was introduced in
\cite{BCLSV08} for sequences of dense graphs and further developed in
\cite{BCLSV12,BCKL13,BCG17,BCCZ19,BCCZ18}. In the case of sparse
graphs, left convergence is a generalization of local weak
convergence, as first introduced by Benjamini and Schramm \cite{BS01}
and subsequently studied by Aldous and co-authors \cite{AS04,AL07}.
(See also Remark \ref{rmk:ConvEquivalence} below.)

For our purposes, we will need a slight generalization of left
convergence that allows for both $\R$-valued weights on edges, to be
interpreted as coupling strengths, and $\R^k$-valued weights on
vertices, to be interpreted as external fields.

Consider a weighted graph $G$ on $N$ vertices $[N]=\{1,2,\ldots,N\}$
with edge weights $a_{ij}=a_{ji}\in\R$ for $i,j\in[N]$ and vertex
weights $(h_{1,i},\ldots,h_{k,i})\in\R^k$ for $i\in[N]$. We represent
this data by the tuple $(\AA,\hh_1,\ldots,\hh_k)$ for a symmetric
matrix $\AA=(a_{ij})_{i,j=1}^N\in\R^{N\times N}$ and vectors
$\hh_\ell=(h_{\ell,1},\ldots,h_{\ell,N})\in\R^N$.

Left-convergent sequences of graphs are defined by the convergence of
weighted homomorphism densities from all finite multigraphs, as will
be stated formally below. To accommodate our extension with external
fields, we define a \emph{test graph} $X$ as a multigraph on $n\ge1$
vertices $[n]=\{1,2,\ldots,n\}$ with $\Z_{\ge0}^k$-valued weights on
vertices. Explicitly, $X$ is defined by the tuple
$(\WW,\dd_1,\ldots,\dd_k)$ for a symmetric matrix
$\WW=(w_{ij})_{i,j=1}^n\in\Z_{\ge0}^{n\times n}$ and vectors
$\dd_\ell=(d_{\ell,1},\ldots,d_{\ell,n}) \in\Z_{\ge0}^n$, where
$w_{ij} =w_{ji}$ is number of edges between vertices
$i,j\in[n]$ and $(d_{1,i},\dots, d_{k,i})$ is the weight on vertex
$i\in[n]$. We also let $c(X)$ denote the number of connected
components of $X$.

For most of the paper, we take $k=1$, i.e. we have only one external
field, and suppress the subscript $\ell$ above, but setting things up
in this generality will help with the proofs later.

\begin{defn}\label{defn:leftconv}
    Let $G=(\AA,\hh_1,\ldots,\hh_k)$ be a weighted graph on vertex set
    $[N]$ and $X=(\WW,\dd_1,\ldots,\dd_k)$ be a test graph on vertex
    set $[n]$. We define the \emph{weighted homomorphism density}
    $t(X,G)$ as the quantity
    \begin{align}
    t(X,G):=\frac1{N^{c(X)}} \sum_{f:[n]\rightarrow[N]} \left(
    \prod_{i\in[n]} \prod_{\ell=1}^k h_{\ell,f(i)}^{d_{\ell,i}}
    \right) \left( \prod_{i\le j\in[n]} a_{f(i)f(j)}^{w_{ij}}
    \right)\, .
    \end{align}
    We say that a sequence $G_1,G_2,\ldots$ of weighted graphs is
    \emph{left-convergent} if for all test graphs $X$, the sequence
    $t(X,G_1),t(X,G_2),\ldots$ converges to a finite value.
\end{defn}

We remark that restricting to simple connected test graphs with zero
vertex weights recovers the notion of left convergence from
\cite{BCKL13}. In particular, our normalization factor $1/N^{c(X)}$
follows the \emph{sparse graph} regime considered in this paper,
though we will also allow our graphs to have unbounded (unweighted)
degree. Using this normalization, we have that $t(X_1\sqcup
X_2,G)=t(X_1,G)t(X_2,G)$ for all test graphs $X_1$ and $X_2$. Thus,
the linear span of $t(X,\cdot)$ over all test graphs $X$ forms an
algebra.

We will throughout assume we have a sequence of graphs $(G_N)_{N\ge1}$ with a
diverging number of vertices $N$, but often omit the subscript $N$ and
not mention the sequence explicitly.

As in \cite{BCLSV08}, it is also possible to consider \emph{injective
homomorphism densities}
\begin{equation}\label{eqn:inj}
    t_\inj(X,G) = \frac1{N^{c(X)}} \sum_{f:[n]\hookrightarrow[N]}
    \left( \prod_{i\in[n]} \prod_{\ell=1}^k h_{\ell,f(i)}^{d_{\ell,i}}
    \right) \left( \prod_{i\le j\in[n]} a_{f(i)f(j)}^{w_{ij}} \right)
\end{equation}
by replacing the sum in Definition~\ref{defn:leftconv} over all
functions $f:[n]\rightarrow[N]$ by a sum over all injections. However,
the notion of left-convergence is unchanged by a standard
inclusion-exclusion argument. Indeed, $t_\inj(X,G)$ can be
written as a finite linear combination of $t(X,G)$ and vice versa whenever $G$ is connected. We
will work with $t(X,G)$ for most of the paper but will switch to
$t_\inj(X,G)$ when convenient.

\begin{rem}\label{rmk:ConvEquivalence}
A closely related notion for sequences of sparse graphs is that of
\emph{local weak convergence}, first introduced in \cite{BS01} and
further developed in \cite{AS04} (for `geometric graphs') and
\cite{AL07,LRW23} (for `marked graphs'). As noted in \cite{BCKL13},
for unweighted graphs of uniformly bounded degree it is not hard to
see that the notions of left convergence and local weak convergence
are equivalent. In our setting of weighted graphs, it is also not hard
to see that left convergence is equivalent to local weak convergence
of marked graphs (as defined in \cite{AL07,LRW23}) when a uniform
bound on the vertex and edge weights and on degrees is also assumed.
 \end{rem}

Left convergence captures the local behavior of a sequence of graphs.
On the other hand, \emph{right convergence} of a sequence of graphs,
loosely defined as the convergence of homomorphism densities
\emph{into} test graphs rather than \emph{from}, captures its global
behavior by encoding the convergence of certain statistical physics
models. While previous works such as
\cite{BCLSV12,BCKL13,BCG17,BCCZ18} consider more general notions of
right convergence and their relation to left convergence, we will
restrict ourselves to the convergence of the free energy of Ising
models.

\begin{defn}\label{defn:ising}
    For a weighted graph $G=(\AA,\hh)$ on $[N]$, the \emph{Ising
    model} is the probability measure $\mu_G$ on $\{-1,1\}^N$
    assigning a mass of
    \begin{align}
    \mu_G(\bsigma)=\frac1{Z(G)} \exp\left( \frac12\<\bsigma,
    \AA\bsigma\> + \<\hh, \bsigma\> \right)\label{eq:ising_measure}
\end{align}
    to spin configurations $\bsigma\in\{-1,1\}^N$. Here,
    \begin{align}
    Z(G) = \sum_{\bsigma\in\{-1,1\}^N} \exp\left( \frac12\<\bsigma,
    \AA\bsigma\> + \<\hh, \bsigma\>
    \right)\label{eq:partition_function_def}
    \end{align}
    is the \emph{partition function} of the model. We furthermore
    define the \emph{normalized free energy} or \emph{free energy
    density} of the model as
    \begin{align*}
F(G) = \frac1N \log Z(G).
\end{align*}
\end{defn}

Our main result is that under certain conditions on the graph
sequence, left convergence implies convergence of normalized free
energies. Informally, for large graphs, the Ising free energy density
is a function of the local structure of the graph. Results of this
type were proved before in three settings:
\begin{enumerate}
\item[$(i)$] For ferromagnetic models (i.e. when
$A_{ij}\ge0$, $h_i\ge 0$ for all $i,j\in[N]$) on locally tree-like
graphs \cite{dembo2010ising,dembo2013factor}. Generalizations of these
results to Potts and random cluster models were obtained in
\cite{dembo2014replica} and \cite{bencs2023random}.
\item[$(ii)$] For general interactions under a
Dobrushin-type condition, which typically requires
$\|A\|_{\infty\to\infty}\le \delta_0$ for some small enough constant
$\delta_0>0$, see \cite{BCKL13}.
\item[$(iii)$] For sparse random graphs, under a
condition on the interactions that makes them amenable to spin glass
interpolation techniques first introduced by Guerra and Toninelli
\cite{guerra2002thermodynamic}. This approach was generalized and
applied to random graph sequences (among others) in
\cite{franz2003replica,bayati2010combinatorial,
abbe2014concentration,gamarnik2014right}.
\end{enumerate}
In summary ---excluding the `special' case of random graphs---
existing techniques require either the monotonicity afforded by
ferromagnetic interactions, or very weak interactions as guaranteed by the
Dorbrushin condition. In the case of random graphs, existing techniques make essential use of the ability to average over a whole ensemble of graphs and in particular cannot access certain deterministic pseudorandom graphs.

In contrast, Theorem \ref{thm:main} below establishes convergence of
Ising free energies for left-convergent graph sequences under a
spectral condition of the form $\lambda_{\max}(\AA),\lambda_{\min}(\AA)\in I$ for any interval $I$ of length less than $1$ which does not depend on $N$. Our
result additionally requires the graph sequence be sparse in the
sense of having $\|\AA\|_{\infty\to\infty}=O(1)$.

As illustrated by the next examples, the spectral condition covers a
significantly broader range examples than within earlier literature,
and is essentially required.
\begin{exm}[Spin glass models on locally
tree-like graphs]
\label{exm:spin-glass}
Let $G_N$ be a $k$-regular graph ($k\ge 3$) with girth $r(G_N)$ such
that $r(G_N)/(\log\log N)^2\to \infty$. Construct $\AA$ by letting
$A_{ij}\sim \Unif(\{-\beta,\beta\})$ independently for all $\{i,j\}\in
E(G_N)$, and $A_{ij}=0$ otherwise. Finally, take $\hh= h\bone_N$.

For this matrix $\AA$, \cite{mohanty2021explicit} (building on
\cite{bordenave2020new}) proves that $\|\AA\|_{2}\le
2\beta\sqrt{k-1}+o_
\P(1)$. Our main result implies that, if
$|\beta|<1/(4\sqrt{k-1})$, then the free energy density $F(G_N)$ converges
to the same finite limit as $N\to\infty$, irrespective of the graph
sequence.

In particular, the limit is the same as the free energy for a random
$k$-regular graph. A conjecture for the latter was derived by M\'ezard
and Parisi in \cite{mezard2001bethe} using the non-rigorous cavity
method from spin glass theory. Franz, Leone, and Toninelli
\cite{franz2003replica} proved an upper bound on the asymptotics that
matches the limit conjectured in \cite{mezard2001bethe}. Finally, for
the case $h=0$, $(k-1)(\tanh\beta)^2<1$, the limit free energy is
rigorously known.

We state the asymptotics for these graph sequences that follow from
our general theory as Corollary \ref{cor:spin-glass} below.
\end{exm}

\begin{exm}[Constrained ferromagnetic model
on locally tree-like graphs]
\label{exm:Constrained}
Consider a sequence of $k$-regular graphs satisfying the conditions of
Example \ref{exm:spin-glass}, but take now $A_{ij}=\beta>0$ if
$(i,j)\in E(G_N)$ and $A_{ij}=0$ otherwise, and also set $h=0$. However, rather than the
partition function $Z(G)$ of Eq.~\eqref{eq:partition_function_def}, we
consider the partition function for balanced configurations, given by
\begin{align}
    Z_{b}(G_N;\eps_N) = \sum_{\substack{\bsigma\in\{-1,1\}^N\\
    |\<\bsigma,\bone\>|\le N\eps_N}} e^{\<\bsigma, \AA_N\bsigma\>/2}\
    \label{eq:partition_function_def_0}
\end{align}
for some $\eps_N\in(0,1)$.
We will show that our main result allows us to characterize the
asymptotics of $F_b(G_N;\epsilon_N) = N^{-1}\log Z_b(G_N;\epsilon_N)$ for suitable sequences
$\eps_N\to 0$, provided that $\lambda_{\max}(\AA^{\perp})
$ and $\lambda_{\min}(\AA^{\perp})$ lie in a fixed interval of length $1-\delta$. Here,
$\AA^{\perp}=\PP^{\perp}\AA\PP^{\perp}$ is the projection of $\AA$
orthogonal to $\bone$ where we define $\PP^{\perp} = \II-\bone\bone^T/N$, and $\delta>0$ is an arbitrary constant. This
is stated as Corollary \ref{cor:Constrained} below.

In particular, if $G$ is a near-Ramanujan graph, this condition is
satisfied for $\beta<1/(4\sqrt{k-1})$.

Notice that this is a significantly broader range than what is covered by
a naive application of the spectral condition $\lambda_{\max}(\AA)
-\lambda_{\min}(\AA)\le 1-\delta$.
\end{exm}

\begin{exm}[Antiferromagnetic model on
locally tree-like graphs]
\label{exm:antiferromagnetic}
Finally under the same graph model as in the previous two examples,
take now $A_{ij}=-\beta \le 0$ if $\{i,j\}\in E(G_N)$ and $A_{ij}=0$
otherwise, and $\hh=\bzero$. Again we will obtain a characterization
of the asymptotics of $F(G_N)$ for $\lambda_{\max}(\AA^{\perp})
-\lambda_{\min}(\AA^{\perp})<1-\delta$, see Corollary
\ref{cor:Antiferromagnetic} below.
\end{exm}

\subsection*{Notation}

We collect here a few conventions used throughout the paper.
Vectors and matrices are written in boldface.  
The Euclidean inner product on $\R^N$ is denoted by
$\<\, \cdot\, ,\,\cdot\,\>$.

For a symmetric matrix $\MM\in\R^{N\times N}$, we denote
by $\lambda_1(\MM)\le \cdots\le \lambda_N(\MM)$ its eigenvalues.
We also write
$\lambda_{\max}(\MM)=\lambda_N(\MM)$ and $\lambda_{\min}(\MM)=\lambda_1(\MM)$
 for its largest and
smallest eigenvalues. The $\ell_2$ operator norm of a matrix
$\BB$ is denoted by $\|\BB\|$.

The collection of all probability measures on a measurable space $\Omega$ is denoted
by $\cuP(\Omega)$.

\section{Statement of main
results}\label{sec:main}

Let $\cG$ be a set of weighted graphs $G=(\AA,\hh)$ with one external
field. As mentioned, the functions $t(X,\,\cdot\,):\cG\rightarrow\R$
over all test graphs $X$ span an algebra of functions on $\cG$. We let
$\cL(\cG)$ denote the completion of this algebra with respect to the
uniform norm on $\cG$, which can be interpreted as the Banach algebra
of \emph{local functions} on $\cG$. By definition, $\{G_N:N\ge
1\}\subseteq\cG$ is a left-convergent sequence of graphs if and only
if, for each local function $f\in\cL(\cG)$, $\lim_{N\to\infty} f(G_N)$
exists and is finite.

We next state formally our assumptions on the graph sequence.
\begin{defn}\label{defn:conditions}
    Let $I\subset\R$ be a closed interval and $C>0$ be a constant. We
    define $\cG_{I,C}$ to be the set of finite weighted graphs
    $G=(\AA,\hh)$ satisfying the following conditions:
    \begin{enumerate}[label=(SW), ref=(SW)]
        \item\label{cond:SW}
        All eigenvalues of $\AA$ lie in $I$. If additionally the
        length of $I$ is strictly less than $1$, we refer to this as
        the \emph{spectral width condition}.
    \end{enumerate}
    \begin{enumerate}[label=(SG), ref=(SG)]
        \item\label{cond:SG}
        The operator norm bound $\|\AA\|_{\infty\rightarrow\infty}\le
        C$ holds, i.e. the $\ell^1$ norm of any row of $\AA$ is at
        most $C$. We refer to this as the \emph{sparse graph
        condition}.
    \end{enumerate}
    \begin{enumerate}[label=(BF), ref=(BF)]
        \item\label{cond:BF}
        The entrywise bound $\|\hh\|_\infty\le C$ holds, i.e. the
        magnitude of the external field $h_i$ on each spin $i\in[N]$
        is at most $C$. We refer to this as the \emph{bounded field
        condition}.
    \end{enumerate}

\end{defn}

Here is an `abstract' statement of our main result.

\begin{thm}\label{thm:main}
    Let $F$ be the free energy density function from
    Definition~\ref{defn:ising}. Then, for any constant $C>0$ and interval $I$ of length less than $1$, $F$ is a local function on $\cG_{I,C}$,
    i.e. $F\in\cL(\cG_{I,C})$.
\end{thm}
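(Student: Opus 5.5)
The plan has four steps: reduce to a positive-definite contraction, pass to a strongly log-concave continuous measure, express $F$ as an integral of expectations under that measure, and show these expectations are local.

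\textbf{Step 1: reduction.} Since $\sigma_i^2=1$, replacing $\AA$ by $\AA-a\II$ changes $F(G)$ by exactly $-a/2$. Such a constant is trivially in $\cL(\cG_{I,C})$ because $N^{-c(X)}$ normalizes the empty-edge test graph to $1$. As $I$ has length $<1$, we fix $a$ depending only on $I$ so that all eigenvalues of $\AA-a\II$ lie in $[\delta,1-\delta]$ for some $\delta=\delta(I)>0$. The shifted graphs still satisfy \ref{cond:SG} and \ref{cond:BF} with constant $C+|a|$. So we may assume $\delta\II\preceq\AA\preceq(1-\delta)\II$.

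\textbf{Step 2: Hubbard--Stratonovich decomposition.} Write
\[
e^{\<\bsigma,\AA\bsigma\>/2}=c_\AA\int e^{\<\AA\xx,\bsigma\>-\<\xx,\AA\xx\>/2}\,d\xx .
\]
Summing over $\bsigma$ expresses $\mu_G$ as a mixture of product measures with fields $\AA\xx+\hh$. The mixing law has density proportional to
\[
\exp\Bigl(-\tfrac12\<\xx,\AA\xx\>+\sum_i\log 2\cosh\bigl((\AA\xx)_i+h_i\bigr)\Bigr).
\]
Its negative log-Hessian is $\AA-\AA\DD\AA$ with $0\preceq\DD\preceq\II$ diagonal, hence it is $\succeq \AA(\II-\AA)\succeq\delta^2\II$. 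The mixing measure is therefore $\delta^2$-strongly log-concave, uniformly over $\cG_{I,C}$.

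\textbf{Step 3: thermodynamic integration.} Interpolate $\AA_t=(1-t)\tfrac12\II+t\AA$ for $t\in[0,1]$. By convexity, the spectra of $\AA_t$ stay in $[\delta,1-\delta]$, and \ref{cond:SG} is preserved. Then
\[
F(G)=F(\tfrac12\II,\hh)+\int_0^1\frac1{2N}\,\E_{\mu_{t}}\bigl\<\bsigma,(\AA-\tfrac12\II)\bsigma\bigr\>\,dt .
\]
The first term equals $\tfrac14+\frac1N\sum_i\log2\cosh h_i$. This is local, since $\log 2\cosh$ is uniformly approximable by polynomials on $[-C,C]$ and $\frac1N\sum_i h_i^m=t(X,G)$ for a single vertex of weight $m$. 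It therefore suffices to show that, uniformly in $t$, the quantity $\frac1N\sum_{i,j}(A_{ij}-\tfrac12\delta_{ij})\E[\sigma_i\sigma_j]$ lies in $\cL(\cG_{I,C})$. Via the mixture, each term is an $\xx$-average of products of $\tanh((\AA_t\xx)_i+h_i)$, plus diagonal corrections.

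\textbf{Step 4: locality via Langevin dynamics (main obstacle).} Run Langevin dynamics for the log-concave mixing law, started at $\xx_0=\bzero$:
\[
d\xx_s=\bigl(-\AA_t\xx_s+\AA_t\tanh(\AA_t\xx_s+\hh)\bigr)ds+\sqrt2\,d\BB_s .
\]
Strong log-concavity gives $W_2(\mathrm{Law}(\xx_T),\pi)\le e^{-\delta^2T}W_2(\delta_{\bzero},\pi)=e^{-\delta^2T}O(\sqrt N)$. The observable $\frac1N\sum_{ij}A_{ij}\tanh(\cdot)_i\tanh(\cdot)_j$ is $O(1/\sqrt N)$-Lipschitz in $\xx$ in Euclidean norm, using $\|\AA\|\le1$ and $\|\AA\|_{\infty\to\infty}\le C$. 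Its stationary expectation is thus within $O(e^{-\delta^2 T})$ of its value at time $T$, uniformly in $N$. For fixed $T$, we then show that $G\mapsto\E\,\frac1N\sum_{ij}A_{ij}\tanh((\AA\xx_T+\hh)_i)\tanh((\AA\xx_T+\hh)_j)$ is a uniform limit of local functions. The approach has three parts.
\begin{enumerate}
\item Discretize time (Euler scheme with step $\eta$). The error is controlled uniformly in $N$, since the drift is Lipschitz with a constant depending only on $C$.
\item Replace $\tanh$ by a polynomial that approximates it on $[-R,R]$. Gaussian tail bounds on $(\AA\xx_s)_i$ make the contribution from outside $[-R,R]$ small; these bounds come from the $\ell^1$-row bound and a Gr\"onwall estimate in $\|\cdot\|_\infty$ applied to the sub-Gaussian moments.
\item Expand the result. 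It becomes a polynomial in the entries of $\AA$, $\hh$ and the Gaussian increments. Taking the Gaussian expectation pairs the increments by Wick's rule, so each term is a sum over walks with the structure of a test graph. Hence the expression is a finite linear combination of $t(X,G)$.
\end{enumerate}
The hardest part is making all these approximations uniform over $\cG_{I,C}$, that is, independent of $N$. I would control them coordinatewise in $L^2$ using the $\|\cdot\|_{\infty\to\infty}$ bound throughout. Sending $\eta\to0$, $R\to\infty$, $T\to\infty$ in that order gives $F\in\cL(\cG_{I,C})$.
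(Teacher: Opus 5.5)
Steps 1--3 and the Langevin/$W_2$ part of Step 4 are sound, and in places simpler than the paper. Working with $\AA\succ 0$ directly avoids the decomposition $(\AA')^{-1}=(c\II)^{-1}+\BB^{-1}$, the Neumann series and Lemma~\ref{lem:FVlip}. A bounded $O(N^{-1/2})$-Lipschitz observable with $W_2$ contraction replaces the KL/I-projection argument. Starting at $\bzero$ avoids approximating single-site moments.

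\textbf{The gap is Step 4(2).} Once you replace $\tanh$ by a polynomial $p$ in the drift, the object you expand in 4(3) is the chain
\[
\tilde{\yy}^{(n+1)}=\tilde{\yy}^{(n)}-\eta\AA_t\tilde{\yy}^{(n)}+\eta\AA_t\,p\bigl(\AA_t\tilde{\yy}^{(n)}+\hh\bigr)+\sqrt{2\eta}\,\mathbf{g}^{(n)} .
\]
This is not the Euler chain $\yy^{(n)}$ for which you have sub-Gaussian bounds. The discrepancy created at one step is fed through $p$ at every later step. Off $[-R,R]$, $p$ is neither bounded nor Lipschitz, and its degree $d$ must grow with $R$.

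In general the polynomial chain does not inherit sub-Gaussian tails: each application of a degree-$d$ polynomial turns tails $e^{-cu^{\gamma}}$ into $e^{-cu^{\gamma/d}}$. A forward coordinatewise $L^2$ estimate of $\tilde{\yy}^{(n)}-\yy^{(n)}$ therefore needs moments of the polynomial chain of order roughly $d^{(\text{remaining steps})}$. These depend on the growing coefficients of $p$, and nothing in your plan controls them as $R\to\infty$; for a single approximant reused at every step they generally do not stay bounded. Gaussian tail bounds on the \emph{true} $(\AA\xx_s)_i$ do not show that ``the contribution from outside $[-R,R]$'' is small for the chain you actually expand.

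The missing ingredient is the paper's backward telescoping (Lemmas~\ref{lem:backprop} and~\ref{lem:afterbackprop}), with your polynomialized observable in place of $H$.
\begin{itemize}
\item For the polynomial chain set $H_k(\xx)=\E[H(\bpsi^{(K)})\mid\bpsi^{(k)}=\xx]$. By sparsity, each $H_k$ is a polynomial of bounded degree $d_k$ with $\|H_k\|_1\le D_kN$.
\item One then has
\[
\E H(\btau^{(K)})-\E H(\bpsi^{(K)})=\sum_{k=0}^{K-1}\Bigl(\E H_{k+1}(\btau^{(k+1)})-\E H_k(\btau^{(k)})\Bigr).
\]
\item Each summand compares one true step with one polynomial step, both started from the \emph{true} iterate $\btau^{(k)}$. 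Its coordinates are uniformly sub-Gaussian by Lemma~\ref{lem:psi2inftybound}, so no moments of the polynomial chain are ever needed.
\item The approximant at step $k$ is chosen only after $(d_{k+1},D_{k+1})$ are fixed. The approximants are thus step-dependent and selected from the last step backward, so each one's accuracy beats the degree and size of $H_{k+1}$.
\end{itemize}
Your Step 4(2) needs this device, or an equivalent one, to go through.

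Relatedly, the order of limits must be reversed from what you wrote. The polynomial approximation has to be the innermost limit, chosen after $T$ and $\eta$. Sending $\eta\to 0$ first with $p$ fixed is not justified.
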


We also propose a more explicit formulation of the same statement.
%


\begin{cthm}{1'}[Explicit version of
Theorem \ref{thm:main}]
\label{thm:main_explicit}
Let $I$ be an interval of length less than $1$ and $\{G_N:N\ge 1\}$ be a sequence of graphs such that
$\lambda_{\max}(\AA_N),\lambda_{\min}(\AA_N)\in I$ and
$\|\AA_N\|_{\infty\rightarrow\infty},\|\hh_N\| _\infty\le C$ for each
$N\ge 1$.

If the weighted homomorphism densities $t(X,G_N)$ converge to finite
limits $t_{\infty}(X)$ for all test graphs $X$, then the free energy
density $F(G_N)$ converges to a finite limit $F_{\infty}$ depending
uniquely on the limits $\{t_{\infty}(X)\}$.
\end{cthm}

Let $\overline{\cG_{I,C}}$ denote the completion of $\cG_{I,C}$ under
the weak topology induced by $\cL(\cG_{I,C})$. Then
$\overline{\cG_{I,C}}$ is a subset of unimodular random rooted graphs
which is compact under the weak topology, for example by arguments
from \cite{AL07}. Thus, an equivalent way to state
Theorem~\ref{thm:main} is that the free energy density function $F$
extends to a continuous function on $\overline{\cG_{I,C}}$ under the
weak topology, for each interval $I$ of length at most $1-\delta$.

 \begin{rem}\label{rmk:necessity}
 The spectral condition is tight up to a factor of $2$, in the sense specified by
 Proposition \ref{prop:NecessitySpectral} below.

On the other hand, we believe the sparsity condition is not necessary; see the discussion in Section~\ref{sec:future}.
 \end{rem}

\section{Applications}\label{sec:remarks}
\label{sec:Disc}
\label{sec:coro}

In this section we show how Theorem \ref{thm:main} yields concrete
predictions for the three examples of Section \ref{sec:statement}.
Next, we discuss the necessity of the spectral assumption by providing
the construction mentioned in Remark \ref{rmk:necessity}. All proofs
from this section are deferred to Appendix \ref{app:CoroProofs}.

\begin{exmcont}{1}  We state formally
the asymptotics of the free energy of spin glasses on locally
tree-like graphs.
\begin{cor}\label{cor:spin-glass}
Let $G_N=(\AA_N,\hh_N=\bzero)$ be a sequence of weighted $k$-regular
graphs ($k\ge 3$) with girth $r(G_N)/(\log\log N)^2\to \infty$, and
weights $A_{ij}\sim \Unif(\{-\beta,\beta\})$. If
$|\beta|<1/(4\sqrt{k-1})$, then
    \begin{align}\label{eq:SpinGlassFreeEnergy}
    \lim_{N\to\infty} F(G_N) = \log 2+\frac{k}{2}\log \cosh(\beta) .
    \end{align}
    \end{cor}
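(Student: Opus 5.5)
The plan is to apply Theorem~\ref{thm:main_explicit} twice: once to the spin-glass sequence $G_N$, and once to a reference graph whose free energy is explicit and which has the same limiting homomorphism densities. The natural reference is the infinite $k$-regular tree with i.i.d.\ signs, which we approximate by finite high-girth graphs. There is an even simpler route, though: the identity itself suggests computing the limit directly through homomorphism densities. The first step is therefore to show that, under the girth assumption and with $\hh=\bzero$, the densities $t(X,G_N)$ converge. The relevant quantity is $t(X,G_N)$ for the \emph{random} weights, so I will work either almost surely or in probability along a subsequence; the statement should really be read as convergence in probability or almost surely.

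For the convergence of densities, fix a test graph $X$ with $d_i=0$ (the field vanishes, so any $d_i>0$ gives $0$). Consider a map $f:[n]\to[N]$ contributing a nonzero term. The image of each connected component of $X$ is a connected subgraph of $G_N$ of size at most $n$. Since the girth tends to infinity, for large $N$ this image is a tree. Each tree edge $e$ of the image carries the factor $A_e^{m_e}$, where $m_e$ is the total multiplicity of edges of $X$ mapped to $e$. This factor equals $\beta^{m_e}$ if $m_e$ is even. If $m_e$ is odd it equals $\pm\beta^{m_e}$, with a sign that is independent across distinct edges. I will split $t(X,G_N)=\E t(X,G_N)+(t-\E t)$. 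The mean depends only on the count of tree-shaped images in which every edge multiplicity is even. Because $G_N$ is $k$-regular with growing girth, this count per vertex is a deterministic function of $k$ and $X$, so the mean converges. For the fluctuation, I will bound the variance. Two tuples of images that sit far apart in $G_N$ contribute independent terms, and each vertex has only $O(k^{2n})$ nearby tuples, so the variance is $O(1/N)$. Borel--Cantelli, applied along $N$ or after passing to a subsequence, then gives almost-sure convergence for each of the countably many $X$.

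Next come the spectral and sparsity conditions. We have $\|\AA_N\|_{\infty\to\infty}=k|\beta|$ and $\|\hh\|_\infty=0$. By \cite{mohanty2021explicit} we have $\|\AA_N\|\le 2|\beta|\sqrt{k-1}+o_\P(1)$. So if $|\beta|<1/(4\sqrt{k-1})$, then with high probability all eigenvalues lie in $I=[-\tfrac12+\eta,\tfrac12-\eta]$ for some small $\eta>0$, an interval of length less than $1$. Theorem~\ref{thm:main_explicit} then says that $F(G_N)$ converges to a limit determined only by $\{t_\infty(X)\}$. These limits are the same for every sequence satisfying the hypotheses, because they depend only on $k$, $\beta$ and $X$.

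It remains to identify the value. I will pick a convenient comparison sequence with the same $t_\infty$ and compute its free energy exactly. A clean choice is to apply a gauge transformation: replace $\sigma_i$ by $\tau_i\sigma_i$, which maps $A_{ij}$ to $\tau_i\tau_jA_{ij}$. This leaves $Z$ unchanged when $\hh=0$, but it does not make all signs equal. Instead I will use the high-temperature expansion
\[
Z=2^N\prod_{(ij)\in E}\cosh A_{ij}\sum_{S\subseteq E\text{ even}}\prod_{e\in S}\tanh A_e ,
\]
whose prefactor gives exactly $\log 2+\tfrac k2\log\cosh\beta$ per vertex. I then need the normalized log of the even-subgraph sum $\Xi$ to tend to $0$. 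One has $\E\,\Xi=1+\sum_{\text{cycles}}0=1$, because every nonempty even subgraph contains an edge of odd multiplicity, so its expectation vanishes. Hence $\frac1N\log\Xi\le o(1)$ with high probability by Markov. For the matching lower bound, I will use $\E\,\Xi^2=\sum_{S\text{ even}}(\tanh\beta)^{2|S|}$. By high girth and $(k-1)\tanh^2\beta<1$, which follows from $|\beta|<1/(4\sqrt{k-1})$, this sum is $e^{o(N)}$. Paley--Zygmund then gives $\Xi\ge e^{-o(N)}$ with probability bounded below. Combined with the already-established almost-sure convergence of $F(G_N)$, this pins down the limit. The main obstacle is bounding $\E\,\Xi^2$, which requires counting even subgraphs weighted by $(\tanh\beta)^{2|S|}$ on a graph whose girth exceeds $(\log\log N)^2$. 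I would handle it by bounding this weighted count in terms of the spectral radius of the non-backtracking operator scaled by $\tanh^2\beta$.
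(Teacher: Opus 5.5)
\textbf{The lower bound has a gap.} With $\E\Xi=1$, Paley--Zygmund gives $\Pr(\Xi\ge\tfrac12)\ge 1/(4\,\E\Xi^2)$. So a bound $\E\Xi^2=e^{o(N)}$ only produces an event of probability $e^{-o(N)}$, not one of probability bounded below. You also cannot upgrade to $\E\Xi^2=O(1)$ under the girth hypothesis. For any family of vertex-disjoint cycles, $\E\Xi^2=\sum_{S\ \mathrm{even}}(\tanh\beta)^{2|S|}\ge\prod_C\bigl(1+(\tanh\beta)^{2|C|}\bigr)$. Since $r(G_N)$ may be $o(\log N)$, a graph with of order $Ne^{-c\,r(G_N)}$ disjoint cycles of length $r(G_N)$ has $\E\Xi^2\to\infty$.

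An event of vanishing probability is compatible with $F(G_N)\to F_\infty<\log2+\frac k2\log\cosh\beta$ almost surely. Hence the almost-sure convergence from Theorem~1$'$ does not pin down the limit. What is missing is concentration at exponential scale. $F(G_N)$ is a function of the $kN/2$ independent signs, and flipping one changes $\log Z$ by at most $2|\beta|$. McDiarmid's inequality then gives $\Pr(|F(G_N)-\E F(G_N)|\ge s)\le 2e^{-Ns^2/(k\beta^2)}$. This beats $e^{-o(N)}$ and forces $\liminf_N\E F(G_N)\ge\log2+\frac k2\log\cosh\beta$. Jensen ($\E F\le\frac1N\log\E Z$) gives the reverse inequality, and Borel--Cantelli upgrades this to almost-sure convergence.

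\textbf{Consequences of the repair, and comparison with the paper.} Once repaired, your argument no longer needs Theorem~1$'$, the spectral condition, or the convergence of $t(X,G_N)$. Incidentally, an $O(1/N)$ variance bound is not summable, so Borel--Cantelli along the full sequence would have needed a stronger estimate there anyway. The step you flag as the main obstacle is elementary. Choose for each even subgraph a decomposition into edge-disjoint simple cycles, and bound the number of $m$-cycles by $kN(k-1)^{m-1}/(2m)$. This gives $\log\E\Xi^2\le\sum_{m\ge r(G_N)}\frac{kN}{2m}(k-1)^{m-1}(\tanh\beta)^{2m}=o(N)$ whenever $(k-1)\tanh^2\beta<1$ and $r(G_N)\to\infty$.

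This is a genuinely different route from the paper's. The paper applies Theorem~\ref{thm:main}, through the left-convergence lemma and the Mohanty et al.\ spectral bounds for both $G_N$ and a random regular graph, to transfer the problem to random $k$-regular graphs. It then runs a second-moment computation averaged over the configuration model, controlling the overlap rate function $\phi_2(q;\beta)$. Your repaired argument is more elementary and covers any girth $r(G_N)\to\infty$ and the whole range $(k-1)\tanh^2\beta<1$. However, it uses the independence of the signs essentially. The paper's route is the one that extends to deterministic signings satisfying the spectral condition with the same local limit.
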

As mentioned in the introduction, the proof of this result follows
from Theorem \ref{thm:main} which implies that the free energy density
is a local function, under the stated assumptions. We then use known
results about the free energy density of random regular graphs to
obtain the explicit expression of Eq.~\eqref{eq:SpinGlassFreeEnergy}.

We emphasize here that the role of the uniform random signing is only to ensure that the spectral condition holds and that the local weak limit is the uniform random signing of the $k$-regular tree. In particular, the same result holds for any deterministic sequence of signed $k$-regular graphs (converging locally weakly to the i.i.d. uniform random signing of the $k$-regular tree) satisfying the spectral condition.
\end{exmcont}

\begin{exmcont}{2}
Our next result characterizes the free energy density of magnetization-constrained ferromagnetic models.
    \begin{cor}\label{cor:Constrained}
    Let $G_N=(\AA_N,\hh_N=\bzero)$ be a sequence of $k$-regular graphs
    ($k\ge 3$), converging locally to the $k$-regular tree, with edge
    weights $A_{ij}=\beta\ge 0$ for all $\{i,j\}\in E(G_N)$. Recall that
    $\AA^{\perp}_N= \PP^{\perp}\AA_N\PP^{\perp}$, and define
    $F_b(G_N;\eps_N) = N^{-1}\log Z_b(G_N;\eps_N)$, with the
    constrained partition function $Z_b(G_N;\eps_N)$ as per
    Eq.~\eqref{eq:partition_function_def_0}. If
    $\lambda_{\max}(\AA_N^{\perp}),\lambda_{\min} (\AA_N^{\perp})\in I$, for some $N$-independent interval $I$ of length less than $1$, then
    there exists a sequence $\eps_N\to 0$ such that
        \begin{align}
        \lim_{N\to\infty} F_b(G_N;\eps_N) = \log 2+\frac{k}{2}\log
        \cosh(\beta) .
        \end{align}
        \end{cor}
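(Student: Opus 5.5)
The plan is to reduce the constrained partition function to an unconstrained Ising model on a modified weighted graph that satisfies the spectral width condition, then apply Theorem~\ref{thm:main_explicit} and identify the limit via a known model with the same local limit.

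\textbf{Step 1: remove the top eigendirection.} For $\bsigma$ with $|\<\bsigma,\bone\>|\le N\eps_N$, write
\[
\<\bsigma,\AA_N\bsigma\>=\<\bsigma,\AA_N^\perp\bsigma\>+\beta k\<\bsigma,\bone\>^2/N .
\]
This uses that $\bone$ is an eigenvector of $\AA_N$ with eigenvalue $\beta k$, so $\AA_N=\AA_N^\perp+\beta k\bone\bone^T/N$. The last term is at most $\beta k N\eps_N^2=o(N)$. Hence
\[
F_b(G_N;\eps_N)=N^{-1}\log \sum_{|\<\bsigma,\bone\>|\le N\eps_N} e^{\<\bsigma,\AA_N^\perp\bsigma\>/2}+o(1).
\]

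\textbf{Step 2: drop the constraint.} I want to show that the unconstrained sum
\[
Z(\AA_N^\perp):=\sum_{\bsigma} e^{\<\bsigma,\AA_N^\perp\bsigma\>/2}
\]
has magnetization concentrated near zero at exponential scale. Two facts suggest this. The model is symmetric under $\bsigma\mapsto-\bsigma$. Because its spectral width is below $1$, the log-concave decomposition used in the proof of Theorem~\ref{thm:main} should give concentration of $\<\bsigma,\bone\>/N$. Concretely, I would bound the large-deviation cost directly. For any fixed $\eps>0$, tilt by a field $t\bone$ and use convexity of the tilted free energy in $t$, together with its differentiability in the limit, to get vanishing magnetization. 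This gives $\mu(|\<\bsigma,\bone\>|>N\eps)\le e^{-c(\eps)N}$. A diagonal argument then yields $\eps_N\to0$ with $N^{-1}\log Z_b = N^{-1}\log Z(\AA_N^\perp)+o(1)$. This is exactly why the corollary only asserts the existence of some sequence $\eps_N$.

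\textbf{Step 3: locality.} $\AA_N^\perp$ is dense, so Theorem~\ref{thm:main_explicit} does not apply directly, since \ref{cond:SG} fails. I would instead interpolate. Set $\AA_N(\theta)=\AA_N^\perp+\theta\bone\bone^T/N$. Its bulk spectrum lies in $I$, and for $\theta\le 0$ the extra eigenvalue is $\theta\le$ the bulk, so using $\theta$ slightly negative keeps the spectral width below $1$. Replacing $\AA_N$ by $\AA_N-(\beta k-\theta)\bone\bone^T/N$ changes any test-graph density containing an edge by $O(1/N)$ per modified entry. Mean-field rank-one perturbations are invisible to left limits, since their contributions to $t(X,\cdot)$ vanish. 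I expect this to be the main obstacle: Theorem~\ref{thm:main} is stated under \ref{cond:SG}. My first attempt would be to rerun its proof, where the rank-one term only contributes an $O(1)$ Gaussian integral direction. Alternatively, I would decompose $e^{-c\<\bsigma,\bone\>^2/2N}$ as a Gaussian mixture over fields $t\bone$ with $t=O(N^{-1/2})$, which contributes $o(1)$ to the free energy. Either route shows that the limit depends only on the local limit, namely the $k$-regular tree with coupling $\beta$ and zero field.

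\textbf{Step 4: identify the limit.} Apply the same reasoning to random $k$-regular graphs with the same $\beta$. By Friedman's theorem these are near-Ramanujan, so the hypothesis holds whenever it holds for $G_N$. Known results for the balanced, or equivalently zero-magnetization, ferromagnet on random regular graphs give the paramagnetic Bethe value $\log 2+\frac k2\log\cosh\beta$; the first and second moment method applies here since the constraint kills the ferromagnetic phase. By locality, $G_N$ has the same limit.
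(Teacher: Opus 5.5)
\textbf{Step 2 carries the whole argument, and as written its key input is assumed, not proved.}

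Consider the even convex limit $t\mapsto F_\infty(t)$, the free energy of $\AA_N^\perp$ with field $t\bone$. Your tilting argument gives $\mu(|\langle\bsigma,\bone\rangle|>N\eps)\le e^{-c(\eps)N}$ for every $\eps>0$ exactly when $F_\infty$ is differentiable at $t=0$, i.e.\ when there is no spontaneous magnetization. That is the content of Step 2, not an input to it.

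Convexity and the $\bsigma\mapsto-\bsigma$ symmetry cannot supply it. Both also hold for the unconstrained ferromagnet $\AA_N$, whose limit has a corner at $t=0$ when $(k-1)\tanh\beta>1$; for large $k$ the corollary allows such $\beta$.

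The missing input is a finite-$N$ estimate from the spectral condition, and the decomposition you allude to gives it.
\begin{itemize}
\item Shift $\AA_N^\perp$ by a multiple of the identity so its spectrum lies in $[-1+\lambda,-\lambda]$. This does not change the measure, since $\|\bsigma\|^2=N$.
\item For every field $t\bone$, the Ising measure is then a mixture of product measures with fields $c\varphi_i+t$. The mixing law of $\bphi$ is $\alpha$-strongly log-concave with $\alpha=c-c^2$.
\item Total variance plus the Poincar\'e inequality for the mixing law give
\[
\Var_t\bigl(\langle\bsigma,\bone\rangle\bigr)\le N+\frac1\alpha\,\E\Bigl\|\nabla_{\bphi}\sum_{i}\tanh(c\varphi_i+t)\Bigr\|^2\le N\Bigl(1+\frac{c^2}{\alpha}\Bigr)=:KN .
\]
\item Hence $F_N''\le K$ uniformly in $t$ and $N$, and $F_N'(0)=0$ by symmetry. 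A Chernoff bound gives $\mu(|\langle\bsigma,\bone\rangle|>N\eps)\le 2e^{-N\eps^2/(2K)}$ at finite $N$, with no limit needed. Chebyshev alone already handles any $\eps_N\to0$ with $N\eps_N^2\to\infty$.
\end{itemize}
The same bound holds for random regular graphs once Friedman's theorem gives them the spectral condition. The random-graph identification then needs only the exact-balance lower bound and a first-moment bound for $q$ near $0$.

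With this repair your route genuinely differs from the paper's, which never proves concentration directly. The paper applies Theorem~\ref{thm:main_explicit} to both $\AA_N^\perp$ and $\AA_N^\perp+\eta_0\bone\bone^{\sT}/N$. It identifies both limits as $\phi(\beta)$ via Lemma~\ref{lemma:free-energy-random-ferro}, which requires the global sign condition $\partial_q\psi(q;\beta,\gamma)<0$ on $(0,1]$ for $\gamma$ near $k\beta$. It then uses $Z_{u,N}(k\beta;\eps_N)\le e^{-N\eta_0\eps_N^2/2}Z_{u,N}(k\beta-\eta_0;\eps_N)$. This forces $\eps_N\to0$ slowly relative to the unquantified $o(N)$ errors, whereas your repaired route works for any $\eps_N$ with $N\eps_N^2\to\infty$.

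\textbf{Step 3 rests on two false claims.}

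First, \ref{cond:SG} does not fail. It is a row-$\ell^1$ bound, and $\|\bone\bone^{\sT}/N\|_{\infty\to\infty}=1$, so $\|\AA_N^\perp\|_{\infty\to\infty}\le 2k\beta$. Theorem~\ref{thm:main_explicit} therefore applies directly to $\AA_N^\perp$, which meets the spectral hypothesis as stated, so no $\theta\neq0$ is needed. Your ``main obstacle'' is illusory and the workarounds are unnecessary. The Gaussian-mixture workaround is also unsound on its own terms. Representing $e^{-c\langle\bsigma,\bone\rangle^2/2N}$ with $c>0$ requires imaginary fields. Its ``$o(1)$'' claim also presupposes $o(N)$ magnetization, which is Step 2 again.

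Second, rank-one mean-field perturbations are not invisible to left limits. For the single-edge test graph $X$, $t(X,G)=N^{-1}\langle\bone,\AA\bone\rangle$ equals $k\beta$ for $\AA_N$ but $0$ for $\AA_N^\perp$. The $O(1/N)$ entry changes add up over $N^2$ entries. What is true, and what your transfer to random regular graphs needs, is Lemma~\ref{lemma:left-convergence-locally-tree-like}. It says the left limit of $\AA_N-c\bone\bone^{\sT}/N$ is a fixed function of $c$ and the left limit of $\AA_N$, with contributions coming from bridges of $X$. So $G_N^\perp$ and its random-regular counterpart still share a left limit.
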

        \end{exmcont}

    \begin{exmcont}{3}
Finally, we consider the free energy density of antiferromagnetic
models.
    \begin{cor}\label{cor:Antiferromagnetic}
    Let $G_N=(\AA_N,\hh_N=\bzero)$ be a sequence of $k$-regular graphs
    ($k\ge 3$), converging locally to the $k$-regular tree, with edge
    weights $A_{ij}=\beta\le 0$ for all $\{i,j\}\in E(G_N)$. Recall that
    $\AA^{\perp}_N= \PP^{\perp}\AA_N\PP^{\perp}$. If
    $\lambda_{\max}(\AA_N^{\perp}),\lambda_{\min} (\AA_N^{\perp})\in I$, for some $N$-independent interval $I$ of length less than $1$, then
        \begin{align}
        \lim_{N\to\infty} F(G_N) = \log 2+\frac{k}{2}\log \cosh(\beta)
        .
        \end{align}
        \end{cor}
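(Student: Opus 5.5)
The plan is to reduce the antiferromagnetic model on $G_N$ to the model with coupling matrix $\AA_N^{\perp}$, which satisfies all the hypotheses of Theorem~\ref{thm:main_explicit}. Then I identify the limit by comparing with random $k$-regular graphs.

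\emph{Step 1 (rank-one decomposition).} Since $G_N$ is $k$-regular, $\bone$ is an eigenvector of $\AA_N$ with eigenvalue $\beta k$. Hence
\[
\AA_N=\AA_N^{\perp}+\frac{\beta k}{N}\bone\bone^T,
\qquad
\frac12\<\bsigma,\AA_N\bsigma\>=\frac12\<\bsigma,\AA_N^{\perp}\bsigma\>+\frac{\beta k}{2N}\<\bsigma,\bone\>^2 .
\]
Since $\beta\le 0$, the last term is nonpositive, and so $Z(\AA_N,\bzero)\le Z(\AA_N^{\perp},\bzero)$. For the lower bound, Jensen's inequality gives
\[
\frac1N\log Z(\AA_N,\bzero)\ \ge\ \frac1N\log Z(\AA^\perp_N,\bzero)+\frac{\beta k}{2N^2}\,\E_{\mu^\perp}\<\bsigma,\bone\>^2 ,
\]
where $\mu^\perp$ denotes the Ising measure with coupling $\AA_N^\perp$ and zero field. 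By the $\bsigma\mapsto-\bsigma$ symmetry the mean of $\bsigma$ under $\mu^\perp$ is zero. Therefore $\E_{\mu^\perp}\<\bsigma,\bone\>^2=\<\bone,\mathrm{Cov}_{\mu^\perp}\bone\>\le N\|\mathrm{Cov}_{\mu^\perp}\|$.

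The key input here is that, under the spectral width condition on $\AA_N^\perp$, the covariance of $\mu^\perp$ has operator norm bounded uniformly in $N$. I would obtain this from the decomposition of the Ising measure as a log-concave mixture of product measures used in the proof of Theorem~\ref{thm:main}: the mixing measure is strongly log-concave, which controls the covariance of the mixture means, and each product component has covariance at most $\II$. This yields $|F(\AA_N,\bzero)-F(\AA_N^\perp,\bzero)|=O(1/N)\to0$.

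\emph{Step 2 ($\AA_N^\perp$ is admissible and left-convergent).} The spectral condition on $\AA_N^\perp$ is assumed. For (SG), note $\|\AA_N^\perp\|_{\infty\to\infty}\le \|\AA_N\|_{\infty\to\infty}+|\beta|k\le 2|\beta|k$, and (BF) is trivial. For left convergence I would expand $t(X,\AA_N^\perp)$ by choosing, for each edge of $X$, either the sparse part or the dense part $(\beta k/N)\bone\bone^T$. A dense edge contributes a factor $\beta k/N$ and decouples its two endpoints. This can raise the number of components by at most one, which is exactly compensated by the extra $1/N$; when it does not disconnect, the term is $O(1/N)$. Hence each term converges to a product of limits of $t(\cdot,\AA_N)$ on subgraphs of $X$. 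These limits are determined by the local limit, namely the $k$-regular tree with weights $\beta$. Theorem~\ref{thm:main_explicit} then gives that $F(\AA_N^\perp,\bzero)$ converges to a limit depending only on $k$ and $\beta$.

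\emph{Step 3 (identifying the limit).} Since the limit is universal, I would evaluate it along uniformly random $k$-regular graphs. By Friedman's theorem these have $\lambda_{\max}(\AA^\perp),\lambda_{\min}(\AA^\perp)\in[-2|\beta|\sqrt{k-1}-o(1),\,2|\beta|\sqrt{k-1}+o(1)]$. This interval has length less than $1$ in the relevant regime, since the hypothesis forces $|\beta|<1/(4\sqrt{k-1})$ up to $o(1)$. Running Step 1 backwards for this sequence reduces the computation to the antiferromagnet on random regular graphs. That model is known to have free energy $\log 2+\frac k2\log\cosh\beta$ in the high-temperature regime $(k-1)\tanh^2\beta<1$, which our range of $\beta$ implies (for example via the second moment method on the configuration model, or via \cite{dembo2010ising}-type results applied to $|\beta|$ using gauge/sign symmetry on trees).

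The main obstacle I expect is Step 1's uniform covariance bound for $\mu^\perp$; I would extract it directly from the log-concave mixture representation in the proof of the main theorem. A secondary difficulty is making sure the spectral hypothesis really transfers to the random regular comparison sequence in Step 3.
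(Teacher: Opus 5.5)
Your argument is correct. It differs from the paper's in how $F(\AA_N)$ is compared with $F(\AA_N^\perp)$.

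\textbf{How the paper does it.} The paper never bounds a covariance; it works with magnetization-constrained partition functions. Lemma~\ref{lemma:ComparisonBoundedUnnounded} shows that configurations with $|\langle\bsigma,\bone\rangle|>N\eps_N$ carry a negligible share of $Z(\AA_N^\perp)$. It does this by comparing with the ferromagnetically perturbed matrix $\AA_N^\perp+\eta_0\bone\bone^{\sT}/N$. That step applies Theorem~\ref{thm:main} to this second sequence and evaluates Lemma~\ref{lemma:free-energy-random-ferro} at $\gamma=k\beta-\eta_0$. The sign $\beta\le 0$ then suppresses the unbalanced part of $Z(\AA_N)$.

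\textbf{What your route buys.} You use the sign for the upper bound and Jensen plus a uniform covariance bound for the lower bound. This is shorter and gives the quantitative estimate $|F(\AA_N)-F(\AA_N^\perp)|=O(1/N)$. It needs neither the sequence $\eps_N$ nor the perturbed model. The covariance bound is not stated in the paper, but it follows in a few lines from Section~\ref{subs:renormalize}:
\begin{itemize}
\item By the law of total covariance, $\mathrm{Cov}(\bsigma)=\E[\mathrm{Cov}(\bsigma\mid\bphi)]+\mathrm{Cov}(\tanh(c\bphi))$.
\item The first term is diagonal and $\preceq\II$.
\item The $\bphi$-marginal is $(c-c^2)$-strongly log-concave, and $\bphi\mapsto\langle\vv,\tanh(c\bphi)\rangle$ has gradient norm at most $c\|\vv\|$. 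Poincar\'e therefore gives $\mathrm{Cov}(\tanh(c\bphi))\preceq\frac{c}{1-c}\II$.
\item Hence $\mathrm{Cov}_{\mu^\perp}(\bsigma)\preceq\frac{1}{1-c}\II$, uniformly in $N$.
\end{itemize}
In exchange, the paper's route reuses Lemma~\ref{lemma:ComparisonBoundedUnnounded}, which also yields Corollary~\ref{cor:Constrained}. The random-graph input is essentially the same either way. You need the free energy of the unconstrained antiferromagnet on random regular graphs. The paper needs a first-moment bound for $F^{(r)}_N(\gamma)$ and a second-moment lower bound for the balanced antiferromagnet, whose details it omits.

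\textbf{Discard the gauge alternative in Step~3.} You cannot identify the limit by gauging to the ferromagnet at $|\beta|$. Random regular graphs are not bipartite, so no gauge maps the antiferromagnet on them to a ferromagnet. The gauge symmetry of the limiting tree does not determine the graph free energy. Worse, for $k\ge 18$ the admissible range $|\beta|<1/(4\sqrt{k-1})$ contains couplings with $(k-1)\tanh|\beta|>1$. There the ferromagnet (equivalently, the antiferromagnet on bipartite regular graphs) has free energy strictly above $\log 2+\frac k2\log\cosh\beta$. This is exactly the mechanism behind Proposition~\ref{prop:NecessitySpectral}. So the identification must come from the second-moment method on the configuration model, or from known rigorous results for the antiferromagnet on random regular graphs.

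\textbf{Your secondary difficulty resolves cleanly.} Local convergence forces $I\supseteq[-2|\beta|\sqrt{k-1},\,2|\beta|\sqrt{k-1}]$. A slight enlargement of $I$ therefore still has length below $1$. By Friedman's theorem it eventually contains the spectra of the random comparison matrices, so one interval serves the interleaved sequence.
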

    \end{exmcont}

We finally give an example that elucidates the tightness up to constants of the
spectral assumption.
\begin{prop}\label{prop:NecessitySpectral}
For any fixed $\delta>0$, we can construct two graph sequences
$G_N^{(1)}=(\AA^{(1)}_N,\hh^{(1)}_N)$ and
$G_N^{(2)}=(\AA^{(2)}_N,\hh^{(2)}_N)$ with the same left limit such that
$\lambda_{\max}(\AA^{(\ell))}_N)$ and $\lambda_{\min} (\AA^{(\ell)}_N)$ lie in an $N$-independent interval of length $2+\delta$, 
but they have different limiting free energy
densities.
\end{prop}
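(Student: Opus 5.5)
The plan is to use the classical bipartite versus non-bipartite antiferromagnet construction. Both sequences will be $k$-regular and will converge locally to the $k$-regular tree. On the bipartite sequence the antiferromagnet can be gauged into a ferromagnet and shows a phase transition. On the non-bipartite one the antiferromagnet is frustrated and stays at the paramagnetic (annealed) value.

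\emph{Construction.} Fix $\delta>0$ and choose $k\ge 3$ large and $\beta>0$ such that
\[
(k-1)\tanh\beta>1 \quad\text{and}\quad \beta k\le 1+\delta/2 .
\]
This is possible because $\tanh\beta\approx\beta$: we need $\beta$ slightly above $1/(k-1)$, so $\beta k\approx k/(k-1)<1+\delta/2$ once $k$ is large. Let $G^{(1)}_N$ be a uniformly random $k$-regular bipartite graph and $G^{(2)}_N$ a uniformly random $k$-regular graph. Set $\AA^{(\ell)}_N=-\beta\,\mathrm{Adj}(G^{(\ell)}_N)$ and $\hh^{(\ell)}_N=\bzero$. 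We pass to deterministic realizations along which the almost-sure or high-probability statements below hold.

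\emph{Same left limit.} Both sequences have large girth locally, with $O(\log N)$ short cycles, so both converge locally to the $k$-regular tree with all edge weights $-\beta$. With bounded degrees and weights, left convergence is equivalent to local weak convergence (Remark \ref{rmk:ConvEquivalence}), so the two left limits coincide.

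\emph{Spectral interval.} All eigenvalues of $\AA^{(\ell)}_N$ lie in $[-\beta k,\beta k]$, an $N$-independent interval of length $2\beta k\le 2+\delta$. (For $G^{(2)}$ the spectrum is in fact much narrower by Friedman's theorem, but this is not needed.)

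\emph{Free energy of the bipartite sequence.} Flip the spins on one side of the bipartition, $\sigma_i\mapsto-\sigma_i$. This maps the model on $G^{(1)}_N$ exactly to the ferromagnet with coupling $\beta$ and zero field, so $F(G^{(1)}_N)$ equals the ferromagnetic free energy. By \cite{dembo2010ising}, this converges to the Bethe free energy evaluated at the largest fixed point of the tree recursion. When $(k-1)\tanh\beta>1$ that fixed point is nonzero, and the Bethe functional there is strictly larger than its value at the zero fixed point, which is $\log 2+\frac k2\log\cosh\beta$. I expect to verify this strict inequality by a direct computation of the Bethe functional for the $k$-regular tree. Its derivative with respect to the message vanishes at fixed points, and the nonzero fixed point is the maximizer once the zero fixed point becomes unstable. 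This is a one-dimensional calculus check.

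\emph{Free energy of the non-bipartite sequence.} For the random regular graph, bound
\[
\E\log Z\le \log \E Z .
\]
Compute $\E Z$ in the configuration model by summing over the magnetization $m$ and the number of disagreeing edges. The first-moment exponent is maximized at $m=0$: for the antiferromagnet both the entropy and the energy favor balanced configurations. At $m=0$ the exponent equals $\log 2+\frac k2\log\cosh\beta$. Concentration of $\log Z$ (bounded differences under switchings, or Azuma) then gives
\[
\limsup_N F(G^{(2)}_N)\le \log 2+\tfrac k2\log\cosh\beta
\]
almost surely. Conditioning on simplicity costs only an $O(1)$ factor.

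\emph{Conclusion and main obstacle.} Combining the two bounds, the limiting free energies differ. I expect the main care to be needed in the strict-gap computation and in checking that the annealed maximizer is at $m=0$; both are explicit finite-dimensional optimizations.
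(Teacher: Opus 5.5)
Your proof is correct, and it reaches the result by a different route from the paper's on the non-bipartite sequence. Your construction (antiferromagnet on bipartite versus non-bipartite random $k$-regular graphs), the gauge transformation plus the Dembo--Montanari formula on the bipartite side, the spectral interval $[-k\beta,k\beta]$, and the common local limit all match the paper's proof.

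The difference is in the non-bipartite sequence $G^{(2)}_N$. The paper restricts to $\atanh(1/(k-1))<|\beta|<1/(4\sqrt{k-1})$ and invokes Corollary~\ref{cor:Antiferromagnetic} to identify $\lim_N F=\phi(\beta)$ exactly. That step rests on Theorem~\ref{thm:main} itself, on the spectral condition for $\AA^\perp$ of random regular graphs, and on the moment analysis of Lemma~\ref{lemma:free-energy-random-ferro}. You use only the annealed upper bound. This is all that is needed, because the bipartite limit is strictly above $\phi(\beta)$.

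Your claim that the first-moment exponent peaks at zero magnetization holds for every antiferromagnetic coupling. In the notation of the proof of Lemma~\ref{lemma:free-energy-random-ferro}, with $\gamma=0$ and $c<1$, Eq.~\eqref{eq:ferro-gprime-alt} becomes $g'(h)=1+(k-1)\frac{1-c^2}{1+c^2+2c\cosh h}>0$. So your route is more elementary and independent of the main theorem. It also drops the paper's extra upper restriction $|\beta|<1/(4\sqrt{k-1})$, leaving only $(k-1)\tanh\beta>1$ and $k\beta\le 1+\delta/2$. Markov's inequality with Borel--Cantelli already gives the almost-sure bound, so no concentration argument is needed.

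What the paper's route buys is that both limits are shown to exist and are computed. From your bound you only get $\limsup_N F(G^{(2)}_N)\le\phi(\beta)$. To speak literally of two limiting free energies, pass to a subsequence along which the bounded sequence $F(G^{(2)}_N)$ converges. This is harmless, and non-locality follows either way.

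For the strict gap on the bipartite side, the paper writes the limit as $\sup_{h\ge0}\Psi(h;\beta)$ and uses the expansion $\Psi(h;\beta)=\phi(\beta)+\frac{k\theta}{2}[(k-1)\theta-1]h^2+O(h^4)$ with $\theta=\tanh|\beta|$. This gives the inequality at once and spares you the monotonicity analysis up to the largest fixed point.
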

The construction used to prove this proposition is well known in the
literature. It amounts to considering an antiferromagnetic Ising
model, either on random regular graphs or on \emph{bipartite} random
regular graphs. We nevertheless provide a detailed proof for
completeness.




\section{Proof of
Theorem~\ref{thm:main}}\label{sec:mainproof}

In this section we present the proof of Theorem~\ref{thm:main},
deferring some technical results to Appendix~\ref{app:lemmas}. We will begin by
providing an outline of the proof.

\subsection{Proof outline}

The starting point is to transform the Ising model into a continuous
Gibbs measure by a technique from \cite{baker1962certain}. This
transformation expresses the Ising measure \eqref{eq:ising_measure} as
a mixture of product measures on $\{-1,1\}^N$. This identity allows us
to write the free energy of the Ising model as the sum of the free
energy of a certain continuous Gibbs measure on $\R^N$ and a
log-determinant term which is easily shown to lie in $\cL(\cG_{I,C})$.
This part of the proof is carried out in
Section~\ref{subs:renormalize}.

By reducing the problem to computing the free energy of a continuous
model, we are able to leverage analytic tools that are not available
in the discrete setting. As first noted in \cite{BB19} (see also
\cite{BBD24} for a comprehensive survey), this continuous Gibbs
measure is strongly log-concave whenever the original Ising
model satisfies the spectral width condition \ref{cond:SW}. This in
turn implies rapid mixing of Langevin dynamics for this Gibbs measure
via Bakry-\'Emery theory \cite{bakry2014analysis}.

We next use an interpolation argument to express the free energy as
the integral of the average energy over a certain range of inverse
temperature parameters, and the rapid mixing of Langevin dynamics
allows us to approximate this by instead integrating the average
energy achieved by Langevin dynamics after a large constant amount of
time. The log-concavity of the measure is essential to making this
argument work. This part of the proof is carried out in
Sections \ref{subs:langevin} and \ref{sec:UnifConv}.

It then remains to show that the average energy of the renormalized
model achieved by Langevin dynamics after a constant amount of time
lies in $\cL(\cG_{I,C})$. We achieve this via a discretization
argument, where we show first that the Euler-Maruyama scheme computes
this average energy within an arbitrarily small error by adjusting the
step size. After this, we further approximate the output of the
Euler-Maruyama scheme by using polynomial approximations to the
Langevin drift term, which is the gradient of the renormalized
potential. The conditions \ref{cond:SG} and \ref{cond:BF} are crucial
here to ensure that these polynomial approximations can be made
uniform in the system size $N$ through a uniform bound on the
sub-Gaussian norms of each coordinate throughout the discretized
dynamics. Finally, we rewrite this polynomial expression in terms of
weighted homomorphism densities $t(X,G)$, after which the proof is
complete. This part of the proof is carried out in
Section~\ref{subs:discretize}.

\subsection{Reduction to a log-concave Gibbs measure}\label{subs:renormalize}

The goal of this subsection is to reduce proving
Theorem~\ref{thm:main} to proving Theorem~\ref{thm:contloc} (stated
below), which is an analogous result for a continuous variant of the
model with real-valued spins rather than $\{-1,1\}$-valued spins.

Throughout, we will assume that $G=(\AA,\hh)\in\cG_{I,C}$ is a
weighted graph on $[N]$ for a fixed interval $I$ with $|I|\le 1-\delta$. Let $\II\in\R^{N\times N}$ be the identity
matrix. Note that by definition, $F(\AA+t\II,\hh)=F(\AA,\hh)+\frac
t2$. As constant functions are in $\cL(\cG_{I,C})$ by considering
$t(X,\cdot)$ for $X$ the test graph on one vertex with zero vertex and
edge weights, we may shift $I$ and without loss of generality assume
that $I=[-1+\lambda,-\lambda]$ for some $\lambda>0$.

For clarity, we will denote $\AA'=-\AA$ so that the spectrum of $\AA'$
is contained in $[\lambda,1-\lambda]$. Following \cite{BB19}, we write
$(\AA')^{-1}=(c\II)^{-1}+\BB^{-1}$ for $c=c_\lambda=1-\frac\lambda2$,
which means that $\BB$ is positive definite with spectrum contained in
$[\lambda,2\lambda^{-1}]$. Since the Gaussian measure with covariance
$(\AA')^{-1}$ is the convolution of the Gaussian measures with covariances
$\II/c$ and $\BB^{-1}$, we may write
\begin{align*}
\exp\left(-\frac12\<\bsigma, \AA'\bsigma\>\right) =
\frac{\det(c\II)^{1/2}
\det(\BB)^{1/2}}{\det(2\pi\AA')^{1/2}}  \int_{\R^N} \exp\left(-\frac c2
\|\bsigma-\bphi\|^2\right) \exp\left(-\frac12\<\bphi,
\BB\bphi\>\right)\de\bphi,
\end{align*}
which means that
\begin{align*}
\sum_{\bsigma\in\{-1,1\}^N} \exp\left(-\frac12\<\bsigma, \AA'\bsigma\>
+\<\hh, \bsigma\>\right)
\end{align*}
\begin{align*}
=\frac{\det(c\II)^{1/2}
\det(\BB)^{1/2}}{\det(2\pi\AA')^{1/2}}
\int_{\R^N}\left(\sum_{\bsigma\in\{-1,1\}^N} \exp\left(-\frac
c2\|\bsigma-\bphi\|^2 + \<\hh, \bsigma\>\right)\right)
\exp\left(-\frac12\<\bphi, \BB\bphi\>\right)\de\bphi.
\end{align*}

Let $\Tr(\MM)$ and $\tr(\MM)=\tr_N(\MM)=\frac1N\Tr(\MM)$ denote the
trace and normalized trace of an $N\times N$ matrix $\MM$. Defining
the $h$-renormalized potential $V_h:\R\to\R$ by
\begin{align}\label{eqn:Vh}
V_h(\varphi) &:= -\log\left(\exp\left(-\frac c2(\varphi-1)^2+h\right) +
\exp\left(-\frac c2(\varphi+1)^2-h\right)\right) \\
&= \frac
c2(\varphi^2+1) - \log(2\cosh(c\varphi+h))\nonumber
\end{align}
and the $\hh$-renormalized potential $V_\hh:\R^N\to\R$ by
\begin{align*}
V_\hh(\bphi) &:= 
\sum_{i=1}^NV_{h_i}(\varphi_i)\, ,
\end{align*}
 the free energy can be written as
\begin{align}
    F(-\AA',\hh) &= \frac1N\log\frac{\det(c\II)^{1/2}
\det(\BB)^{1/2}}{\det(2\pi\AA')^{1/2}}
    \exp\left(-\frac12\<\bphi, \BB\bphi\>\right)
    e^{-V_\hh(\bphi)}\de\bphi. \label{eqn:FAh}\\ &= -\frac12\tr(2\pi\log\AA')
    + \frac12\tr(\log\BB) + \frac12\log(c) +
    \frac1N\log\int_{\R^N} \exp\left(-\frac12\<\bphi,
    \BB\bphi\>\right) e^{-V_\hh(\bphi)}\de\bphi. \nonumber
\end{align}

Here, $\log\AA'$ and $\log\BB$, and in general $g(\MM)$ for continuous
functions $g:\R\rightarrow\R$ and symmetric matrices $\MM$, are
defined using the usual functional calculus for self-adjoint
operators. The following proposition shows that the first three terms
are in $\cL(\cG_{I,C})$.

\begin{prop}\label{prop:trace}
    For all continuous functions $g:I\rightarrow\R$,
    $\tr(g(\AA))\in\cL(\cG_{I,C})$, i.e. the function on $\cG_{I,C}$
    mapping $G=(\AA,\hh)$ to $\tr(g(\AA))$ is local.
\end{prop}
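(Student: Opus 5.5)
Since all eigenvalues of $\AA$ lie in the compact interval $I$, I will approximate $g$ by polynomials (Weierstrass) and show that $\tr(\AA^m)$ is local for every $m\ge 0$. Then I will use that $\cL(\cG_{I,C})$ is closed under uniform limits.

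For the polynomial step, note that $\tr(\AA^m)=\frac1N\sum_{i_1,\dots,i_m} a_{i_1i_2}a_{i_2i_3}\cdots a_{i_mi_1}$. This is exactly $t(X_m,G)$, where $X_m$ is the test graph consisting of a cycle of length $m$ on $[m]$ with zero vertex weights. For $m\ge 3$ it is the simple $m$-cycle. For $m=2$ it is two vertices joined by $w_{12}=2$ edges. For $m=1$ it is one vertex with a loop, $w_{11}=1$, giving $\frac1N\sum_i a_{ii}$. For $m=0$ it is a single isolated vertex, giving the constant $1$. In each case $c(X_m)=1$, so the normalization $1/N$ matches $\tr$ and $\tr(\AA^m)=t(X_m,G)$ holds exactly. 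By linearity, $\tr(p(\AA))$ lies in the span of the functions $t(X,\cdot)$ for every polynomial $p$.

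For the approximation step, fix a continuous $g:I\to\R$ and $\eps>0$. Weierstrass gives a polynomial $p$ with $\sup_{x\in I}|g(x)-p(x)|\le\eps$. For any $G\in\cG_{I,C}$, the spectrum of $\AA$ lies in $I$ by \ref{cond:SW}, so
\begin{align*}
|\tr(g(\AA))-\tr(p(\AA))|\le \frac1N\sum_{j=1}^N|g(\lambda_j(\AA))-p(\lambda_j(\AA))|\le\eps .
\end{align*}
This bound is uniform over $\cG_{I,C}$. Hence $\tr(g(\AA))$ is a uniform limit of elements of the algebra, and by definition of the completion it belongs to $\cL(\cG_{I,C})$.

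No step here is a real obstacle. The only care needed is to check that the loop ($m=1$) and double-edge ($m=2$) cases fit the multigraph test-graph formalism, which they do through the diagonal entries $w_{ii}$ and the multiplicities $w_{ij}$. To apply the proposition to the terms in \eqref{eqn:FAh} after the shift, I will note that $\AA'=-\AA$ and $\BB=((\AA')^{-1}-c^{-1}\II)^{-1}$ are continuous functions of $\AA$ on the shifted interval, which stays away from the singularities. So $\tr\log\AA'$ and $\tr\log\BB$ are of the form $\tr(g(\AA))$ with $g$ continuous on $I$.
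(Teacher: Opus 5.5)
Your proposal is correct and follows the paper's proof essentially verbatim: you identify $\tr(\AA^m)$ with $t(X,G)$ for the $m$-cycle test graph, then pass to continuous $g$ by Weierstrass approximation on $I$, and the error is uniform over $\cG_{I,C}$ because the spectrum of $\AA$ lies in $I$. Your explicit check of the degenerate cases $m=0,1,2$ (isolated vertex, loop, double edge) is a small, welcome addition that the paper leaves implicit.
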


\begin{proof}
    Note that if $X$ is the cycle on $\ell$ vertices with zero vertex
    weights, then
    \begin{align*}
t(X,G) = \frac1N \sum_{f:\Z/\ell\Z\rightarrow[N]} \prod_{i \in
\Z/\ell\Z} a_{f(i)f(i+1)} = \tr(\AA^\ell).
\end{align*}
    It follows that $\tr(p(\AA))\in\cL(\cG_{I,C})$ for all polynomials
    $p$. On the other hand, the Stone-Weierstrass theorem implies that
    the $\cG_{I,C}$-uniform closure of $\tr(p(\AA))$ over polynomials
    $p$ is given by $\tr(g(\AA))$ over continuous functions
    $g:I\rightarrow\R$, which implies the proposition.
\end{proof}

Indeed, $\AA\mapsto\log\AA'$ and $\AA\mapsto\log\BB$ correspond to the
continuous functions $t\mapsto\log(-t)$ and
$t\mapsto\log(-\frac{ct}{t+c})$ on $[-1+\lambda,-\lambda]$. Thus, to
show that $F\in\cL(\cG_{I,C})$, it remains to study the last term
\begin{equation}\label{eqn:Benergy}
\frac1N\log\int_{\R^N} \exp\left(-\frac12\<\bphi, \BB\bphi\>\right)
e^{-V_\hh(\bphi)}\de\bphi.
\end{equation}
This term can be interpreted as the free energy density of a Gibbs
measure on $\R^N$ with base measure $e^{-V_\hh(\bphi)}\de\bphi$ and
quadratic Hamiltonian $-\frac12\<\bphi, \BB\bphi\>$. Our goal now is
to show that the free energy density of this continuous model is a
local function of $\AA$. However, it is more natural to consider it as
a function of $\BB$. In order to do so, we first show that, very
loosely speaking, $\BB$ is a continuous function of $\AA$ under the
local topology so local functions of $\BB$ are also local functions of
$\AA$.

\begin{lem}\label{lem:polyhom}
    Let $p\in\R[x]$ be a polynomial. For all test graphs $X$, there
    exist test graphs $X_1,\ldots,X_r$ and constants
    $c_1,\ldots,c_r\in\R$ (depending on $p$ and $X$)
    such that for all weighted graphs
    $G=(\AA,\hh)$, we have that
    \begin{align*}
t(X,(p(\AA),\hh))=\sum_{i=1}^rc_it(X_i,G).
\end{align*}
    In other words, weighted homomorphism densities into the graph
    defined by a polynomial of the adjacency matrix of $G$ can be
    written as \emph{fixed} linear combinations of other weighted
    homomorphism densities into $G$. Consequently, local functions of
    $(p(\AA),\hh)$ are also local functions of $G$.
\end{lem}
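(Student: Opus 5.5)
We expand each entry of $p(\AA)$ as a weighted sum over walks in $G$; this turns every edge of $X$ into a path with the appropriate multiplicity. Write $p(x)=\sum_{m=0}^{D}\alpha_m x^m$. Then
\[
p(\AA)_{uv}=\sum_{m=0}^D\alpha_m\sum_{u=v_0,v_1,\ldots,v_m=v}\prod_{s=1}^m a_{v_{s-1}v_s},
\]
where the $m=0$ term is $\alpha_0\mathbf{1}[u=v]$. Let $X=(\WW,\dd)$ on $[n]$. We substitute this expansion into every factor $p(\AA)_{f(i)f(j)}$, for each $i\le j$ and each of the $w_{ij}$ parallel edges between $i$ and $j$. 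Multiplying out the product over all edges gives a sum indexed by the choices $\mathbf m=(m_e)_{e\in E(X)}$, where $E(X)$ is the edge multiset of $X$ and each $m_e$ ranges over $\{0,\ldots,D\}$. The choice $\mathbf m$ carries the coefficient $\prod_e\alpha_{m_e}$.

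For a fixed $\mathbf m$ we build a test graph $X_{\mathbf m}$ as follows.
\begin{itemize}
\item Each edge $e=\{i,j\}$ with $m_e\ge1$ is replaced by a path of length $m_e$ from $i$ to $j$ through $m_e-1$ new internal vertices. The internal vertices get zero vertex weights, and a loop becomes a cycle through $i$.
\item Each edge with $m_e=0$ is contracted, identifying its endpoints; a loop with $m_e=0$ simply contributes the factor $1$.
\end{itemize}
The original vertex weights $\dd$ are carried along, and the weights of identified vertices are added. The sum over $f:[n]\to[N]$ together with the sums over the intermediate walk vertices then becomes exactly the sum over homomorphisms $V(X_{\mathbf m})\to[N]$. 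The indicator constraints from contracted edges are built into the quotient, and the product of vertex weights $\prod_i h_{f(i)}^{d_i}$ becomes the corresponding product over $X_{\mathbf m}$. This yields
\[
\sum_{f}\Big(\prod_i h_{f(i)}^{d_i}\Big)\prod_{i\le j}p(\AA)_{f(i)f(j)}^{w_{ij}}=\sum_{\mathbf m}\Big(\prod_e\alpha_{m_e}\Big)\,N^{c(X_{\mathbf m})}\,t(X_{\mathbf m},G).
\]

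The step that needs care is the normalization $N^{-c(\cdot)}$. Subdividing an edge into a path never changes the connected components. Contracting an edge with $m_e=0$ identifies two vertices that were already adjacent in $X$, so it does not merge components either. Hence $c(X_{\mathbf m})=c(X)$ for every $\mathbf m$, the powers of $N$ cancel, and
\[
t(X,(p(\AA),\hh))=\sum_{\mathbf m}\Big(\prod_e\alpha_{m_e}\Big)\,t(X_{\mathbf m},G).
\]
This is a finite sum with at most $(D+1)^{|E(X)|}$ terms, and its coefficients depend only on $p$ and $X$, not on $G$. That is the first claim. The same argument works with $k$ external fields, since the vertex weights are only carried along and never interact with the edge expansion.

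For the consequence, the identity shows that each generator $t(X,(p(\AA),\hh))$ lies in the linear span of $\{t(Y,\cdot)\}$, hence in $\cL$. The map $G\mapsto(p(\AA),\hh)$ preserves products and sums of these functions. A uniform limit of polynomials in the $t(X,(p(\AA),\hh))$ over the relevant class is therefore also a uniform limit of elements of the algebra generated by the $t(Y,G)$. It follows that every local function of $(p(\AA),\hh)$ is a local function of $G$. The only subtlety is to take the uniform closure over the image class of graphs, which is done simply by composing with the map $G\mapsto(p(\AA),\hh)$.
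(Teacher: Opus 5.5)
Your proof is correct and is essentially the paper's argument: you expand each entry of $p(\AA)$ as a sum over walks, subdivide edges with $m_e\ge 2$ into paths (loops into cycles), contract edges with $m_e=0$ while adding the vertex weights, and read off a fixed linear combination of homomorphism densities. The only cosmetic difference is that the paper first reduces to connected $X$ using multiplicativity of $t$ under disjoint unions, whereas you treat general $X$ directly by noting that subdivision and contraction preserve the number of components, so $c(X_{\mathbf m})=c(X)$ and the powers of $N$ cancel.
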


\begin{proof}
    Although the notation is a bit cumbersome, the proof idea is simply to
    expand the left-hand side and take $X_i$ to be given by blowing up edges of
    $X$ with paths corresponding to degrees of monomials in $p$. Let
    $X=(\WW,\dd)$ for $\WW=(w_{ij})_{i,j=1}^n\in\Z_{\ge0}^{n\times n}$
    and $\dd\in\Z_{\ge0}^n$.

    Since $t$ is multiplicative in $X$ under disjoint unions, it
    suffices to prove the lemma when $X$ is connected. In this case,
    let $E=E(X)$ be the multiset of edges in $X$ and
    $p(x)=\sum_{\ell=0}^m c_\ell x^\ell$. For
    $(\ell_e)\in\{0,1,\ldots,m\}^E$, let $c_{(\ell_e)}=\prod_{e\in
    E}c_{\ell_e}$ and let $X_{(\ell_e)}$ denote the test graph
    resulting from the following procedure:
    \begin{enumerate}
        \item Replace each edge $e\in E$ of $X$
        with a path of length $\ell_e$ if $\ell_e>1$, adding new
        vertices to $X$ for internal vertices. These new internal
        vertices $v$ not originally in $X$ are assigned a weight $d_v$
        of $0$. If $e$ is a self-loop then this creates a cycle of length $\ell_e$ passing through the vertex of $e$.
        \item Contract each edge $e\in E$ of $X$
        with $\ell_e=0$ and identify the endpoints $e_1$ and $e_2$ of
        $e$ as a single vertex $v$. The weight $d_v$ of this
        contracted vertex $v$ is the sum $d_{e_1}+d_{e_2}$ of the
        weights of the endpoints of $e$ before the contraction if $e_1\ne e_2$ and is $d_{e_1}$ otherwise, i.e. we sum the weights unless the edge is a self-loop in which case it is just deleted.
    \end{enumerate}

    It is easy to see that the order in which the blowups and
    contractions of edges are performed does not matter and that the
    result of this procedure is a well-defined test graph.
     By our definitions, we then have the
    following chain of equalities:
    \begin{align*}
        t(X,(p(\AA),\hh)) &= \frac1N \sum_{f:[n]\rightarrow[N]} \left(
        \prod_{i\in[n]} \prod_{\ell=1}^k h_{\ell,f(i)}^{d_{\ell,i}}
        \right) \left( \prod_{i\le
        j\in[n]}(p(\AA))_{f(i)f(j)}^{w_{ij}} \right) \\ &= \frac1N
        \sum_{f:[n]\rightarrow[N]} \left( \prod_{i\in[n]}
        \prod_{\ell=1}^k h_{\ell,f(i)}^{d_{\ell,i}} \right) \left(
        \prod_{e = \{e_1,e_2\} \in E}(p(\AA))_{f(e_1)f(e_2)} \right)
        \\ &= \frac1N \sum_{f:[n]\rightarrow[N]} \left(
        \prod_{i\in[n]} \prod_{\ell=1}^k h_{\ell,f(i)}^{d_{\ell,i}}
        \right) \left( \sum_{(\ell_e)\in\{0,1,\ldots,m\}^E}
        \prod_{e\in E} c_{\ell_e}(\AA^{\ell_e})_{f(e_1)f(e_2)} \right)
        \\ &= \sum_{(\ell_e)\in\{0,1,\ldots,m\}^{E}} c_{(\ell_e)}
        \left( \frac1N \sum_{f:[n]\rightarrow[N]} \left(
        \prod_{i\in[n]} \prod_{\ell=1}^k h_{\ell,f(i)}^{d_{\ell,i}}
        \right) \left( \prod_{e\in E}(\AA^{\ell_e})_{f(e_1)f(e_2)}
        \right) \right) \\ &= \sum_{(\ell_e)\in\{0,1,\ldots,m\}^{E}}
        c_{(\ell_e)} t(X_{(\ell_e)},G).
    \end{align*}

    This is indeed an expression of $t(X,(p(\AA),\hh))$ as a linear
    combination of $t(X_i,G)$ that does not depend on $G$. It follows
    that if $f$ is a local function of $(p(\AA),\hh)$, then
    $f(p(\AA),\hh)$ is a local function of $G$. Formally, by this we
    mean that if $I'\subset\R$ is a closed interval containing the
    image of $I$ under $p$ and
    $C'\ge\max(C,\sum_{\ell=0}^m|c_\ell|C^\ell)$, then for all local
    functions $f\in \cL(\cG_{I',C'})$, the map $(\AA,\hh)\mapsto
    f(p(\AA),\hh)$ is in $\cL(\cG_{I,C})$. Indeed, any uniform
    approximation of $f$ by linear combinations of $t(X,(p(\AA),\hh))$
    can be written as linear combinations of $t(X,G)$ by what we have
    just shown.
\end{proof}

Now, we will show that \eqref{eqn:Benergy} is a local function of
$(\BB,\hh)$ on an appropriate set of graphs $(\BB,\hh)$. This does not
immediately imply Theorem~\ref{thm:main} by Lemma~\ref{lem:polyhom}
since $\BB=-\frac{c\AA}{\AA+c\II}$ is not a polynomial function of
$\AA$. However, we will be able to apply Lemma~\ref{lem:polyhom} to
arbitrary truncations of the Neumann series
\begin{equation}\label{eqn:neumann}
\BB=\sum_{\ell\ge1}-(-c)^{-\ell+1}\AA^\ell
\end{equation}
and use a Lipschitz property of the free energy density
\eqref{eqn:Benergy} in $\BB$ to conclude the desired result. In order
to carry out this argument, we first rewrite things in terms of a more
general setup.

\begin{defn}\label{defn:contFV}
    For $\alpha>0$, an \emph{$\alpha$-good potential} is a
    function $V:\R\times \R\to\R$,  $(h,\varphi)\mapsto V_h(\varphi)$ 
    which is twice continuously differentiable 
    with 
    \[
        \sup_{\substack{|h|\le C \\ \varphi\in\R}}
        \left| \frac\partial{\partial h} V_h'(\varphi)\right| < \infty
    \]
    for all $C>0$ and $V_h''(\varphi)\in[\alpha,\alpha^{-1}]$
    for all $h,\varphi\in\R$ (where  $V_h'$ and $V_h''$ are the first and
     second  derivatives of $V_h$ with respect to $\varphi$). In other words, this family of potential
    functions is smooth and uniformly strongly convex with Lipschitz
    gradient, and furthermore $V_h'$ is Lipschitz in $h$. For
    $\hh\in\R^N$, we define the potential
    \begin{align*}
V_\hh(\bphi):=\sum_{i=1}^NV_{h_i}(\varphi_i)\, .
\end{align*}

    Given an $\alpha$-good potential $V$ and
    a weighted graph $G=(\BB,\hh)$ on $[N]$ with $\BB\succ-\alpha\II$,
    we consider the Gibbs measure on $\R^N$ with base measure
    $e^{-V_\hh(\bphi)}\de\bphi$ and quadratic Hamiltonian
    $-\frac12\<\bphi, \BB\bphi\>$ and denote its free energy density
    by
    \begin{align*}
F_V(G)=F_V(\BB,\hh)=\frac1N \log \int_{\R^N} \exp\left( -\frac12
\<\bphi, \BB\bphi\> \right) e^{-V_\hh(\bphi)} \de\bphi.
\end{align*}
\end{defn}

Note that if $V$ is $\alpha$-good, then $F_V(\BB,\hh)$ is indeed
well-defined for all $\BB$ with $\BB\succ-\alpha\II$ by the
$\alpha$-strong convexity of $V_\hh$ as the Gibbs measure is then
strongly log-concave. Also, the renormalized potential \eqref{eqn:Vh}
from the Ising model satisfies
\begin{align*}
    1>c\ge V_h''(\varphi) &= c-\frac{c^2}{\cosh^2(c\varphi+h)}\ge
    c-c^2>0 \\ \left| \frac\partial{\partial h}V_h'(\varphi) \right|
    &= \left| -\frac c{\cosh^2(c\varphi+h)} \right| \le c
\end{align*}
and is clearly smooth in $h$ and $\varphi$ so it is a $(c-c^2)$-good
 potential. We will henceforth work in the generality of
$\alpha$-good potentials.

In Sections~\ref{subs:langevin} and~\ref{subs:discretize}, we will prove the following result,
which can be viewed as the continuous analogue of
Theorem~\ref{thm:main} in a strongly convex potential.

\begin{thm}\label{thm:contloc}
    For any $\alpha$-good potential $V$ and $C>0$, we have
    that $F_V\in\cL(\cG_{[0,C],C})$, where $F_V$ is the free energy
    density function from Definition~\ref{defn:contFV}.
\end{thm}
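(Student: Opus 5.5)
Here $\cG_{[0,C],C}$ means $\BB\succeq 0$ with $\|\BB\|\le C$, $\|\BB\|_{\infty\to\infty}\le C$ and $\|\hh\|_\infty\le C$, and the Gibbs measure $\mu_{s}\propto \exp(-\frac s2\<\bphi,\BB\bphi\>-V_\hh(\bphi))$ is $\alpha$-strongly log-concave for every $s\in[0,1]$. The plan follows the outline of Section~\ref{sec:mainproof}: write $F_V$ as an integral of average energies, replace each equilibrium average by the average after a fixed time of Langevin dynamics, and show that the latter is local.

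\textbf{Interpolation.} We interpolate in the coupling:
\begin{align*}
F_V(\BB,\hh)=F_V(\bzero,\hh)-\int_0^1 \frac1{2N}\E_{\mu_s}\<\bphi,\BB\bphi\>\,\de s .
\end{align*}
The first term is $\frac1N\sum_i \log\int e^{-V_{h_i}(\varphi)}\de\varphi=\frac1N\sum_i g(h_i)$, where $g$ is continuous on $[-C,C]$. By Weierstrass, $g$ is a uniform limit of polynomials, and $\frac1N\sum_i h_i^d$ is $t(X,G)$ for $X$ a single vertex of weight $d$. So this term is local.

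\textbf{Langevin approximation.} Run $\de\bphi_t=-\nabla H_s(\bphi_t)\de t+\sqrt2\,\de\bW_t$ with $H_s=\frac s2\<\bphi,\BB\bphi\>+V_\hh$, started from $\bphi_0=\bzero$. By Bakry--\'Emery, $W_2(\mathrm{Law}(\bphi_T),\mu_s)\le e^{-\alpha T}W_2(\delta_\bzero,\mu_s)$, and $W_2(\delta_\bzero,\mu_s)^2=O(N)$ because the minimizer of $H_s$ has norm $O(\sqrt N\,\|\nabla V_\hh(\bzero)\|_\infty/\alpha)$ and the fluctuations are $O(\sqrt{N/\alpha})$. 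Since $\bphi\mapsto\<\bphi,\BB\bphi\>$ has gradient bounded by $2C\|\bphi\|$, a coupling gives
\begin{align*}
\tfrac1N\bigl|\E\<\bphi_T,\BB\bphi_T\>-\E_{\mu_s}\<\bphi,\BB\bphi\>\bigr|\le C' e^{-\alpha T},
\end{align*}
uniformly in $N$ and $s$. Hence it suffices to show that $s\mapsto\frac1N\E\<\bphi_T,\BB\bphi_T\>$ is local uniformly in $s\in[0,1]$ for each fixed $T$; the $s$-integral can then be handled by a Riemann sum, using Lipschitz dependence on $s$ on a finite time horizon.

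\textbf{Locality of finite-time dynamics.} This is the main obstacle. We discretize by Euler--Maruyama with step $\eta$:
\begin{align*}
\bphi_{k+1}=\bphi_k-\eta\bigl(s\BB\bphi_k+\nabla V_\hh(\bphi_k)\bigr)+\sqrt{2\eta}\,\bg_k .
\end{align*}
Because the drift is $\alpha^{-1}+C$-Lipschitz, the per-coordinate averaged squared error is $O(\eta)$ over $[0,T]$, uniformly in $N$. Next we replace $V'_{h}(\varphi)$ by a polynomial $P(h,\varphi)$ that is accurate on $|h|\le C$ and $|\varphi|\le R$. Here (SG) is needed. Each coordinate $\varphi_{k,i}$ is a sum of $\sqrt\eta$-scaled Gaussians propagated through the matrix $I-\eta s\BB$, whose $\infty\to\infty$ norm is at most $1+\eta C$, plus Lipschitz terms. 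An induction on $k$ then gives sub-Gaussian norms of every coordinate bounded by $K(T)$ independent of $N$, and this bound survives the polynomial truncation if the truncation is done on a large enough box with a Lipschitz cutoff. Consequently the tail error from $|\varphi|>R$ is $e^{-cR^2}$ per coordinate, and the total error is $o(1)$ uniformly in $N$.

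\textbf{Graph expansion.} After these replacements, $\bphi_k$ is a polynomial in the Gaussians $\bg_0,\dots,\bg_{k-1}$, in $\hh$, and in $\BB$. Expanding $\frac1N\E\<\bphi_K,\BB\bphi_K\>$ and applying Wick's formula, each term is a sum over index assignments with products of entries of $\BB$ and powers of $h_i$. Gaussian pairings force indices to coincide and so glue vertices together. Every term is therefore a fixed linear combination of $t(X,(\BB,\hh))$ over connected test graphs $X$, normalized by $1/N$ because the expression starts from a single sum over the root index. This shows the approximant lies in the algebra of local functions. Combining the error bounds from the interpolation, Langevin and discretization steps, all uniform over $\cG_{[0,C],C}$, gives $F_V\in\cL(\cG_{[0,C],C})$.
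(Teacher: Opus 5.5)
Your interpolation, boundary-term and Langevin steps are sound. In places they are simpler than the paper's: the $W_2$-contraction under $\alpha$-strong convexity, together with $|\langle \xx,\BB\xx\rangle-\langle \yy,\BB\yy\rangle|\le C\|\xx-\yy\|\|\xx+\yy\|$ and the mode bounds $\|\xx^*\|\le\|\nabla V_\hh(\bzero)\|/\alpha$ and $\E\|\bphi-\xx^*\|^2\le N/\alpha$, replaces the paper's KL/I-projection/Bregman argument and Lemma~\ref{lem:secondmoment}.

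The gap is in passing from Euler--Maruyama to a polynomial expression. You need two things at once. First, the modified drift must be globally Lipschitz; the coordinatewise sub-Gaussian induction and the Gronwall-type propagation of the replacement error use this, which is why you add a ``Lipschitz cutoff''. Second, $\bphi_K$ must be a polynomial in the Gaussians, so that Wick's formula produces homomorphism densities. These requirements are incompatible.

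With the cutoff, $P(h,\mathrm{clip}_R(\varphi))$ is not a polynomial, so the graph expansion does not apply. Without it, $P$ is not Lipschitz; it must have large degree to approximate $V_h'$ on a large box. Then an error made at step $k$ is amplified at later steps by the local Lipschitz constant of $P$ along the polynomial iterate. That iterate is no longer sub-Gaussian, and its moments depend on $P$ in an uncontrolled way. So the $e^{-cR^2}$ tail bound for the true iterate does not yield a total error $o(1)$. With a single fixed $P$ the argument is circular: making $P$ more accurate makes the amplification worse.

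The missing idea is the paper's backward induction (Lemmas~\ref{lem:backprop} and~\ref{lem:afterbackprop}). Set $H_K=H$ and $H_k(\xx)=\E[H_{k+1}(\xx-\beta\delta\BB\xx-\delta\PP_{\ell_k}(\hh,\xx)+\sqrt{2\delta}\bet^{(k)})]$. By (SG), each $H_k$ is a polynomial of bounded degree with $\|H_k\|_1\le D_kN$, and $\E H_0(\btau^{(0)})=\E H(\bpsi^{(K)})$. The polynomial $P_{\ell_k}$ used at step $k$ is chosen only after $H_{k+1}$ is known, so that $|\E H_{k+1}(\btau^{(k+1)})-\E H_k(\btau^{(k)})|\le\eps N/K$.

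Crucially, this one-step error is evaluated along the \emph{true} Euler--Maruyama iterate $\btau^{(k)}$, whose coordinates are uniformly sub-Gaussian. A weighted approximation $\sup e^{-A\varphi^2}|V_h'(\varphi)-P(h,\varphi)|<\eps$, rather than approximation on a box, makes that error small in every $L^p$. Telescoping then compares $\E H(\btau^{(K)})$ with $\E H(\bpsi^{(K)})$ without ever needing moment or Lipschitz control of the polynomial dynamics $\bpsi$. With this in place, your Wick-formula expansion finishes the proof as you describe, and starting from $\bzero$ even removes the need to substitute single-site moments.
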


In other words, the renormalized free energy density is a local
function of the (positive semi-definite) coupling matrix $\BB$ and external
field $\hh$, given a uniform bound on the maximum eigenvalue of $\BB$ 
and on  $\|\hh\|_\infty$. To complete the proof of
Theorem~\ref{thm:main}, we need the following lemma to show that we
can pass to the limit in the Neumann series \eqref{eqn:neumann}. Its
proof relies on a uniform bound on the second moment of certain Gibbs
measures from Lemma~\ref{lem:secondmoment} and is deferred to
Section~\ref{sec:UnifConv}.

\begin{lem}\label{lem:FVlip}
    For any $\alpha$-good potential $V$ and $C>0$, there
    exists a constant $L>0$ depending only on $V$ and $C$ such that
    for any positive semidefinite $N\times N$ matrices $\BB_1$ and
    $\BB_2$ with spectrum contained in $[0,C]$ and vectors
    $\hh\in\R^N$ with $\|\hh\|_\infty\le C$, we have that
    \begin{align*}
|F_V(\BB_1,\hh)-F_V(\BB_2,\hh)|\le L\|\BB_1-\BB_2\|.
\end{align*}
   In other words, $F_V(\BB,\hh)$ is a
    Lipschitz function of $\BB\in\{\MM:\bzero\preceq\MM\preceq C\II\}$
    under the  operator norm. 
\end{lem}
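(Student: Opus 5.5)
The plan is to interpolate linearly between $\BB_2$ and $\BB_1$ and bound the derivative of the free energy along the path. Set $\BB_t=(1-t)\BB_2+t\BB_1$ for $t\in[0,1]$. Every $\BB_t$ is positive semidefinite with spectrum in $[0,C]$. Hence the Gibbs measure
\[
\mu_t(\de\bphi)\propto \exp\Big(-\tfrac12\<\bphi,\BB_t\bphi\>-V_\hh(\bphi)\Big)\de\bphi
\]
is $\alpha$-strongly log-concave, and $F_V(\BB_t,\hh)$ is finite and smooth in $t$. Differentiating under the integral, which dominated convergence justifies thanks to the Gaussian tails coming from strong convexity, gives
\[
\frac{\de}{\de t}F_V(\BB_t,\hh)=-\frac1{2N}\,\E_{\mu_t}\big[\<\bphi,(\BB_1-\BB_2)\bphi\>\big].
\]
Its absolute value is at most $\frac{\|\BB_1-\BB_2\|}{2N}\E_{\mu_t}\|\bphi\|^2$. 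Integrating over $t\in[0,1]$, it therefore suffices to show
\[
\sup_t \frac1N\E_{\mu_t}\|\bphi\|^2\le M
\]
for some $M$ depending only on $V$ and $C$. This is exactly the content of Lemma~\ref{lem:secondmoment}, and we then take $L=M/2$.

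If that lemma must be proved here, I would argue as follows. The measure $\mu_t$ has potential $U(\bphi)=\frac12\<\bphi,\BB_t\bphi\>+V_\hh(\bphi)$, whose Hessian is at least $\alpha\II$. Let $\bphi^*$ be the minimizer of $U$. Strong log-concavity gives Brascamp--Lieb (or Poincaré with constant $1/\alpha$), so each coordinate has variance at most $1/\alpha$ and the trace of the covariance is at most $N/\alpha$. Strong log-concavity also controls the distance between the mean and the mode, via the bound $\E\|\bphi-\bphi^*\|^2\le N/\alpha$ obtained from integrating by parts $\E\<\nabla U(\bphi),\bphi-\bphi^*\>=N$ together with $\<\nabla U(\bphi),\bphi-\bphi^*\>\ge\alpha\|\bphi-\bphi^*\|^2$. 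It then remains to bound $\|\bphi^*\|^2/N$.

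The first-order condition reads $\BB_t\bphi^*+\big(V'_{h_i}(\varphi_i^*)\big)_i=0$. Write $V'_{h}(\varphi)=V'_h(0)+\int_0^\varphi V''_h$. The bound $\sup_{|h|\le C}|\partial_h V'_h|<\infty$ gives $|V'_{h}(0)|\le K$ uniformly over $|h|\le C$. Taking the inner product of the first-order condition with $\bphi^*$ and using $\BB_t\succeq 0$ and $V''\ge\alpha$ yields $\alpha\|\bphi^*\|^2\le K\sqrt N\|\bphi^*\|$. Hence $\|\bphi^*\|^2/N\le K^2/\alpha^2$, so $M=2/\alpha+2K^2/\alpha^2$ works.

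The main point requiring care is this uniform second-moment bound, specifically that it is independent of $N$ and of $\BB$. The argument above uses only $\BB\succeq0$ and the uniform bound $|V_h'(0)|\le K$ for $|h|\le C$. The justification for differentiating under the integral sign is routine given the Gaussian tails.
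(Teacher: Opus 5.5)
Your argument for the lemma itself is the paper's: differentiate $F_V$ along the segment, identify the derivative as $-\frac{1}{2N}\E_{\mu_t}\langle\bphi,(\BB_1-\BB_2)\bphi\rangle$, bound it by $\frac{\|\BB_1-\BB_2\|}{2N}\E_{\mu_t}\|\bphi\|^2$, and invoke Lemma~\ref{lem:secondmoment}.

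Your optional self-contained proof of the second-moment bound is correct but takes a different route from the paper's proof of Lemma~\ref{lem:secondmoment}. The paper shifts $\frac{\alpha\beta}{8}\|\bphi\|^2$ from the potential into the Hamiltonian and uses that $\beta\mapsto\langle\tilde H\rangle_\beta$ is nonincreasing to compare with the product measure at $\beta=0$. You instead center at the mode $\bphi^*$, combining the integration-by-parts identity $\E\langle\nabla U,\bphi-\bphi^*\rangle=N$ with the first-order-condition bound $\|\bphi^*\|\le K\sqrt N/\alpha$. Your route is shorter and handles directly the nonzero-mean issue raised in the paper's remark. It also gives a constant independent of the upper spectral bound on $\BB$, so the Lipschitz estimate extends to all of $\{\BB\succeq\bzero\}$. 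The Brascamp--Lieb step you mention is not needed.
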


\begin{proof}[Proof of Theorem~\ref{thm:main}]

    Recall that the spectrum of $\AA$ is contained in
    $[-1+\lambda,-\lambda]$ and $c=1-\frac\lambda2$, so
    $\frac{\|\AA\|}c\le\frac{2-2\lambda} {2-\lambda}<1$ and the
    Neumann series
    \begin{align*}
\BB=\sum_{\ell\ge1} -(-c)^{-\ell+1} \AA^\ell
\end{align*}
    converges in operator norm. Defining
    \begin{align*}
\BB_k=\sum_{\ell=1}^k -(-c)^{-\ell+1} \AA^\ell,
\end{align*}
    we have the sparsity bound
    \begin{align*}
\|\BB_k\|_{\infty\rightarrow\infty} \le \sum_{\ell=1}^k c^{-\ell+1}
\|\AA\|_{\infty\rightarrow\infty}^\ell \le C_k
\end{align*}
    for $C_k=\sum_{\ell=1}^k c^{-\ell+1}C^\ell$.

    By Theorem~\ref{thm:contloc}, $F_V\in\cL(\cG_{[0,C'],C'})$ for
    $C'=\max(C,C_k)$, so applying Lemma~\ref{lem:polyhom} to the
    polynomial $p(x)=\sum_{\ell=1}^k(-1)^\ell c^{-\ell+1}x^\ell$ gives us that
    the function given by $(\AA,\hh)\mapsto F_V(\BB_k,\hh)$ is in
    $\cL(\cG_{I,C})$. On the other hand, we have by
    Lemma~\ref{lem:FVlip} that there exists a constant $L$ depending
    only on $V$ and $C''=\max(C,c\sum_{\ell\ge1}(\frac{2-2\lambda}
    {2-\lambda})^\ell)$ such that
    \begin{align*}
|F_V(\BB,\hh)-F_V(\BB_k,\hh)|\le L\|\BB-\BB_k\| \le cL\sum_{\ell>k}
\left(\frac{2-2\lambda}{2-\lambda}\right) ^\ell.
\end{align*}
    This bound is uniform in $G=(\AA,\hh)\in\cG_{I,C}$ and tends to
    $0$ as $k\rightarrow\infty$, so as functions of $G=(\AA,\hh)$, we
    have that $F_V(\BB,\hh)=\lim_{k\rightarrow\infty} F_V(\BB_k,\hh)$
    in $\cL(\cG_{I,C})$.

    This concludes the proof. Indeed, recall from \eqref{eqn:FAh} that
    we have the identity
    \begin{align*}
F(\AA,\hh) = -\frac12\tr(\log(-2\pi\AA)) + \frac12\tr(\log\BB) +
\frac12\log(c) + F_V(\BB,\hh),
\end{align*}
    and each of these terms is in $\cL(\cG_{I,C})$ by
    Proposition~\ref{prop:trace} and what we have just argued.
\end{proof}

It thus remains to prove Theorem~\ref{thm:contloc} and
Lemma~\ref{lem:FVlip}, which occupy the remainder of this section.

\subsection{Interpolation and Langevin
dynamics}\label{subs:langevin}

Our goal now is to prove Theorem~\ref{thm:contloc}. In this
subsection, we will reduce this to proving
Proposition~\ref{prop:unifconv} and Proposition~\ref{prop:locenergy}.
We will then prove Proposition~\ref{prop:unifconv} in  Section \ref{sec:UnifConv}
and Proposition~\ref{prop:locenergy} in Section \ref{subs:discretize}.

For now, we will fix $\hh$ and $\BB$ up to a constant factor and
consider the function $F_G:[0,\infty)\rightarrow\R$ given by
$F_G(\beta):=F_V(\beta\BB,\hh)$, where $G=(\BB,\hh)$. Here, $V$ is a
fixed $\alpha$-good family of potentials and is suppressed from the
notation. We will interpret $F_G(\beta)$ as the free energy of a Gibbs
measure $\mu_\beta$ and expectations with respect to $\mu_\beta$ will
be denoted by $\langle\, \cdot\,\rangle_\beta$. We use $\bphin$
to denote a random vector with distribution $\bphin\sim\mu_\beta$. Hence
\begin{align*}
\langle f(\bphin)\rangle_\beta & = \int_{\R^N} f(\bphi)\, \mu_\beta(\de\bphi)\\
& =
e^{-NF_G(\beta)} \int_{\R^N}f(\bphi) \exp\left(-\frac12\<\bphi,
\beta\BB\bphi\>\right) e^{-V_\hh(\bphi)} 
\de\bphi\, .
\end{align*}
for bounded measurable functions $f$. 
We can interpret  $\beta$ as an inverse temperature parameter. We also let
$\Var_\beta(\,\cdot\,)$, $\Cov_\beta(\,\cdot\,)$ denote the variance and
covariance with respect to $\mu_\beta$. For convenience, we denote the
Hamiltonian of this system by $H(\bphi)=\frac12\<\bphi, \BB\bphi\>$.

Recall the standard statistical physics identity
$F_G'(\beta)=-\frac1N\langle H(\bphin)\rangle_\beta$, which gives
rise to the interpolation scheme
\begin{align*}
F_V(G) = F_G(1) = F_G(0) + \int_0^1 F_G'(\beta)\de\beta =
F_V(\bzero,\hh) - \int_0^1 \frac1N\langle H(\bphin)\rangle_\beta
\de\beta.
\end{align*}
Our approach will be to show that $F_V(\bzero,\hh)$ and
$\frac1N\langle H(\bphin)\rangle_\beta$ are in $\cL(\cG_{[0,C],C})$.
First, we will dispense with the boundary term $F_G(0)$.

\begin{prop}\label{prop:FV0h}
    The function on $\cG_{[0,C],C}$ mapping $G=(\BB,\hh)$ to
    $F_V(\bzero,\hh)$ is in $\cL(\cG_{[0,C],C})$.
\end{prop}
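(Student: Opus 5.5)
With $\BB=\bzero$ the Gibbs measure factorizes:
\begin{align*}
F_V(\bzero,\hh)=\frac1N\sum_{i=1}^N g(h_i),\qquad g(h):=\log\int_\R e^{-V_h(\varphi)}\de\varphi .
\end{align*}
The plan is to show that $g$ is continuous on $[-C,C]$ and then approximate it uniformly by polynomials. For a polynomial $p(x)=\sum_{m} p_m x^m$, the quantity $\frac1N\sum_i p(h_i)$ equals $\sum_m p_m\, t(X_m,G)$, where $X_m$ is the one-vertex test graph with no edges and vertex weight $d=m$. Indeed, $t(X_m,G)=\frac1N\sum_{i\in[N]} h_i^m$, since $c(X_m)=1$. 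Such expressions are therefore finite linear combinations of homomorphism densities.

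First I check that $g$ is finite and continuous on $[-C,C]$. Since $V_h''\in[\alpha,\alpha^{-1}]$, we have the two-sided quadratic bounds
\begin{align*}
V_h(0)+V_h'(0)\varphi+\tfrac\alpha2\varphi^2\le V_h(\varphi)\le V_h(0)+V_h'(0)\varphi+\tfrac1{2\alpha}\varphi^2 .
\end{align*}
The map $h\mapsto V_h(0)$ is continuous because $V$ is $C^2$. The map $h\mapsto V_h'(0)$ is continuous (indeed Lipschitz by the $\alpha$-good hypothesis), so both are bounded on $[-C,C]$. The lower bound gives an integrable Gaussian envelope $e^{-a+b|\varphi|-\alpha\varphi^2/2}$, uniform for $|h|\le C$. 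By joint continuity of $V$ and dominated convergence, $h\mapsto\int e^{-V_h}$ is continuous on $[-C,C]$. The upper bound shows the integral is bounded below by a positive constant, so $g$ is continuous on $[-C,C]$.

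Given $\varepsilon>0$, Weierstrass gives a polynomial $p$ with $\sup_{|h|\le C}|g(h)-p(h)|\le\varepsilon$. By condition (BF) every $G\in\cG_{[0,C],C}$ has all $h_i\in[-C,C]$, so
\begin{align*}
\Bigl|F_V(\bzero,\hh)-\sum_m p_m t(X_m,G)\Bigr|\le\varepsilon
\end{align*}
uniformly on $\cG_{[0,C],C}$. Hence $F_V(\bzero,\hh)$ lies in the uniform closure $\cL(\cG_{[0,C],C})$.

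The only mildly delicate step is the uniform-in-$h$ domination needed for continuity of $g$, and the quadratic bounds from $\alpha$-goodness handle it directly. Everything else is the same Stone--Weierstrass pattern as in Proposition~\ref{prop:trace}.
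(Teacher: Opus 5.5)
Your proposal is correct and follows the paper's proof essentially step for step: factorize $F_V(\bzero,\hh)=\frac1N\sum_i\log Z(h_i)$, obtain continuity of $Z$ on $[-C,C]$ by dominated convergence, and approximate $\log Z$ uniformly by polynomials, realized through one-vertex test graphs with vertex weight $m$. Your two-sided quadratic bounds from $V_h''\in[\alpha,\alpha^{-1}]$ simply make explicit the uniform-in-$h$ domination that the paper leaves implicit.
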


\begin{proof}
    We have that
    \begin{align*}
F_V(\bzero,\hh) = \frac1N \sum_{i=1}^N \log \int_\R
e^{-V_{h_i}(\varphi)}\de\varphi = \frac1N \sum_{i=1}^N\log Z(h_i)
\end{align*}
    for $Z(h)=\int_\R e^{-V_h(\varphi)}\de\varphi$. Since $V_h(\varphi)$
    is smooth in $h$ and is $\alpha$-strongly convex, $Z(h)$ is a
    continuous function on $[-C,C]$ by the dominated convergence
    theorem. By considering $t(X,G)=\frac1N\sum_{i=1}^Nh_i^k$ for $X$
    the test graph consisting of one vertex of weight $k$, we have
    that $\frac1N\sum_{i=1}^Np(h_i)$ is in $\cL(\cG_{[0,C],C})$ for
    all polynomials $p\in\R[x]$. The result then follows by the
    Stone-Weierstrass theorem, since we can approximate $\log Z$ by
    polynomials on $[-C,C]$.
\end{proof}

Our goal is now to prove that the main term $\int_0^1\frac1N\langle
H(\bphin)\rangle_\beta \de\beta$ is also in $\cL(\cG_{[0,C],C})$.

Before proceeding to the actual proof, let us outline our strategy. 
We will prove two facts: 
\begin{enumerate}
\item[{\sf S1.}] $\langle
H(\bphin)\rangle_\beta/N$ is in $\cL(\cG_{[0,C],C})$ for each $\beta$;
\item[{\sf S2.}] $\beta\mapsto \frac1N\langle H(\bphin)\rangle_\beta$ is
a norm-continuous map from $[0,1]$ to $\cL(\cG_{[0,C],C})$
(where $\cL(\cG_{[0,C],C})$ is endowed with the uniform norm). 
\end{enumerate}
By the second property, the integration may be carried out in
$\cL(\cG_{[0,C],C})$ as a Bochner integral, and hence the integral is in $\cL(\cG_{[0,C],C})$.

To
prove  norm-continuity, we will first find approximations
$\langle\,\cdot\,\rangle^{(t)}_\beta$ of the Gibbs measures
$\langle\,\cdot\,\rangle_\beta$ such that
\begin{equation}\label{eqn:(t)conv}
    \frac1N \langle H(\bphin)\rangle_\beta^{(t)}
    \quad\stackrel{t\rightarrow\infty} \longrightarrow\quad \frac1N
    \langle H(\bphin)\rangle_\beta
\end{equation}
uniformly over $(\beta,G)\in[0,1]\times\cG_{[0,C],C}$. Then, it
suffices to prove that $\beta\mapsto \frac1N\langle
H(\bphin)\rangle_\beta^{(t)}$ is a norm-continuous map from $[0,1]$
to $\cL(\cG_{[0,C],C})$ for each $t$. This will be accomplished by
constructing uniform approximations of the path
$\beta\mapsto\frac1N\langle H(\bphin)\rangle_\beta^{(t)}$ given by
polynomials in $\beta$ with coefficients that are linear combinations
of $t(X,G)$ for suitable test graphs $X$.
%

We now proceed with the actual proof. We consider Langevin dynamics with respect to $\mu_\beta$,
defined by the stochastic differential equation (SDE)
\begin{equation}\label{eqn:langevin}
    \de\bphin^{(t)} = -\beta\grad H(\bphin^{(t)})\,\de t -\grad
    V_\hh(\bphin^{(t)}) \de t +\sqrt{2} \de\ww^{(t)}
\end{equation}
for $\ww^{(t)}$ a standard Brownian motion on $\R^N$. This diffusion
has invariant measure $\mu_\beta$. Furthermore, for an initial
condition $\bphin^{(0)}$ with law $\mu_\beta^{(0)}$, we have that
\begin{equation}\label{eqn:KLcontract}
    D(\mu_\beta^{(t)}\|\mu_\beta)\le e^{-\alpha
    t}D(\mu_\beta^{(0)}\|\mu_\beta),
\end{equation}
where $\mu_\beta^{(t)}$ denotes the law of $\bphin^{(t)}$ and
$D(\,\cdot\,\|\,\cdot\,)$ denotes Kullback-Leibler (KL) divergence
(a.k.a. relative entropy). 
This holds by \cite[Theorem 5.2.1 and Corollary 5.7.2]{bakry2014analysis} since $\BB$ is positive
semidefinite so $H$ is convex and $V_\hh$ is $\alpha$-strongly convex
by construction. 

We initialize the dynamics at infinite temperature
$\mu^{(0)}_\beta=\mu_0$, and denote by $\langle\,\cdot\,\rangle_\beta^{(t)}$ denote
expectation with respect to $\mu_\beta^{(t)}$.
We will use the measures
$\mu_\beta^{(t)}$ as approximations to $\mu_\beta$ 
to prove Theorem~\ref{thm:contloc} via \eqref{eqn:(t)conv}.
The next two propositions formalize steps
{\sf S1} and {\sf S2} in our proof outline above.
\begin{prop}\label{prop:unifconv}
    We have the uniform convergence
    \begin{align*}
\lim_{t\rightarrow\infty}\sup_{(\beta,G) \in [0,1] \times
\cG_{[0,C],C}} \left| \frac1N \langle H(\bphin)\rangle_\beta^{(t)} -
\frac1N \langle H(\bphin)\rangle_\beta\right| = 0.
\end{align*}
\end{prop}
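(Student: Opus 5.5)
We plan to bound the energy gap by an entropy-type quantity and then use the KL contraction \eqref{eqn:KLcontract}, with every estimate normalized by $N$ and uniform over $(\beta,G)\in[0,1]\times\cG_{[0,C],C}$.

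\textbf{Step 1: initial divergence is $O(N)$.} Since $\mu_0$ is a product measure, we can write
\[
D(\mu_0\|\mu_\beta)=-\beta\langle H\rangle_0+N\big(F_G(\beta)-F_G(0)\big).
\]
Both terms are $O(N)$ uniformly. Indeed, $F_G(\beta)\le F_G(0)$ because $H\ge 0$ (as $\BB\succeq 0$). Moreover
\[
\frac1N\langle H\rangle_0\le \frac{C}{2N}\langle\|\bphin\|^2\rangle_0,
\]
and the right-hand side is bounded because each coordinate under $\mu_0$ is $\alpha$-strongly log-concave. Its mean is bounded uniformly for $|h|\le C$, since the minimizer of $V_h$ moves Lipschitzly in $h$ by the bound on $\partial_h V_h'$. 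Hence $D(\mu_0\|\mu_\beta)\le K_0N$, and
\[
D(\mu^{(t)}_\beta\|\mu_\beta)\le K_0Ne^{-\alpha t}.
\]

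\textbf{Step 2: convert KL control into control of $\frac1N\langle H\rangle$.} We would use the Donsker--Varadhan variational inequality: for any $s>0$,
\[
s\big(\langle H\rangle^{(t)}_\beta-\langle H\rangle_\beta\big)\le D(\mu^{(t)}_\beta\|\mu_\beta)+\log\big\langle e^{s(H-\langle H\rangle_\beta)}\big\rangle_\beta,
\]
together with the same inequality with $-s$. Since $\mu_\beta$ is $\alpha$-strongly log-concave, it satisfies a log-Sobolev inequality with constant $\alpha$. The function $H$ is not globally Lipschitz, but $\grad H=\BB\bphi$, so $H$ is $C\|\bphi\|$-Lipschitz. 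I would therefore write the exponential moment in the form $\log\langle e^{s(H-\langle H\rangle)}\rangle_\beta$ and bound it by Herbst-type arguments for quadratic functions under strong log-concavity. The target is a sub-exponential bound
\[
\log\big\langle e^{s(H-\langle H\rangle)}\big\rangle_\beta\le K s^2 N \quad\text{for } |s|\le s_0,
\]
with $s_0$ and $K$ depending only on $\alpha$ and $C$. A concrete route is to note that the gradient norm satisfies $\|\BB\bphi\|^2\le C^2\|\bphi\|^2$, and that $\|\bphi\|^2$ has exponential moments of order $e^{O(N)}$ under $\mu_\beta$. This follows from strong log-concavity and a bounded mean, the latter obtained via the second-moment lemma (Lemma~\ref{lem:secondmoment} referenced earlier).

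Combining the two steps gives
\[
\frac1N\big|\langle H\rangle^{(t)}_\beta-\langle H\rangle_\beta\big|\le \frac{K_0e^{-\alpha t}}{s}+Ks \qquad (s\le s_0).
\]
Choosing $s=\sqrt{K_0e^{-\alpha t}/K}$, which is admissible for $t$ large, yields a bound of order $e^{-\alpha t/2}$, uniform in $(\beta,G)$.

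\textbf{Alternative for the main obstacle.} I expect the sub-exponential concentration of $H$ in Step 2 to be the main obstacle. If it proves troublesome, I would instead couple the processes synchronously. Run a second Langevin process from $\mu_\beta$ with the same Brownian motion. The drift is $\alpha$-strongly monotone, so $\|\bphin^{(t)}-\tilde\bphin^{(t)}\|\le e^{-\alpha t}\|\bphin^{(0)}-\tilde\bphin^{(0)}\|$ almost surely. Then use
\[
|H(x)-H(y)|\le \tfrac C2\|x-y\|\big(\|x\|+\|y\|\big)
\]
together with Cauchy--Schwarz. This reduces everything to uniform bounds on $\frac1N\mathbb E\|\bphin\|^2$ along the dynamics and at stationarity. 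Those bounds follow from Lemma~\ref{lem:secondmoment} and an Itô computation for $\|\bphin^{(t)}\|^2$ using the strong convexity of $V_\hh$ with bounded $\grad V_\hh(\bzero)$. This coupling route gives a rate $e^{-\alpha t}$ and is likely the cleaner choice.
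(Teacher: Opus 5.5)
Your proposal is correct. The coupling branch is complete as it stands; the Donsker--Varadhan branch works once its concentration step is filled in as described below. The coupling branch is a genuinely different and more elementary route than the paper's.

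\textbf{The paper's route.} The paper works purely at the level of entropy. It replaces $\mu^{(t)}_\beta$ by its I-projection onto the measures with the same mean energy. This projection is a tilted Gibbs measure $\mu_\kappa$ with $D(\mu_\kappa\|\mu_\beta)\le D(\mu^{(t)}_\beta\|\mu_\beta)\le e^{-\alpha t}LN$. By the exponential-family identity, this divergence is $N$ times a Bregman divergence of the convex function $F_G$. Three ingredients then give $|F_G'(\kappa)-F_G'(\beta)|=O(e^{-\alpha t/2})$: the bound $F_G''\le P$ on $[0,2]$ (Poincar\'e plus Lemma~\ref{lem:secondmoment}, which is why that lemma is stated up to $\beta=2$), Lemma~\ref{lem:bregman}, and the observation $\kappa\ge 0$.

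\textbf{Your Donsker--Varadhan branch.} This is the Legendre-dual form of the same mechanism, since $\log\langle e^{sH}\rangle_\beta=N(F_G(\beta-s)-F_G(\beta))$. Your sub-exponential target is therefore exactly a second-derivative bound on $F_G$ near $\beta$. When $s>\beta$ it reaches slightly negative arguments, which the paper avoids by proving $\kappa\ge0$. Your suggested route through exponential moments of $\|\bphi\|^2$ is awkward, because $H$ does not control $\|\bphi\|^2$ when $\BB$ is degenerate. Use instead $\|\BB\bphi\|^2\le C\langle\bphi,\BB\bphi\rangle=2CH(\bphi)$. With this, the Herbst argument and the log-Sobolev inequality for $\mu_\beta$ give $\log\langle e^{s(H-\langle H\rangle_\beta)}\rangle_\beta\le (C/\alpha)s^2\langle H\rangle_\beta/(1-C|s|/\alpha)$ for $|s|<\alpha/C$, which is $O(s^2N)$.

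\textbf{Your coupling branch.} This bypasses all of the above. Strong monotonicity of $\grad(\beta H+V_\hh)$ gives the pathwise contraction. The only other inputs are second-moment bounds under $\mu_0$ and $\mu_\beta$ for $\beta\in[0,1]$. You do not even need the It\^o computation, since $\|\bphin^{(t)}\|\le\|\tilde{\bphin}^{(t)}\|+\|\bphin^{(0)}-\tilde{\bphin}^{(0)}\|$.

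\textbf{What each route buys.} The coupling gives the better rate $e^{-\alpha t}$ and applies to any observable with linearly growing gradient. The paper's argument is tied to $H$ being the sufficient statistic of the family $(\mu_\kappa)$. In exchange, the paper's route, like your Donsker--Varadhan branch, uses only entropy contraction and functional inequalities. It would therefore survive for dynamics that contract in KL but admit no contractive coupling.

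\textbf{A small slip in Step~1.} The correct identity is $D(\mu_0\|\mu_\beta)=\beta\langle H\rangle_0+N(F_G(\beta)-F_G(0))$. Since $F_G(\beta)\le F_G(0)$, this gives $D(\mu_0\|\mu_\beta)\le\beta\langle H\rangle_0$ at once.
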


\begin{prop}\label{prop:locenergy}
    For each $\beta\in[0,1]$ and $t\ge0$, $\frac1N\langle
    H(\bphin)\rangle_\beta^{(t)}$ is a local function of
    $G=(\BB,\hh)$. Furthermore, $\beta\mapsto \frac1N\langle
    H(\bphin)\rangle_\beta^{(t)}$ is a norm-continuous function from
    $[0,1]$ to $\cL(\cG_{[0,C],C})$.
\end{prop}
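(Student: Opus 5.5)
We must prove Proposition~\ref{prop:locenergy}. For fixed $t$, $\beta$ and $G$, the plan has three layers of approximation: time discretization, polynomial approximation of the drift, and expansion into homomorphism densities.

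\emph{Time discretization.} Replace the Langevin SDE \eqref{eqn:langevin}, started from $\bphin^{(0)}\sim\mu_0=\bigotimes_i e^{-V_{h_i}}/Z(h_i)$, by its Euler--Maruyama scheme with step $\eta=t/K$:
\begin{align*}
\bphin_{k+1}=\bphin_k-\eta\beta\BB\bphin_k-\eta\nabla V_\hh(\bphin_k)+\sqrt{2\eta}\,\bg_k ,
\end{align*}
with i.i.d.\ standard Gaussians $\bg_k$. The drift is Lipschitz with constant at most $C+\alpha^{-1}$, so a synchronous coupling plus Gr\"onwall gives
\begin{align*}
\frac1N\E\|\bphin_K-\bphin^{(t)}\|^2\le c(t)\,\eta ,
\end{align*}
uniformly in $N$, $G$ and $\beta\in[0,1]$. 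The per-coordinate second moments are also uniformly bounded, using $\|\BB\|\le C$ and the strong convexity of $V$. Since $H$ is a quadratic form with $\|\BB\|\le C$, we have $\frac1N|\E H(\bphin_K)-\E H(\bphin^{(t)})|\le C\,(\frac1N\E\|\bphin_K-\bphin^{(t)}\|^2)^{1/2}(\text{bounded})$. This error is uniformly small.

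\emph{Polynomial approximation of the drift.} Next replace $V_h'(\varphi)$ by a polynomial $q(h,\varphi)$ in two variables that approximates it uniformly on $[-C,C]\times[-R,R]$. Outside this box we need a tail bound, and here (SG) and (BF) enter. I would show by induction on $k$ that each coordinate $\varphi_{k,i}$ is sub-Gaussian with norm bounded independently of $N$ and $i$. Each step is a deterministic map with $\ell_\infty\to\ell_\infty$ Lipschitz constant at most $1+\eta(C+\alpha^{-1})$, because $\|\BB\|_{\infty\to\infty}\le C$, $V_h'$ is $\alpha^{-1}$-Lipschitz, and $|V_h'(0)|$ is bounded for $|h|\le C$. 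That map is then followed by independent Gaussian noise. The initial law $\mu_0$ is a product of strongly log-concave measures, hence sub-Gaussian. Bounding moments coordinatewise (for example $\E|\varphi_{k+1,i}|^p$ via Minkowski over the row sum of $\BB$) gives uniform sub-Gaussian tails. The same bound holds for the polynomial-drift chain as long as we only need finitely many steps, provided the approximation is taken on a large box and we couple the two chains, truncating on the rare event of leaving the box. The resulting error in $\frac1N\E H$ is $o_R(1)$ uniformly. I expect this step to be the main obstacle: the polynomial drift is not globally Lipschitz, so the coupling must be done carefully, for instance by clipping $\varphi$ at $\pm R$ before applying a polynomial approximation of the clipped function, and by controlling the coupling error with Cauchy--Schwarz and the tail bounds.

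\emph{Homomorphism expansion and continuity in $\beta$.} With polynomial drift, $\bphin_K$ is coordinatewise a polynomial in $\beta$, the entries of $\BB$, the $h_i$, the initial $\varphi_{0,j}$ and the Gaussians. Hence
\begin{align*}
\frac1N\E H(\bphin_K)=\frac1{2N}\sum_{i,j}B_{ij}\,\E[\varphi_{K,i}\varphi_{K,j}]
\end{align*}
expands as a sum over walks and trees in $\BB$. In this sum, expectations of products of independent $\varphi_{0,j}$ (moments of $\mu_0$, each a continuous function of $h_j$) and of Gaussian moments factor over distinct vertex indices. Grouping index coincidences, exactly as in Lemma~\ref{lem:polyhom} with injective densities, expresses it as a finite linear combination of terms $t_\inj(X,(\BB,\hh_1,\dots,\hh_k))$. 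The extra vertex fields here are continuous functions of $h$, namely moments of $\mu_0$, which can be approximated by polynomials in $h$ via Stone--Weierstrass. The coefficients are polynomials in $\beta$, and all errors are uniform in $\beta\in[0,1]$. Consequently $\beta\mapsto\frac1N\langle H(\bphin)\rangle_\beta^{(t)}$ is a uniform limit of polynomials in $\beta$ with coefficients in $\cL(\cG_{[0,C],C})$. This gives both locality and norm-continuity.
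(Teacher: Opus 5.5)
Your first and third layers agree with the paper: Euler--Maruyama via Proposition~\ref{prop:discretize}, then expansion into injective homomorphism densities with the moments of $\mu_0$ replaced by polynomials in $h$. The middle layer, which you yourself flag as the main obstacle, has a genuine gap, and the fixes you sketch do not work, for four reasons.
\begin{enumerate}
\item The polynomial-drift chain does not inherit uniform sub-Gaussian tails. After one step with a drift of degree $d\ge 2$, a coordinate is a degree-$d$ polynomial in sub-Gaussian variables, which is in general not sub-Gaussian. You only get bounds on moments of each fixed order, and these depend on the degrees.
\item Clipping $\varphi$ at $\pm R$ before applying the polynomial makes the drift non-polynomial. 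The final state is then no longer a polynomial in the initial condition and the noise, so your third layer collapses. Nor can a polynomial approximate the clipped function uniformly on $\R$.
\item The event of leaving the box is not rare. With $N\to\infty$ coordinates, some coordinate leaves $[-R,R]$ with probability tending to one. Any truncation must therefore be coordinatewise, and coordinatewise errors then spread to neighbours through $\BB$.
\item Uniform approximation on $[-C,C]\times[-R,R]$ says nothing about the polynomial outside the box. So $\E\big[|V_h'(\varphi)-q(h,\varphi)|^p\mathbf{1}_{|\varphi|>R}\big]$ need not be $o_R(1)$. You need the growth of $q$ controlled against the sub-Gaussian tail, which is what the weighted approximation of Proposition~\ref{prop:wtapprox} and Lemma~\ref{lem:subgapprox} provides.
\end{enumerate}

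The paper avoids any tail control of the polynomial chain by a backward telescoping argument (Lemmas~\ref{lem:backprop} and~\ref{lem:afterbackprop}). It sets $H_k(\xx)=\E[H(\bpsi^{(K)})\mid\bpsi^{(k)}=\xx]$; by \ref{cond:SG} these are polynomials of bounded degree with $\|H_k\|_1=O(N)$. It then writes
\[
\E H(\btau^{(K)})-\E H(\bpsi^{(K)})=\sum_{k<K}\big(\E H_{k+1}(\btau^{(k+1)})-\E H_k(\btau^{(k)})\big).
\]
Each summand compares one exact step with one polynomial step, \emph{both started from the true chain} $\btau^{(k)}$. The coordinates of $\btau^{(k)}$ have uniformly bounded sub-Gaussian norm by Lemma~\ref{lem:psi2inftybound}. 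The polynomial used at step $k$ is chosen after $H_{k+1}$ is known, i.e.\ backwards in time.

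Your forward coupling can also be repaired. Propagate $\max_i\|\psi_{k,i}-\varphi_{k,i}\|_{L^p}$ using Minkowski over the rows of $\BB$, together with $|P(h,\psi)-P(h,\varphi)|\le D|\psi-\varphi|(1+|\varphi|+|\psi-\varphi|)^{d-1}$ and H\"older. Again the polynomials must be chosen backwards, so that the moment order demanded at step $k$ accounts for the degrees used at later steps. Some such argument is the heart of the proof, and the proposal does not contain it.
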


\begin{proof}[Proof of Theorem~\ref{thm:contloc}]
    Recall the interpolation scheme
    \begin{align*}
F_V(G) = F_G(1) = F_G(0) + \int_0^1 F_G'(\beta)\de\beta =
F_V(\bzero,\hh) - \int_0^1 \frac1N\langle H(\bphin)\rangle_\beta
\de\beta.
\end{align*}
    By Proposition~\ref{prop:FV0h},
    $F_V(\bzero,\hh)\in\cL(\cG_{[0,C],C})$. By
    Proposition~\ref{prop:locenergy}, for all $t>0$ the function
    \begin{align*}
\int_0^1\frac1N\langle H(\bphin)\rangle_\beta^{(t)} \de\beta
\end{align*}
is in $\cL(\cG_{[0,C],C})$. By Proposition~\ref{prop:unifconv}, we
have the convergence
    \begin{align*}
\int_0^1 \frac1N \langle H(\bphin)\rangle_\beta^{(t)} \de\beta
\quad\stackrel{t\rightarrow\infty} \longrightarrow\quad \int_0^1
\frac1N \langle H(\bphin)\rangle_\beta \de\beta
\end{align*}
    in $\cL(\cG_{[0,C],C})$. Combining these gives that
    $F_V(G)\in\cL(\cG_{[0,C],C})$, as desired.
\end{proof}

We prove these propositions in the following two subsections. In the intermediate results that
follow, all constants are uniform over $G=(\BB,\hh)\in\cG_{[0,C],C}$
and depend only on $C$ and the $\alpha$-good family $V$ of potentials.
In particular, there is no dependence on the number of vertices $N$ of
$G$.

\subsection{Uniform convergence of the energy: Proof of Proposition~\ref{prop:unifconv}}
\label{sec:UnifConv}

We first show a bound on the second moment of $\mu_\beta$ that, among
other things, will allow us to deduce Lemma~\ref{lem:FVlip}. For
technical reasons later, we will prove this bound for all $\beta$ in
$[0,2]$ rather than $[0,1]$. (Let us emphasize that this lemma does not require
a bound on $\|\BB\|_{\infty\rightarrow\infty}$.)
\begin{lem}\label{lem:secondmoment}
    Assume $\BB$ has spectrum contained in $[0,C]$ and $\|\hh\|_\infty \le C$,
    and that $V$ is an $\alpha$-good  potential.
    There is a positive constant $K=K(C,\alpha,V)$ such that
    $\langle\|\bphin\|^2\rangle_\beta\le KN$ for all $\beta\in[0,2]$.
\end{lem}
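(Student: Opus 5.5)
The density of $\mu_\beta$ is proportional to $e^{-U(\bphi)}$ with $U(\bphi)=\frac\beta2\<\bphi,\BB\bphi\>+V_\hh(\bphi)$. Since $\BB\succeq\bzero$ and $\beta\ge0$, the function $U$ is $\alpha$-strongly convex. The plan is to control $\langle\|\bphin\|^2\rangle_\beta$ by combining two facts: the distribution concentrates around its mode $\bphi^*$, and $\|\bphi^*\|$ is of order $\sqrt N$.

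\textbf{Step 1: the mode is $O(\sqrt N)$.} Let $\bphi^*$ be the unique minimizer of $U$, so $\grad U(\bphi^*)=\bzero$. Strong convexity gives
\[
\alpha\|\bphi^*\|^2\le\<\grad U(\bphi^*)-\grad U(\bzero),\bphi^*-\bzero\>=-\<\grad U(\bzero),\bphi^*\>,
\]
and therefore $\|\bphi^*\|\le\alpha^{-1}\|\grad U(\bzero)\|$. The quadratic part has zero gradient at the origin, so $\grad U(\bzero)=(V_{h_i}'(0))_{i\le N}$. By the $\alpha$-good assumption, $h\mapsto V_h'(0)$ is Lipschitz on $[-C,C]$, hence bounded there by some $M=M(C,V)$. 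This gives $\|\bphi^*\|^2\le M^2N/\alpha^2$.

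\textbf{Step 2: fluctuations around the mode.} For an $\alpha$-strongly log-concave measure, the Brascamp--Lieb inequality (or equivalently Poincaré with constant $1/\alpha$, from Bakry--Émery, which the paper already uses) gives $\Var_\beta(\varphi_i)\le1/\alpha$ for each coordinate. Summing over $i$, $\langle\|\bphin-\langle\bphin\rangle_\beta\|^2\rangle_\beta\le N/\alpha$.

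It remains to show that the mean $\langle\bphin\rangle_\beta$ lies within $O(\sqrt N)$ of $\bphi^*$. One option is to integrate by parts: $\langle\grad U(\bphin)\rangle_\beta=\bzero$. Then strong monotonicity gives
\[
\alpha\langle\|\bphin-\bphi^*\|^2\rangle_\beta\le\langle\<\grad U(\bphin)-\grad U(\bphi^*),\bphin-\bphi^*\>\rangle_\beta .
\]
A cleaner route is to take the function $\frac12\|\bphi-\bphi^*\|^2$ and use $\langle\Delta f-\<\grad U,\grad f\>\rangle_\beta=0$, which yields
\[
\langle\<\grad U(\bphin),\bphin-\bphi^*\>\rangle_\beta=N .
\]
Since $\grad U(\bphi^*)=\bzero$, strong convexity gives $\<\grad U(\bphi),\bphi-\bphi^*\>\ge\alpha\|\bphi-\bphi^*\|^2$, and hence $\langle\|\bphin-\bphi^*\|^2\rangle_\beta\le N/\alpha$. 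This single identity already supplies the fluctuation bound without Brascamp--Lieb.

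\textbf{Step 3: conclude.} Combining the two steps,
\[
\langle\|\bphin\|^2\rangle_\beta\le2\|\bphi^*\|^2+2\langle\|\bphin-\bphi^*\|^2\rangle_\beta\le(2M^2/\alpha^2+2/\alpha)N=:KN .
\]
This holds for every $\beta\ge0$, in particular on $[0,2]$, and uses neither $\|\BB\|_{\infty\to\infty}$ nor the upper spectral bound $C$ on $\BB$.

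\textbf{Main obstacle.} The only delicate point is justifying the integration by parts, i.e.\ that the boundary terms vanish. This follows because $e^{-U}$ decays like a Gaussian while $\grad U$ grows at most linearly, since $V_h''\le\alpha^{-1}$ and $\BB$ is bounded.
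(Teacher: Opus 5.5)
Your proof is correct, and it takes a genuinely different route from the paper's.

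The paper argues by thermodynamic comparison. It moves $\frac{\alpha\beta}{8}\|\bphi\|^2$ out of $V_\hh$ and into the Hamiltonian, so that the new quadratic form $\tilde H$ has spectrum in $[\tilde C^{-1},\tilde C]$ and satisfies $\|\bphi\|^2\le 2\tilde C\,\tilde H(\bphi)$. It then uses that the mean of $\tilde H$ is nonincreasing in the inverse temperature (its derivative is minus a variance) to compare with the product measure at $\beta=0$: $\langle\|\bphin\|^2\rangle_\beta\le 2\tilde C\langle\tilde H\rangle_\beta\le 2\tilde C\langle\tilde H\rangle_0$, which is a sum of single-site second moments. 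In that argument the upper spectral bound on $\BB$ enters through $\tilde C$, and $\beta\le 2$ is what keeps $V_h-\frac{\alpha\beta}{8}\varphi^2$ strongly convex.

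You instead locate the mode, $\|\bphi^*\|\le\alpha^{-1}\|\nabla U(\bzero)\|$. You then combine the generator identity $\langle \nabla U(\bphin)\cdot(\bphin-\bphi^*)\rangle_\beta=N$ with strong monotonicity of $\nabla U$ to get $\langle\|\bphin-\bphi^*\|^2\rangle_\beta\le N/\alpha$. This handles exactly the difficulty the paper's remark points to: Poincar\'e controls fluctuations but not a possibly large mean. Your argument is also shorter and gives the explicit constant $K=2M^2/\alpha^2+2/\alpha$. It holds for every $\beta\ge0$ and every positive semidefinite $\BB$, with no upper spectral bound.

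What the paper's route buys in exchange is that its core inequality needs no pointwise information on $V_h'$ and no integration by parts. It uses only convexity of the single-site potentials and bounded second moments under $e^{-V_i}$.

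Two small points on your write-up. The Brascamp--Lieb paragraph and the first ``option'' in Step 2 are unnecessary once you have the identity. For the boundary terms you do not need growth control at all. Integrate over balls centered at $\bphi^*$: the boundary flux there equals $R\int_{\partial B_R}e^{-U}\ge0$, and the integrand $\nabla U(\bphi)\cdot(\bphi-\bphi^*)$ is nonnegative. Monotone convergence then yields $\langle\nabla U(\bphin)\cdot(\bphin-\bphi^*)\rangle_\beta\le N$, which is all you use.
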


\begin{proof}
    Let $\tilde\BB$ be a symmetric matrix with spectrum contained in
    $[\tilde C^{-1},\tilde C]$ for some $\tilde C>0$, and let $\tilde
    H(\bphi)=\<\bphi, \tilde\BB\bphi\>/2$. For $M>0$, let $\cV_M$ denote
    the set of convex functions $V:\R\rightarrow\R$ such that
    \begin{align*}
\frac{\int_\R \varphi^2 \exp(-V(\varphi)) \de\varphi}{\int_\R
\exp(-V(\varphi)) \de\varphi} \le M.
\end{align*}
    Given $V_1,\ldots,V_N\in\cV_M$, let $\tilde
    V(\bphi)=\sum_{i=1}^NV_i(\varphi_i)$. First, note that
    \begin{align*}
        \frac{\int_{\R^N} \|\bphi\|^2 \exp(-\beta\tilde
        H(\bphi)-\tilde V(\bphi))\de\bphi} {\int_{\R^N}
        \exp(-\beta\tilde H(\bphi)-\tilde V(\bphi))\de\bphi} &\le
        \frac{2\tilde C\int_{\R^N} \tilde H(\bphi) \exp(-\beta\tilde
        H(\bphi)-\tilde V(\bphi))\de\bphi} {\int_{\R^N}
        \exp(-\beta\tilde H(\bphi)-\tilde V(\bphi))\de\bphi} \\ &\le
        \frac{2\tilde C\int_{\R^N} \tilde H(\bphi) \exp(-\tilde
        V(\bphi))\de\bphi} {\int_{\R^N} \exp(-\tilde V(\bphi))\de\bphi} \\
        &\le \frac{\tilde C^2 \int_{\R^N} \|\bphi\|^2 \exp(-\tilde
        V(\bphi))\de\bphi} {\int_{\R^N} \exp(-\tilde V(\bphi))\de\bphi} \\
        &= \tilde C^2 \sum_{i=1}^N \frac{\int_\R \varphi^2
        \exp(-V_i(\varphi)) \de\varphi}{\int_\R \exp(-V_i(\varphi))
        \de\varphi} \\ &\le \tilde C^2MN.
    \end{align*}
    for all $\beta\ge0$. Here the second inequality follows from the
    fact that
    \begin{align*}
-\frac{\de}{\de\beta} \left( \frac{\int_{\R^N} \tilde H(\bphi)
\exp(-\beta\tilde H(\bphi)-\tilde V(\bphi))\de\bphi} {\int_{\R^N}
\exp(-\beta\tilde H(\bphi)-\tilde V(\bphi))\de\bphi} \right) =\Var_{\tilde\mu}(\tilde H(\bphin))\ge 0\, ,
\end{align*}
where $\tilde\mu$ is the probability measure on $\R^N$ with density proportional to
$\exp(-\beta\tilde H(\bphi)-\tilde V(\bphi))$.

    Returning to bounding $\langle\|\bphin\|^2\rangle_\beta$, from
    the above it follows that
    \begin{align*}
        \langle\|\bphin\|^2\rangle_\beta &= \frac{\int_{\R^N}
        \|\bphi\|^2 \exp(-\beta H(\bphi) - V_\hh(\bphi))\de\bphi}
        {\int_{\R^N} \exp(-\beta H(\bphi)- V_\hh(\bphi))\de\bphi} \\ &=
        \frac{\int_{\R^N} \|\bphi\|^2 \exp(-\beta (H(\bphi) +
        \frac\alpha8\|\bphi\|^2) -
        (V_\hh(\bphi)-\frac{\alpha\beta}8\|\bphi\| ^2))\de\bphi}
        {\int_{\R^N} \exp(-\beta (H(\bphi) + \frac\alpha8\|\bphi\|^2)
        - (V_\hh(\bphi)-\frac{\alpha\beta}8\|\bphi\| ^2))\de\bphi} \\
        &\le \tilde C^2MN
    \end{align*}
    where $\tilde C = \max(\frac C2+\frac \alpha4,\frac 4\alpha)$ and
    $M$ is such that $V_h(\varphi)-\frac{\alpha\beta}8\varphi^2$ is in
    $\cV_M$ for all $h\in[-C,C]$ and $\beta\in[0,2]$. It is clear that
    such an $M$ exists and only depends on $V$ since
    $V_h(\varphi)-\frac{\alpha\beta}8\varphi^2$ is
    $\frac\alpha2$-strongly convex and $\frac{\int_\R \varphi^2
    \exp(-(V_h(\varphi)-\frac{\alpha\beta} 8\varphi^2))
    \de\varphi}{\int_\R \exp(-(V_h(\varphi)-\frac{\alpha\beta}
    8\varphi^2)) \de\varphi}<\infty$ is continuous in $h$ and $\beta$.
    Taking $K=\tilde C^2M$, this concludes the proof.
\end{proof}

\begin{rem}
We remark that the bound 
\begin{align}
\<\|\bphin-\<\bphin\>_{\beta}\|^2\>_\beta\le \frac{N}{\alpha}
\end{align}
holds by a consequence of the Poincar\'e (or Brascamp-Lieb) 
inequality for $\mu_\beta$.
However, the potentially large contribution to the average squared
magnitude from a nonzero mean of $\mu_\beta$ implies that a more
complicated argument is needed to prove Lemma~\ref{lem:secondmoment}. 
Clearly the same proof works for $\beta$
restricted to any compact interval of nonnegative reals.
\end{rem}

\begin{proof}[Proof of Lemma~\ref{lem:FVlip}]
    Let $\langle\,\cdot\,\rangle =\<\,\cdot\,\>_{\beta=1}$ 
    be expectation with respect to the
    probability measure on $\R^N$ with density proportional to
    $\exp(-\frac12\<\bphi, \BB\bphi\>-V_\hh(\bphi))\de\bphi$. By
    standard statistical physics identities, we may compute
    \begin{align*}
        \nabla_{\BB} F_V(\BB,\hh) = -\frac1{2N}\<\bphin\bphin^{\sT}\>\preceq
        \bzero\, .
    \end{align*}
    Taking magnitudes gives the bound
    \begin{align*}
    \big\|\<\AA,\nabla_{\BB} F_V(\BB,\hh)\>\big| 
    \le \|\AA\| \big|\Tr(\nabla_{\BB} F_V(\BB,\hh))\big|\\
    \le \|\AA\| \frac1{2N} \<\|\bphin\|^2\>\le L\|\AA\|\, .
    \end{align*}
    where $L=K/2$ for $K$ the constant from
    Lemma~\ref{lem:secondmoment}. Since the set of positive
    semidefinite $\BB$ with $\|\BB\|\le C$ is convex, integrating this
    bound along the line segment from $\BB_1$ to $\BB_2$ gives the
    desired result.
\end{proof}

We will now prove Proposition~\ref{prop:unifconv}. We will show 
that the contraction \eqref{eqn:KLcontract} in time of the KL
divergence allows us to write $\frac1N\langle
H(\bphin)\rangle_\beta^{(t)}=\frac1N\langle H(\bphin)\rangle_\kappa$
for some $\kappa$ that converges to $\beta$ as $t\rightarrow\infty$.
It is essential here that the observable $H(\bphin)$ is the
Hamiltonian, which allows us to use the theory of exponential families
and I-projections to control the difference between $\frac1N\langle
H(\bphin)\rangle_\beta^{(t)}$ and $\frac1N\langle
H(\bphin)\rangle_\beta$ by certain Bregman divergences associated
with $F_G$.

We will make use of the following basic property of convex functions
$f$. Its proof is given in
Appendix~\ref{subapp:bregman}.

\begin{lem}\label{lem:bregman}
    Let $f:\R\rightarrow\R$ be a twice-differentiable convex function and let $U$ and $W$
    be positive reals with $U\le W$. Suppose that $a$ and $b$ are
    reals such that $|f'(a)-f'(b)|>U$ and $f''(x)\le W$ for all
     $x$ lying between $a$ and $b$ with $|x-a|\le1$. Then it is the case
    that $f(a)-f(b)-(a-b)f'(b)>\frac{U^2}{2W}$.
\end{lem}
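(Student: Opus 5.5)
We argue by reducing to the case $a>b$ and then lower bounding the Bregman divergence by an integral of $f'$ over a short interval ending at $a$, where the curvature bound controls how fast $f'$ can fall.

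\emph{Reduction.} First, $a\ne b$, since otherwise $f'(a)=f'(b)$. If $a<b$, we replace $f$ by $g(x)=f(-x)$ and $(a,b)$ by $(-a,-b)$. Then $g$ is convex and twice differentiable, and $|g'(-a)-g'(-b)|=|f'(a)-f'(b)|$. Also $g''(y)=f''(-y)$, so the hypothesis on $f''$ transfers to points between $-a$ and $-b$ within distance $1$ of $-a$. The Bregman quantity is preserved:
\begin{align*}
g(-a)-g(-b)-(-a+b)g'(-b) = f(a)-f(b)-(a-b)f'(b).
\end{align*}
Hence we may assume $a>b$. Since $f'$ is nondecreasing by convexity, $\Delta:=f'(a)-f'(b)>U$.

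\emph{Integral representation.} Write
\begin{align*}
f(a)-f(b)-(a-b)f'(b)=\int_b^a \bigl(f'(x)-f'(b)\bigr)\,\de x .
\end{align*}
The integrand is nonnegative on $[b,a]$ by monotonicity of $f'$. Set $s=U/W\le 1$.

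\emph{Case $a-b<s$.} Every point of $[b,a]$ lies within distance $s\le1$ of $a$, so $f''\le W$ on all of $[b,a]$. Hence $\Delta\le W(a-b)<Ws=U$, contradicting $\Delta>U$. So this case cannot occur.

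\emph{Case $a-b\ge s$.} For $x\in[a-s,a]$ the curvature hypothesis applies on $[x,a]$, so $f'(a)-f'(x)\le W(a-x)$. This gives
\begin{align*}
f'(x)-f'(b)\ge \Delta - W(a-x).
\end{align*}
Dropping the nonnegative contribution from $[b,a-s]$, we get
\begin{align*}
\int_b^a \bigl(f'(x)-f'(b)\bigr)\,\de x \;\ge\; \int_{a-s}^a \bigl(\Delta-W(a-x)\bigr)\,\de x \;=\; \Delta s-\frac{Ws^2}{2} \;>\; Us-\frac{Ws^2}{2} \;=\; \frac{U^2}{2W}.
\end{align*}
The middle inequality is strict because $\Delta>U$ and $s>0$. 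This proves $f(a)-f(b)-(a-b)f'(b)>U^2/(2W)$.

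The only delicate point is that the curvature bound is available only within distance $1$ of $a$. The choice $s=U/W\le1$, which uses the assumption $U\le W$, keeps the integration window inside that region. The same bound is what rules out the short-interval case.
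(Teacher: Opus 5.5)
Your proof is correct and follows essentially the same route as the paper's: the paper additionally subtracts an affine function to normalize $b=0$ and $f(b)=f'(b)=0$, which you handle instead by keeping the integrand $f'(x)-f'(b)$. Both arguments rule out $a-b\le U/W$ via the curvature bound, then integrate the linear lower bound $f'(x)\ge f'(a)-W(a-x)$ over $[a-U/W,a]$.
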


\begin{proof}[Proof of Proposition~%
\ref{prop:unifconv}] 
Recall that $\beta\mapsto F_G(\beta)$ is nonincreasing and convex 
since
    \begin{equation}\label{eqn:F'mean}
        F_G'(\beta) = -\frac1N \langle H(\bphin)\rangle_\beta \le 0\, .
    \end{equation}
    and
    \begin{equation}\label{eqn:F''var}
        F_G''(\beta) = \frac1N \Var_\beta(H(\bphin)) =
        \frac1N(\langle H(\bphin)^2\rangle_\beta - \langle H(\bphin)
        \rangle_\beta^2) \ge 0.
    \end{equation}
For any $\beta_1,\beta_2\in\R$ such
that 
$\int_{\R^N}\exp(-\beta_iH(\bphi)-V_\hh(\bphi) )\de\bphi<\infty$, we
have by \eqref{eqn:F'mean} that
    \begin{align}
        D(\mu_{\beta_1}\|\mu_{\beta_2}) &= \left\langle \log\frac
        {\exp(-\beta_1 H(\bphin) - V_\hh(\bphin) - NF_G(\beta_1))}
        {\exp(-\beta_2 H(\bphin) - V_\hh(\bphin) - NF_G(\beta_2))}
        \right\rangle_{\beta_1}
        \label{eqn:KLidentity} \\
        &= NF_G(\beta_2) - NF_G(\beta_1) + (\beta_2-\beta_1) \langle
        H(\bphin)\rangle_{\beta_1} \nonumber \\ &= N(F_G(\beta_2) -
        F_G(\beta_1) - (\beta_2-\beta_1)F_G'(\beta_1)). \nonumber
    \end{align}
    In particular, for all $\beta\in[0,1]$, we have
    \begin{align}
\frac1N D(\mu_0\|\mu_\beta) = F_G(\beta) - F_G(0) - \beta F_G'(0)\le -
F_G'(0)=\frac1N\langle H(\bphin)\rangle_0\le \frac CN\langle
\|\bphin\|^2\rangle_0\le L(C,\alpha,V)\label{eq:LdefBd}
\end{align}
    for all $\beta\in[0,1]$, where $L=CK$ for $K$ the constant from
    Lemma~\ref{lem:secondmoment}.

    Let $S_{\beta,t}$ be the set of probability measures
\begin{align*}
S_{\beta,t} := \Big\{\mu\in\cuP(\R^N): 
\int_{\R^N}H(\bphi)\mu(\de\bphi) = \langle H(\bphin)\rangle_\beta^{(t)}\Big\}\, .
\end{align*}
This is a linear
    family with statistic $H(\bphin)$. Hence the I-projection
\begin{align}\label{eqn:Iprojection}
\hat{\mu}_{\beta,t} = \arg\min_{\mu\in S_{\beta,t}} D(\mu\|\mu_\beta)\, 
\end{align}
exists and is uniquely given by $\hat{\mu}_{\beta,t} = \mu_{\kappa}$
for some $\kappa=\kappa(t,\beta)\in\R$.
In particular,  $\langle H(\bphin)\rangle_\kappa=\langle
    H(\bphin)\rangle_\beta^{(t)}$ and $D(\mu_\kappa\|\mu_\beta)\le
    D(\mu^{(t)}_\beta\|\mu_\beta)$.
    
We next note that $\kappa\ge0$. Indeed,
    the log-Sobolev inequality, in particular Eq.~\eqref{eqn:KLcontract},
    tells us that
    \begin{align*}
D(\mu_\kappa\|\mu_\beta) \le D(\mu_\beta^{(t)}\|\mu_\beta) \le
e^{-\alpha t} D(\mu_0\|\mu_\beta) \le D(\mu_0\|\mu_\beta)
\end{align*}
    while the identity \eqref{eqn:KLidentity} and the convexity of
    $F_G$ imply that $D(\mu_\kappa\|\mu_\beta)$ is decreasing on
    $\kappa\le\beta$, whenever $\mu_\kappa$ is well-defined.

    Combining these bounds gives
    \begin{align}
        F_G(\beta)-F_G(\kappa)-(\beta-\kappa)
        F_G'(\kappa)&=\frac1ND(\mu_\kappa\| \mu_\beta)\le
        \frac1ND(\mu^{(t)}_\beta\|\mu_\beta) \nonumber \\
        &\le\frac1Ne^{-\alpha
        t}D(\mu_0\|\mu_\beta)\le e^{-\alpha t}L.\label{eqn:bregmanbound}
    \end{align}

    Let $M = e^{-\alpha t}L$ and $P = \alpha^{-1}C^2K$ where $K$ is
    the constant from Lemma~\ref{lem:secondmoment}. Note that
    $\mu_\gamma$ satisfies a Poincar\'e inequality with constant
    $\alpha$ for all $\gamma\ge0$,  by the Bakry-\'Emery
    theorem since $\grad^2(\gamma H+V_\hh)\succeq\alpha\II$.
     Hence, by Eq.~\eqref{eqn:F''var} and Lemma~\ref{lem:secondmoment} we have 
     that
    \begin{align}
        F_G''(\gamma) &= \frac1N \Var_\gamma(H(\bphin)) \le \frac{1}{N\alpha}
        \langle\|\grad H(\bphin)\|^2 \rangle_\gamma\nonumber \\ 
        &=
        \frac{1}{N\alpha} \langle\|\BB\bphin\|^2 \rangle_\gamma \le
        \frac{C^2}{N\alpha}  \langle\|\bphin\|^2\rangle_\gamma\le P
        \label{eqn:F''bound}
    \end{align}
    for all $\gamma\in[0,2]$.

    We claim that $|F_G'(\beta)-F_G'(\kappa)|\le\sqrt{2MP}$ for
    $\beta\in[0,1]$ if $M\le P/2$. Supposing otherwise, we may
    apply Lemma~\ref{lem:bregman} with
    $(f,a,b,U,W)=(F_G,\beta,\kappa,\sqrt{2MP},P)$ to contradict
    \eqref{eqn:bregmanbound}. Indeed, since $\kappa\ge0$ and
    $\beta\in[0,1]$, the bound on the second derivative
    $F_G''(\gamma)$ in the hypothesis of Lemma~\ref{lem:bregman} is
    only needed for $\gamma\in[0,2]$ and is provided by
    Eq.~\eqref{eqn:F''bound}.

    On the other hand, we have that $F_G'(\beta)=-\frac1N\langle
    H(\bphin)\rangle_\beta$ and $F_G'(\kappa)=-\frac1N\langle
    H(\bphin)\rangle_\kappa = -\frac1N\langle
H(\bphin)\rangle_\beta^{(t)}$ by Eq.~\eqref{eqn:F'mean} 
    since $\mu_{\kappa}=\hat{\mu}_{\beta,t}$ 
    is the I-projection of  Eq.~\eqref{eqn:Iprojection}.
     It follows that
    \begin{align*}
\left| \frac1N\langle H(\bphin)\rangle_\beta^{(t)} - \frac1N\langle
H(\bphin)\rangle_\beta \right| \le \sqrt{2MP}= \sqrt{2C^2KL\alpha^{-1}}
\cdot e^{-\alpha t/2}\, ,
\end{align*}
    for $\beta\in[0,1]$ if $e^{-\alpha t}\le C^2 K/(2\alpha L)$. 
    By Lemma \ref{lem:secondmoment} and Eq.~\eqref{eq:LdefBd}, 
    the factor $\sqrt{2C^2KL\alpha^{-1}}$ only depends on $C,\alpha,V$
    and not on the graph $G$, whence 
    \begin{align*}
\lim_{t\rightarrow\infty} \sup_{\beta\in[0,1], G\in\cG_{[0,C],C}}
\left| \frac1N\langle H(\bphin)\rangle_\beta^{(t)} - \frac1N\langle
H(\bphin)\rangle_\beta \right| = 0,
\end{align*}
    as desired.
\end{proof}

\begin{rem}
    As mentioned already, the results of this subsection do not assume the sparsity
    condition $\|\BB\|_{\infty\rightarrow\infty}\le C$ and 
    apply to all graphs $(\BB,\hh)$ with $\bzero\preceq \BB\preceq
    C\II$ and $\|\hh\|_\infty\le C$. However, the remainder of the
    proof uses sparsity in a crucial way.
\end{rem}

\subsection{Polynomial approximation of
discretized dynamics: Proof of Proposition~\ref{prop:locenergy}}\label{subs:discretize}

In order to prove Proposition~\ref{prop:locenergy}, we will write $
\langle H(\bphin)\rangle_\beta^{(t)}/N$ as the (uniform over
$[0,1]\times\cG_{[0,C],C}$) limit as $k\rightarrow\infty$ of
$Q_k(\beta,\BB,\hh)$ for some polynomial function $Q_k$ of $\beta$ and
the entries of $\BB$ and $\hh$. These functions $Q_k$ will have the
property that they can be rewritten as linear combinations of weighted
homomorphism densities $t(X,G)$ with coefficients that are polynomials
in $\beta$. As a consequence, we will show
that $\beta\mapsto Q_k(\beta,\BB,\hh)$ is
bounded and continuous (with respect to the uniform norm in 
$\cL(\cG_{[0,C],C})$),
and hence $\beta
\mapsto  \langle H(\bphin) \rangle_\beta^{(t)}/N$ is
bounded and continuous.

To construct the approximants $Q_k(\beta,\BB,\hh)$, 
we will discretize the Langevin SDE
\eqref{eqn:langevin}, which yields a discrete time recursion that we
further approximate using polynomials of $\beta,\BB,\hh$. 
We will use
the following fact about the classical Euler-Maruyama scheme. Its
proof is straightforward but we will need an estimate
that tracks the dependence on the dimension $N$, which we could not
locate elsewhere, so we give it in Appendix~\ref{subapp:discretize}.

\begin{prop}\label{prop:discretize}
    Given a strong solution $t\mapsto \xx^{(t)}\in\R^N$ to the SDE
    \begin{align*}
\de\xx^{(t)}=\aa(\xx^{(t)})\de t+\sigma\de\ww^{(t)}
\end{align*}
    on $\R^N$ for some function $\aa:\R^N\rightarrow\R^N$ and constant diffusion coefficient $\sigma$, along with a step
    size $\delta\in(0,1)$, consider the Euler-Maruyama scheme
    $\yy^{(0)},\yy^{(1)},\yy^{(2)},\ldots$ defined recursively by
    $\yy^{(0)}=\xx^{(0)}$ and
    \begin{align*}
\yy^{(n+1)}=\yy^{(n)}+\delta
\aa(\yy^{(n)})+\sigma(\ww^{(\delta(n+1))}-\ww^{(\delta n)}).
\end{align*}
    Suppose that, for some positive constants $K_1,K_2,K_3$, we have that
    $\E[\|\xx^{(0)}\|^2]\le K_1N$, $\| \aa(\bzero)\|^2\le K_2N$, and
    $\| \aa(\uu)-\aa(\vv)\|\le K_3\|\uu-\vv\|$ for all $\uu,\vv\in\R^N$,
    i.e. the initial condition is bounded in $L^2$ and the drift
    coefficient is Lipschitz and bounded at the origin. Then there
    exists a constant $L$ depending only on $K_1,K_2,K_3$ such that
    for all $\delta>0$ and $T\in\delta\N$, we have that
    \begin{align*}
\E[\|\xx^{(T)}-\yy^{(T/\delta)}\|^2]\le Le^{LT}\delta N.
\end{align*}
\end{prop}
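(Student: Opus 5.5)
We prove Proposition~\ref{prop:discretize} by the standard strong-error argument for Euler--Maruyama, keeping track of $N$ at each step. Since the Brownian increments are shared, the natural comparison is the continuous interpolation of the scheme,
\begin{align*}
\bar\yy^{(s)}=\yy^{(n)}+(s-\delta n)\aa(\yy^{(n)})+\sigma(\ww^{(s)}-\ww^{(\delta n)}),\qquad s\in[\delta n,\delta(n+1)].
\end{align*}
In the difference $\ee^{(s)}:=\xx^{(s)}-\bar\yy^{(s)}$ the noise cancels exactly, because $\sigma$ is constant. Writing $\lfloor s\rfloor_\delta$ for $\delta\lfloor s/\delta\rfloor$, we get
\begin{align*}
\ee^{(s)}=\int_0^s\big(\aa(\xx^{(r)})-\aa(\yy^{(\lfloor r\rfloor_\delta/\delta)})\big)\de r .
\end{align*}

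\textbf{Step 1: a priori moment bound.} We first show $\sup_{s\le T}\E\|\xx^{(s)}\|^2\le L_1e^{L_1T}N$ for a constant $L_1$ depending only on $K_1,K_2,K_3$. The Lipschitz property gives $\|\aa(\uu)\|^2\le 2K_2N+2K_3^2\|\uu\|^2$. Applying It\^o's formula to $\|\xx^{(s)}\|^2$, the It\^o correction is $\sigma^2N$ and the martingale term has zero mean after a routine localization. Gr\"onwall's inequality then gives the bound. The same computation applied to the recursion for $\yy^{(n)}$, using $\delta<1$, gives the identical type of bound for $\E\|\yy^{(n)}\|^2$ with $\delta n\le T$.

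\textbf{Step 2: local increment bound.} For $r\in[\delta n,\delta(n+1)]$,
\begin{align*}
\xx^{(r)}-\xx^{(\delta n)}=\int_{\delta n}^r\aa(\xx^{(u)})\de u+\sigma(\ww^{(r)}-\ww^{(\delta n)}).
\end{align*}
By Cauchy--Schwarz and Step 1 the drift part contributes $O(\delta^2 e^{L_1T}N)$ in mean square. The noise part contributes $\sigma^2(r-\delta n)N\le\sigma^2\delta N$. Hence $\E\|\xx^{(r)}-\xx^{(\lfloor r\rfloor_\delta)}\|^2\le L_2e^{L_1T}\delta N$.

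\textbf{Step 3: Gr\"onwall on the error.} We split
\begin{align*}
\aa(\xx^{(r)})-\aa(\yy^{(n)})=\big[\aa(\xx^{(r)})-\aa(\xx^{(\delta n)})\big]+\big[\aa(\xx^{(\delta n)})-\aa(\yy^{(n)})\big].
\end{align*}
The first bracket is bounded in norm by $K_3\|\xx^{(r)}-\xx^{(\delta n)}\|$. The second is bounded by $K_3\|\ee^{(\delta n)}\|$, since $\bar\yy^{(\delta n)}=\yy^{(n)}$. Set $g(s)=\sup_{u\le s}\E\|\ee^{(u)}\|^2$. Cauchy--Schwarz in time gives
\begin{align*}
g(s)\le 2K_3^2\,s\int_0^s\big(L_2e^{L_1T}\delta N+g(r)\big)\de r .
\end{align*}
Gr\"onwall then yields $g(T)\le Le^{LT}\delta N$, after absorbing the factors of $T$ and $T^2$ into an enlarged constant in the exponent. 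Evaluating at $T\in\delta\N$, where $\bar\yy^{(T)}=\yy^{(T/\delta)}$, gives the claim.

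The \textbf{main obstacle} is the bookkeeping that makes every constant independent of $N$. Each bound must scale like $N$ and not like any other power of $N$; this works because the hypotheses are stated per coordinate, through $K_1N$ and $K_2N$, and because the Brownian increment has second moment exactly $\sigma^2\delta N$. The only other delicate point is justifying that the It\^o martingale term in Step 1 has zero expectation, which we handle by a stopping-time localization together with Fatou's lemma. The dependence on $\sigma$ is absorbed into $L$, since $\sigma=\sqrt2$ in our application.
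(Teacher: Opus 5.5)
\textbf{Step 3 has a genuine gap in the dependence on $T$.} Your structure matches the paper's: an a priori moment bound via It\^o and Gr\"onwall, an $O(\delta N)$ local increment bound, and the split of $\aa(\xx^{(r)})-\aa(\yy^{(n)})$ into $\aa(\xx^{(r)})-\aa(\xx^{(\delta n)})$ plus $\aa(\xx^{(\delta n)})-\aa(\yy^{(n)})$. The problem is how you close the error estimate. Cauchy--Schwarz in time, $\|\int_0^s f\|^2\le s\int_0^s\|f\|^2$, puts a factor $s$ into the Gr\"onwall coefficient. For $s\le T$ you get $g(s)\le A+2K_3^2T\int_0^s g(r)\,\de r$ with $A=2K_3^2T^2L_2e^{L_1T}\delta N$, and Gr\"onwall then gives
\begin{align*}
g(T)\le 2K_3^2T^2L_2\,e^{L_1T}\,e^{2K_3^2T^2}\,\delta N .
\end{align*}
This loss is built into the inequality, not into how you apply Gr\"onwall. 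The comparison equation $g(s)=2K_3^2s\int_0^s(c+g)$ really does grow like $e^{K_3^2s^2}$. The polynomial prefactor $T^2$ can be absorbed, but $e^{2K_3^2T^2}$ cannot be bounded by $Le^{LT}$ with $L$ independent of $T$. So your absorption remark fails, and the proposition as stated (for all $T\in\delta\N$) is not proved. For the paper's application, where $t$ is fixed, your weaker bound would still be enough, but it is not the claimed statement.

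\textbf{The fix is to work with $L^2(\Omega)$ norms rather than their squares.} Use Minkowski's integral inequality, $\E[\|\int_0^s f\|^2]^{1/2}\le\int_0^s\E[\|f\|^2]^{1/2}$. Set $u(s)=\sup_{r\le s}\E[\|\mathbf{e}^{(r)}\|^2]^{1/2}$, where $\mathbf{e}^{(r)}$ is your error process. Your own decomposition then gives
\begin{align*}
u(s)\le K_3\,s\,\big(L_2e^{L_1T}\delta N\big)^{1/2}+K_3\int_0^s u(r)\,\de r .
\end{align*}
Hence $u(T)\le K_3T(L_2e^{L_1T}\delta N)^{1/2}e^{K_3T}$. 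Squaring yields $K_3^2L_2T^2e^{(L_1+2K_3)T}\delta N$, and now only a polynomial factor needs absorbing.

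This is what the paper does in discrete form. It takes $L^2$ norms of the one-step error identity to get $D_{n+1}\le(1+\delta K_3)D_n+M\delta^{3/2}$ for $D_n=N^{-1/2}\E[\|\xx^{(\delta n)}-\yy^{(n)}\|^2]^{1/2}$. Iterating this produces only a factor $e^{K_3T}$. Separately, the moment bound for $\yy^{(n)}$ in your Step 1 is never used and can be dropped.
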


\begin{rem}
    One might wonder whether it is necessary to reduce to a continuous
    Gibbs measure as we have done in Section~\ref{subs:renormalize} in the
    first place. Indeed, one might try to work directly with Glauber
    dynamics for the Ising model using the log-Sobolev inequality
    established in \cite{AJKPV22} under \ref{cond:SW} instead of
    renormalizing and using Langevin dynamics. However,
    the direct analogue of Proposition~\ref{prop:discretize}
    fails in this approach.
\end{rem}

Returning to $ \langle H(\bphin)\rangle_\beta^{(t)}/N$, consider
the Euler-Maruyama iteration
$\btau^{(0)},\btau^{(1)},\ldots,\btau^{(t/\delta)} $ for the Langevin
SDE \eqref{eqn:langevin} given by Proposition~\ref{prop:discretize}.
Implicit from here on is that whenever a choice for $\delta$ is made,
$\delta$ is chosen so that $t/\delta$ is a positive integer. 
We will not comment on this point later.

Note that the drift $-(\beta\grad H+\grad V_\hh)$ and law
$\mu_0$ of the initial condition $\btau^{(0)}=\bphin^{(0)}$ 
in Eq.~\eqref{eqn:langevin}  satisfy
the hypotheses of the previous proposition with constants
$K_1,K_2,K_3$ uniform over $(\beta,G)\in[0,1]\times\cG_{[0,C],C}$
depending only on $C$ and $\alpha$ (as long as $V$ is $\alpha$-good). Applying
Proposition~\ref{prop:discretize}, we conclude that for all
$\eps>0$ there exists a choice of $\delta =\delta(t,\eps,C,V)$ so that 
\begin{align*}  
\E[
\|\bphin^{(t)} - \btau^{(t/\delta)}\|^2 ] \le N\eps\;\;\;\;\;\;
\forall (\beta,G)\in[0,1]\times\cG_{[0,C],C}\, .
\end{align*}

We then have that
\begin{align*}
    |\langle H(\bphin)\rangle_\beta^{(t)} -
    \E[H(\btau^{(t/\delta)})]| &= |\E[
    \<\bphin^{(t)}-\btau^{(t/\delta)}, \BB\bphin^{(t)}\> ] - \E[
    H(\bphin^{(t)}-\btau^{(t/\delta)}) ]| \\ &\le C\E[
    \|\bphin^{(t)} - \btau^{(t/\delta)}\|^2 ]^{1/2}
    (\langle\|\bphin\|^2\rangle^{(t)}_\beta) ^{1/2} + \frac C2 \E[
    \|\bphin^{(t)} - \btau^{(t/\delta)}\|^2 ] \\ &\le
    \left(C\eps^{1/2}K^{1/2}+\frac12 C\eps\right)N
\end{align*}
for all $(\beta,G)\in[0,1]\times\cG_{[0,C],C}$ where $K$ is given by
Lemma~\ref{lem:secondmoment}. We may take $\eps$, and hence
$C\eps^{1/2}K^{1/2}+\frac12 C\eps$, to be arbitrarily small,
uniformly over $[0,1]\times\cG_{[0,C],C}$, by choosing $\delta$
appropriately. Thus, it suffices to prove that $\beta\mapsto
\E[H(\btau^{(t/\delta)})]/N$ is a norm-continuous map from $[0,1]$ to
$\cL(\cG_{[0,C],C})$ for every $t,\delta>0$ so that $t/\delta\in\N$.

In order to prove that $\beta\mapsto\frac1N
\E[H(\btau^{(t/\delta)})]$  is continuous as a map 
$\R\to\cL(\cG_{[0,C],C})$, we write the discretized dynamics as
\begin{equation}\label{eqn:taurecursion}
    \btau^{(n+1)} = \btau^{(n)} - \beta\delta \BB \btau^{(n)}
    - \delta \grad V_\hh(\btau^{(n)})+\sqrt{2\delta}\bet^{(n)}
\end{equation}
where $\bet^{(0)},\bet^{(1)},\ldots$ are independent standard
Gaussians in $\R^N$. Note that the $i$th component of $\grad
V_\hh(\btau^{(n)})$ is equal to $V_{h_i}'(\tau^{(n)}_i)$. 
Since $V$ is $\alpha$-good, the map $(h,\varphi)\mapsto
V_h'(\varphi)$ is continuous on $[-C,C]\times\R$ and thus can be
locally uniformly approximated by polynomials. Our strategy will be to
choose suitable polynomial approximations $P^{(n)}(h,\varphi)$ of this
map and consider the dynamics given recursively by
$\bpsi^{(0)}=\btau^{(0)}$ and
\begin{equation}\label{eqn:psirecursion}
    \bpsi^{(n+1)} = \bpsi^{(n)} - \beta\delta \BB \bpsi^{(n)}
    - \delta \PP^{(n)}(\hh,\bpsi^{(n)}) + \sqrt{2\delta}\bet^{(n)}
\end{equation}
where the $i$th component of $\PP^{(n)}(\hh,\bpsi^{(n)})$ is
$P^{(n)}(h_i,\psi^{(n)}_i)$, i.e.
$\PP^{(n)}:\R^N\times\R^N\rightarrow\R^N$ is $P^{(n)}$ applied to each
component. We will show that it is possible to choose $P^{(n)}$ such
that $\E[H(\btau^{(t/\delta)})]$ is well approximated by 
$\E[H(\bpsi^{(t/\delta)})]$. Since $H(\bpsi^{(t/\delta)})$ is a polynomial in
$\bpsi^{(0)}$ and the noise $\bet^{(0)},\bet^{(1)},\ldots$, this will
yield a polynomial approximation to
$\E[H(\btau^{(t/\delta)})]/N$.

We will begin by bounding the sub-Gaussian norms of the coordinates
of $\btau^{(n)}$. Following, e.g. \cite[Exercise 2.40]{Ver26}, for a
sub-Gaussian random variable $X$, we define the sub-Gaussian variance
\begin{align*}
\Var_G(X) = \inf \left\{ \sigma^2 : \E \left[e^{\lambda(X-\E[X])}
\right] \le e^{\sigma^2 \lambda^2 / 2} \text{ for all } \lambda\in\R
\right\}
\end{align*}
and the exact sub-Gaussian norm
\begin{align*}
\|X\|_G = \sqrt{\Var_G(X) + \E[X]^2}.
\end{align*}
Furthermore, $\|\cdot\|_G$ satisfies the triangle
inequality, i.e. $\|X+Y\|_G\le\|X\|_G+\|Y\|_G$ for all $X$ and $Y$,
as well as the improved estimate
$\|X+Y\|_G^2\le\|X\|_G^2+\|Y\|_G^2$ if $X$ and $Y$ are independent and
$\E[Y]=0$. It is also the case that $\|X\|_G\ge\E[X^2]^{1/2}$, from
which we may conclude that if $f:\R\rightarrow\R$ is an $L$-Lipschitz
function with $|f(0)|\le L$, then
\begin{align*}
\|f(X)\|_G\le\sqrt{\Var_G(f(X))} + \E[|f(X)|] \le 18L\|X\|_G + (L +
L\E[|X|]) \le 20L(1+\|X\|_G).
\end{align*}
%

For a sub-Gaussian random vector $\xx\in\R^N$, let
\begin{align*}
\|\xx\|_{G,\infty} = \sup_{1\le i\le N}\|x_i\|_G
\end{align*}
be the maximum exact sub-Gaussian norm of a coordinate of $\xx$. We
have the following basic properties of $\|\cdot\|_{G,\infty}$ by the
previous discussion.

\begin{enumerate}
    \item By the triangle inequality for the exact
    sub-Gaussian norm, we have that
    \begin{align*}
\|\xx+\yy\|_{G,\infty} \le \|\xx\|_{G,\infty} + \|\yy\|_{G,\infty}.
\end{align*}
    \item For any deterministic matrix $\AA\in\R^{N\times N}$, 
    we have that
    \begin{align*}
\|\AA\xx\|_{G,\infty} \le \|\AA\|_{\infty\rightarrow\infty}
\|\xx\|_{G,\infty}\, .
\end{align*}
    \item Given $L$-Lipschitz functions
    $f_1,\ldots,f_N:\R\rightarrow\R$ with $|f_1(0)|,\ldots|f_N(0)|\le
    L$ and their coordinate-wise application
    $\ff:\R^N\rightarrow\R^N$, we have that
    \begin{align*}
\|\ff(\xx)\|_{G,\infty} \le 10L(1 + \|\xx\|_{G,\infty}).
\end{align*}
\end{enumerate}

Similar to before, in what follows all constants in the statements of
intermediate steps are uniform over $[0,1]\times\cG_{[0,C],C}$ and
depend only on $C,V,\delta,t$.
\begin{lem}\label{lem:psi2inftybound}
    There exists a constant $M$, depending only on $C,V,\delta,t$, such that
    $\|\btau^{(k)}\|_{G,\infty}\le M$ for all $k\le t/\delta$.
\end{lem}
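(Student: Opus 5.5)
The bound follows by induction on $k$ from the recursion \eqref{eqn:taurecursion}, using the three listed properties of $\|\cdot\|_{G,\infty}$. The aim is a one-step bound of the form
\begin{align*}
\|\btau^{(n+1)}\|_{G,\infty}\le a\,\|\btau^{(n)}\|_{G,\infty}+b,
\end{align*}
with constants $a,b$ depending only on $C,V,\delta$. Iterating it $t/\delta$ times then gives $M=a^{t/\delta}\|\btau^{(0)}\|_{G,\infty}+b\sum_{j<t/\delta}a^j$, which depends only on $C,V,\delta,t$.

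\emph{Base case.} The initial condition $\btau^{(0)}\sim\mu_0$ is a product measure. Its coordinates have densities proportional to $e^{-V_{h_i}(\varphi)}$, which are $\alpha$-strongly log-concave. By Bakry--\'Emery (or the Herbst argument), each coordinate has $\Var_G\le 1/\alpha$. The means are bounded uniformly over $|h_i|\le C$, since $h\mapsto\int\varphi e^{-V_h}/\int e^{-V_h}$ is continuous on $[-C,C]$, as in Proposition~\ref{prop:FV0h}. Hence $\|\btau^{(0)}\|_{G,\infty}\le M_0(C,V)$.

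\emph{Inductive step.} I split the update as
\begin{align*}
\btau^{(n+1)}=\underbrace{\big(\btau^{(n)}-\beta\delta\BB\btau^{(n)}-\delta\grad V_\hh(\btau^{(n)})\big)}_{\text{deterministic map of }\btau^{(n)}}+\sqrt{2\delta}\,\bet^{(n)}.
\end{align*}
Each piece is bounded as follows.
\begin{itemize}
\item For the linear part, property 2 gives $\|(\II-\beta\delta\BB)\btau^{(n)}\|_{G,\infty}\le(1+\delta C)\|\btau^{(n)}\|_{G,\infty}$. This is exactly where the sparsity condition \ref{cond:SG} enters.
\item For the drift $\delta\grad V_\hh$, I apply property 3 coordinatewise. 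The map $\varphi\mapsto V'_{h_i}(\varphi)$ is $\alpha^{-1}$-Lipschitz. Also $|V'_{h_i}(0)|\le|V'_0(0)|+C\sup|\partial_hV'_h|$, which is bounded uniformly for $|h_i|\le C$ by the definition of an $\alpha$-good potential. So with $L=\max(\alpha^{-1},\sup_{|h|\le C}|V_h'(0)|)$ we get $\|\delta\grad V_\hh(\btau^{(n)})\|_{G,\infty}\le 10\delta L(1+\|\btau^{(n)}\|_{G,\infty})$.
\item The fresh noise term $\sqrt{2\delta}\bet^{(n)}$ has $\|\cdot\|_{G,\infty}=\sqrt{2\delta}$.
\end{itemize}
By the triangle inequality (property 1), these combine to give the one-step bound with $a=1+\delta C+10\delta L$ and $b=10\delta L+\sqrt{2\delta}$. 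The coordinates of $\btau^{(n)}$ are dependent, but properties 2 and 3 only use coordinatewise norms and the triangle inequality, so independence is never needed. The crude triangle inequality suffices here, so I do not need the sharper orthogonal estimate.

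\emph{Main obstacle.} The only delicate point is the drift term. It requires the Lipschitz estimate for $f(X)$ in property 3, so I need to check that the constants there (Lipschitz constant and $|f(0)|$) are uniform in $i$ and $N$. That uniformity is exactly what \ref{cond:BF} and the $\alpha$-good assumption provide. With it, the constant $M$ is independent of $N$ and of $\beta\in[0,1]$, as claimed.
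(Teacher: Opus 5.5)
Your proof is correct and follows the paper's argument: induction on $k$, with the base case from $\alpha$-strong log-concavity of $\mu_0$ and uniformly bounded means, and the inductive step from properties 1--3 of $\|\cdot\|_{G,\infty}$ together with \ref{cond:SG} and the $\alpha$-good assumption. The only difference is the noise term: the paper uses the estimate $\|X+Y\|_G^2\le\|X\|_G^2+\|Y\|_G^2$ for independent mean-zero $Y$, which adds $2\delta$ inside a square root rather than $\sqrt{2\delta}$ outside it, and this is what makes $M$ uniform in small $\delta$ in the subsequent remark; as you note, that refinement is not needed here because the lemma allows $M$ to depend on $\delta$.
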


\begin{proof}
    We proceed by induction on $k$ to show that
    $\|\btau^{(k)}\|_{G,\infty}\le M_k$ for some constant $M_k$
    depending only on $C,V,\delta$. For the base case of $k=0$, note
    that $\btau^{(0)}$ has law $\mu_0$. In particular, the coordinates
    of $\btau^{(0)}$ are independent with densities proportional to
    $e^{-V_h(\varphi)}\de\varphi$ for some $h\in[-C,C]$. Since
    $V_h''(\varphi)\ge\alpha$ for all $h$, $\tau^{(0)}_i$ is
    $\alpha$-strongly log-concave and we have that
    $\|\tau^{(0)}_i\|_G\le|\E[\tau_i^{(0)}]| +10\alpha^{-1/2}$. The
    continuity of $V_h$ in $h$ and compactness of $[-C,C]$ give a
    uniform bound on $|\E[\tau_i^{(0)}]|$ depending only on $C$ and
    $V$ so $\|\btau^{(0)}\|_{G,\infty}\le M_0$ for some constant
    $M_0$, completing the base case.

    For the induction step, by definition of $\alpha$-good
    potential the map $\varphi\mapsto V_h'(\varphi)$ is $\alpha^{-1}$-Lipschitz,
    and since $h\mapsto V_h'$ is continuous, 
    $\sup_{h\in[-C,C]}|V_h'(0)|\le L$, for some constant $L$. We will take without loss of generality $L\ge \alpha^{-1}$.
 Since the noise $\bet^{(n)}$ has mean $0$ and is
    independent of $\btau^{(n)}$ for all $n$, we have by
    \eqref{eqn:taurecursion} that
    \begin{align*}
\|\btau^{(k+1)}\|_{G,\infty}^2 \le \left\| \btau^{(k)} -
\beta\delta \BB \btau^{(k)} - \delta \grad
V_\hh(\btau^{(k)}) \right\|_{G,\infty}^2 + 2\delta
\end{align*}
    as the exact sub-Gaussian norm of a standard Gaussian is $1$. The
    basic properties of $\|\cdot\|_{G,\infty}$ then allow us to bound
    \begin{align*}
\left\| \btau^{(k)} - \beta\delta \BB \btau^{(k)} -
\delta\grad V_\hh(\btau^{(k)}) \right\|_{G,\infty} \le
\|\btau^{(k)}\|_{G,\infty} + \delta
\|\BB\|_{\infty\rightarrow\infty} \|\btau^{(k)}\|_{G,\infty} +
10\delta L(1 + \|\btau^{(k)}\|_{G,\infty}),
\end{align*}
    which means that $\|\tau^{(k+1)}\|_{G,\infty}^2 \le M_{k+1}$ for
    the constant
    \begin{equation}\label{eqn:Mkrecurse}
        M_{k+1} = \sqrt{\left(M_k + C\delta M_k + 20\delta L(1
        + M_k)\right)^2 + 2\delta}
    \end{equation}
    by the induction hypothesis. Here, we are using $\beta\le1$ and
    the sparsity bound $\|\BB\|_{\infty\rightarrow\infty}\le C$.
    Noting $M_{k+1}\ge M_k$ and taking $M=M_{t/\delta}$, the lemma is
    proved.
\end{proof}

\begin{rem}
    By a more careful analysis of \eqref{eqn:Mkrecurse}, we can show
    that the constant $M$ in Lemma~\ref{lem:psi2inftybound} can be
    taken to be uniform in $\delta\in(0,\delta_0)$ for some
    $\delta_0>0$ and depend only on $C,V,t$ for sufficiently small
    $\delta$. By increasing $M_0$ if necessary, we may assume that
    $M_k\ge1$ for all $k$. Then, we may bound
    \begin{align*}
M_k + C\delta M_k + 20\delta L(1 + M_k) \le
M_k(1+C\delta+40L\delta)\le M_k(1+\tilde C\delta)
\end{align*}
    for $\tilde C = C+40L$. As $(1+\tilde C\delta)^2\le 1+3\tilde
    C\delta$ for sufficiently small $\delta$, we then have the bound
    $M_{k+1}^2 \le M_k^2(1+3\tilde C\delta) + 2\delta$. Iterating gives
    \begin{align*}
M_{t/\delta}^2 \le M_0^2(1+3\tilde C\delta)^{t/\delta} +
\delta\sum_{k=0}^{t/\delta-1}(1+3\tilde C\delta)^k\le M_0^2e^{3\tilde
Ct}+\frac{e^{3\tilde Ct}-1}{3\tilde C},
\end{align*}
which is indeed independent of sufficiently small $\delta$.
\end{rem}

Recall that our plan is to approximate the gradient term $\grad V_\hh$
in \eqref{eqn:taurecursion} by a polynomial. To construct this
polynomial, we will need the following consequence of weighted
polynomial approximation of continuous functions on $\R\times[-C,C]$. It
seems to be a direct generalization of standard results
\cite{lubinsky2007survey} but we could not locate its exact form elsewhere,
so we prove it in Appendix~\ref{subapp:wtapprox},

\begin{prop}\label{prop:wtapprox}
    For all $A,\eps>0$, there exists a polynomial
    $P\in\R[h,\varphi]$ such that
    \begin{align*}
\sup_{(h,\varphi)\in[-C,C]\times\R} e^{-A\varphi^2}
|V_h'(\varphi)-P(h,\varphi)| < \eps\, .
\end{align*}
\end{prop}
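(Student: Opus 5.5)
The plan is to reduce to a one-variable weighted approximation problem in $\varphi$ with coefficients depending continuously on $h$. Those coefficients are then handled by ordinary Weierstrass approximation on the compact interval $[-C,C]$. The key structural facts are that $V$ is $\alpha$-good, so $\varphi\mapsto V_h'(\varphi)$ is $\alpha^{-1}$-Lipschitz uniformly in $h$, and that $\sup_{|h|\le C}|V_h'(0)|\le L$. Hence $|V_h'(\varphi)|\le L+\alpha^{-1}|\varphi|$ grows at most linearly, uniformly in $h$. This is far below the Gaussian weight $e^{-A\varphi^2}$.

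\textbf{Step 1 (truncation).} Choose $R$ large so that $e^{-A\varphi^2}(L+\alpha^{-1}|\varphi|)<\eps/4$ for $|\varphi|\ge R$. Fix a smooth cutoff $\chi$ equal to $1$ on $[-R,R]$ and $0$ outside $[-2R,2R]$. Set $g(h,\varphi)=\chi(\varphi)V_h'(\varphi)$. Then $g$ is continuous and compactly supported in $\varphi$, uniformly in $h\in[-C,C]$, and $e^{-A\varphi^2}|V_h'-g|<\eps/4$ everywhere.

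\textbf{Step 2 (weighted approximation of $g$).} I want a polynomial $P$ with $\sup_{[-C,C]\times\R} e^{-A\varphi^2}|g-P|<\eps/2$. This is the main obstacle. Plain Weierstrass on $[-C,C]\times[-2R,2R]$ gives no control of $P$ for $|\varphi|>2R$, where the polynomial may grow and only the Gaussian weight can tame it.

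I would handle this with the classical fact that polynomials are dense in $C_0(\R)$ under the weighted norm $\sup_\varphi e^{-A\varphi^2}|f(\varphi)|$. This is Bernstein's problem for the weight $e^{-A\varphi^2}$, which is solvable because $\int \log w/(1+\varphi^2)=-\infty$ (see \cite{lubinsky2007survey}).

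To make this uniform in $h$ with polynomial dependence on $h$, I would proceed as follows. First approximate $g$ uniformly on the compact set $[-C,C]\times[-2R,2R]$ by a finite sum $\sum_j q_j(h)f_j(\varphi)$. Here $q_j$ are polynomials and $f_j\in C_c(\R)$. One way to get this is to use a partition of unity in $\varphi$ and the uniform continuity of $g$. Another is to apply Stone–Weierstrass to the tensor algebra, then multiply by a slightly larger cutoff to keep each $f_j$ compactly supported.

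Next, approximate each $f_j$ in the weighted norm by a polynomial $p_j(\varphi)$, to accuracy $\eps/(4\sum_j\sup_{|h|\le C}|q_j(h)|+1)$. Set $P(h,\varphi)=\sum_j q_j(h)p_j(\varphi)$. The errors add to at most $\eps/2$ after accounting for the weight, which is bounded by $1$.

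An alternative route avoids the density theorem and is more self-contained. One can use explicit Gaussian-damped polynomials, namely truncated Taylor series of $e^{-B\varphi^2}$ combined with Weierstrass on a large interval, but controlling the tails there is delicate. I would try the density-theorem route first.

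\textbf{Step 3 (combine).} Add the two errors from Steps 1 and 2 using the triangle inequality. This gives $\sup e^{-A\varphi^2}|V_h'-P|<3\eps/4<\eps$, which proves Proposition~\ref{prop:wtapprox}.
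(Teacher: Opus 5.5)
Your proof is correct, but it treats the two variables differently from the paper.

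\textbf{The paper's route.} It discretizes the compact parameter $h$. It picks nodes $h_1\le\cdots\le h_m$ in $[-C,C]$ with a partition of unity $g_i(h)$. It then applies one-variable weighted approximation directly to each $V_{h_i}'$ on all of $\R$. This needs density of polynomials among continuous $f$ with $e^{-A\varphi^2}f(\varphi)\to0$, since $V_{h_i}'$ grows linearly. Next it replaces each $g_i$ by a Stone--Weierstrass polynomial $Q_i(h)$. Finally it bounds $|V_h'(\varphi)-\sum_i V_{h_i}'(\varphi)g_i(h)|$ uniformly over $\varphi\in\R$, using $\sup|\partial_h V_h'|<\infty$ from the definition of an $\alpha$-good potential.

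\textbf{Your route.} You first truncate in $\varphi$, using only the uniform linear growth $|V_h'(\varphi)|\le L+\alpha^{-1}|\varphi|$. You then separate variables on a compact box, using only joint continuity. Weighted density is invoked only for compactly supported functions of $\varphi$.

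\textbf{Comparison.} The paper's argument is shorter and skips the separation-of-variables step. Yours shows that the Lipschitz-in-$h$ hypothesis is not needed for this proposition. The conclusion holds for any jointly continuous $f(h,\varphi)$ with $\sup_{|h|\le C}e^{-A\varphi^2}|f(h,\varphi)|\to0$ as $|\varphi|\to\infty$.

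\textbf{Two small execution points.}
\begin{enumerate}
\item In your Stone--Weierstrass variant, suppose you multiply by a cutoff whose support exceeds the box on which you approximated. Then the polynomial is uncontrolled on the annulus in between. Either approximate on the larger box, where $g$ vanishes near the edge, or approximate $V_h'$ itself on $[-C,C]\times[-2R,2R]$ and multiply by $\chi$ with $0\le\chi\le1$. Your partition-of-unity variant in $\varphi$ has no such issue.
\item Divergence of $\int\log w/(1+\varphi^2)$ is in general only a necessary condition for density. For the Gaussian, cite the classical sufficient form instead: for example, even weights with $\log(1/w(e^s))$ convex in $s$, or Freud weights $e^{-|x|^{\beta}}$ with $\beta\ge1$.
\end{enumerate}
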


\begin{lem}\label{lem:subgapprox}
    For all $M,\eps>0$ and $p\ge1$, there exists a polynomial
    $P\in\R[h,\varphi]$ such that $\E[|V_h'(X)-P(h,X)|^p]<\eps$
    for all $h\in[-C,C]$ and random variables $X$ with $\|X\|_G\le M$.
\end{lem}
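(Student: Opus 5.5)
The plan is to combine the weighted approximation of Proposition~\ref{prop:wtapprox} with a uniform Gaussian-type tail bound. The hypothesis $\|X\|_G\le M$ lets us integrate against that tail.

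\textbf{Step 1 (tail bound).} For any $X$ with $\|X\|_G\le M$, we have $|\E[X]|\le M$ and $\Var_G(X)\le M^2$. By the Chernoff bound,
\begin{align*}
\P(|X-\E X|>s)\le 2e^{-s^2/(2M^2)} .
\end{align*}
Hence for any $A<1/(16M^2)$ there is a constant $D=D(M,A,p)$ with $\E[e^{2pA X^2}]\le D$ for all such $X$. This follows by writing $X^2\le 2(X-\E X)^2+2M^2$ and integrating the tail bound.

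\textbf{Step 2 (apply the weighted approximation).} Fix such an $A$, with $2pA<1/(8M^2)$ so that Step 1 applies with exponent $2pA$. Proposition~\ref{prop:wtapprox} gives a polynomial $P$ with
\begin{align*}
|V_h'(\varphi)-P(h,\varphi)|<\eta\, e^{A\varphi^2}\qquad\text{for all }(h,\varphi)\in[-C,C]\times\R .
\end{align*}
Then for every $h\in[-C,C]$,
\begin{align*}
\E\big[|V_h'(X)-P(h,X)|^p\big]\le \eta^p\,\E\big[e^{pAX^2}\big]\le \eta^p D^{1/2},
\end{align*}
using Cauchy--Schwarz or simply monotonicity together with Step 1. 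Choosing $\eta$ with $\eta^pD^{1/2}<\eps$ gives the claim, and the bound is uniform in $h$ and in $X$.

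\textbf{Main obstacle.} The only delicate point is making sure the exponential moment bound in Step 1 depends only on $M$ and not on the particular law of $X$. This is why $A$ is chosen small relative to $M^{-2}$ before invoking Proposition~\ref{prop:wtapprox}. The order of choices matters: $A$ first (from $M,p$), then $\eta$, then $P$. With that order, the rest is routine.
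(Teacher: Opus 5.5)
Your proof is correct and follows the paper's argument. Like the paper, you bound the error by $\eta^p\,\E[e^{pAX^2}]$ via Proposition~\ref{prop:wtapprox}, choose $A$ small in terms of $M$ and $p$ so the exponential moment is uniformly bounded, and differ only in deriving that moment bound directly from the Chernoff tail rather than quoting $\|X\|_{\psi_2}\le 3\|X\|_G$. One small slip: the condition in Step 1 must be on the exponent (e.g.\ $2pA<1/(4M^2)$), not $A<1/(16M^2)$ independently of $p$, but your choice $2pA<1/(8M^2)$ in Step 2 already takes care of this.
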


\begin{proof}
    We have the bound
    \begin{align*}
\E[|V_h'(X)-P(h,X)|^p] \le \E[e^{ApX^2}] \cdot
\left(\sup_{(h,\varphi)\in[-C,C]\in\R} e^{-A\varphi^2} |V_h'(\varphi)
- P(h,\varphi)| \right)^p.
\end{align*}
    Choosing $A<\frac1{9pM^2}$ so that $\E[e^{ApX^2}]\le2$, the result
    follows from Proposition~\ref{prop:wtapprox} and the fact that
    $\|X\|_{\psi_2}\le 3\|X\|_G\le 3M$.
\end{proof}

Applying Lemma~\ref{lem:subgapprox}, we will henceforth fix a sequence
$P_1,P_2,\ldots\in\R[h,\varphi]$ of polynomials such that
$\E[|V_h'(X)-P_{\ell}(h,X)|^{\ell}]^{1/\ell}<1/\ell$ for all $h\in[-C,C]$ and
random variables $X$ with $\|X\|_G\le \ell$. We will let
$\PP_{\ell}:\R^N\times\R^N\rightarrow\R^N$ denote the component-wise
application of $P_\ell$. Given a polynomial $P:\R^N\rightarrow\R$, let
$\|P\|_1$ denote the sum of the magnitudes of the coefficients of $P$.

We are now in a position to bound the difference between
$\E[H(\btau^{(t/\delta)})]$ and $\E[H(\bpsi^{(t/\delta)})]$ for
$\btau^{(t/\delta)}$ and $\bpsi^{(t/\delta)}$ given by
\eqref{eqn:taurecursion} and \eqref{eqn:psirecursion}. The overall
idea is to integrate out the randomness in $\bet^{(n)}$ backwards from
$n=t/\delta$ to $n=0$ and use induction. In order to make the
induction work, we will generalize to bounding the difference between
polynomial observables $\E[P(\btau^{(t/\delta)})]$ and
$\E[P(\bpsi^{(t/\delta)})]$ for the class of sparse polynomials $P$
with $\|P\|_1=O(N)$.

In what follows, $\BB$ and $\hh$ will be regarded as fixed so their
entries will contribute only to coefficients of polynomials of
$\btau^{(k)}$ and all choices will be made uniformly over
$\cG_{[0,C],C}$. This argument relies on the sparsity condition
\ref{cond:SG} to ensure both that integrating backwards in time
preserves this class and that our observable of interest $H$ lies in
this class.
%
\begin{lem}\label{lem:backprop}
    Let $d$ be a positive integer and $D,\eps$ be positive reals.
    There exists a positive integer $\ell$ depending only on
    $d,D,\eps,C,V,\delta,t$ such that for all $k<t/\delta$ and
    polynomials $P:\R^N\rightarrow\R$ of degree at most $d$ with
    $\|P\|_1\le DN$, we have that
    \begin{align*}
 \left|\E[ P(\btau^{(k+1)}) ] - \E\left[ P\left( \btau^{(k)} -
 \beta\delta \BB\btau^{(k)} -\delta
 \PP_\ell(\hh,\btau^{(k)}) + \sqrt{2\delta}\bet^{(k)} \right) \right]
 \right| \le \eps N.
\end{align*}
    for all $\hh$ with $\|\hh\|_\infty\le C$. Furthermore, there exist
    a positive integer $d'$ and a positive real $D'$ depending only
    $d,D,\eps,C,V,\delta,t$ such that for all polynomials
     $P:\R^N\rightarrow\R$ of degree at most $d$ with
    $\|P\|_1\le DN$, we have that
    \begin{align*}
 \E\left[ P\left( \btau^{(k)} - \beta\delta\BB\btau^{(k)} -
 \delta\PP_\ell(\hh,\btau^{(k)}) + \sqrt{2\delta}\bet^{(k)}
 \right) \Big| \btau^{(k)} \right] = Q(\btau^{(k)})
\end{align*}
    for some polynomial $Q:\R^N\rightarrow\R$ of degree at most $d'$
    with $\|Q\|_1\le D'N$.
\end{lem}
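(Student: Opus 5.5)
Both parts rest on one decomposition of the argument of $P$. Write $\uu=\btau^{(k)}-\beta\delta\BB\btau^{(k)}-\delta\grad V_\hh(\btau^{(k)})+\sqrt{2\delta}\bet^{(k)}=\btau^{(k+1)}$ and $\vv$ for the same expression with $\grad V_\hh$ replaced by $\PP_\ell(\hh,\cdot)$, so that $\uu-\vv=\delta(\PP_\ell(\hh,\btau^{(k)})-\grad V_\hh(\btau^{(k)}))=:\delta\ee$. By Lemma~\ref{lem:psi2inftybound}, every coordinate of $\btau^{(k)}$ has $\|\tau^{(k)}_i\|_G\le M$. The basic properties of $\|\cdot\|_{G,\infty}$ (sparsity bound $\|\BB\|_{\infty\to\infty}\le C$, Lipschitz drift, Gaussian noise) then bound $\|\uu\|_{G,\infty}$ by a constant $M'$ depending only on $C,V,\delta,t$. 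For $\vv$ we only need moment bounds: each coordinate $v_i=u_i+\delta e_i$, and $\E|e_i|^q$ is uniformly small once $\ell\ge\max(M,q)$. Indeed, by the choice of $P_\ell$ (via Lemma~\ref{lem:subgapprox}), $\E[|e_i|^\ell]^{1/\ell}<1/\ell$ whenever $\|\tau_i^{(k)}\|_G\le M\le\ell$.

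For the first inequality, write $P=\sum_{S}c_S\,m_S$ as a sum of monomials $m_S$ of degree $\le d$, with $\sum|c_S|\le DN$. It suffices to show $|\E[m_S(\uu)]-\E[m_S(\vv)]|\le\eps/D$ uniformly over all monomials of degree $\le d$. Telescope $m_S(\uu)-m_S(\vv)$ by switching one factor at a time from $u_{j}$ to $v_j$. Each term is then a product of at most $d-1$ factors $u$ or $v$ times one factor $\delta e_j$. By H\"older with exponent $d$, each term is bounded by $\delta\,\E|e_j|^{d}{}^{1/d}$ times products of $\E|u_i|^{d}{}^{1/d}$ and $\E|v_i|^{d}{}^{1/d}$. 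These moments are uniformly bounded: $u_i$ through $M'$, and $v_i$ through $M'$ plus the small $e_i$ moments. Taking $\ell\ge\max(M,d)$ large enough makes $\E[|e_j|^d]^{1/d}\le\E[|e_j|^\ell]^{1/\ell}<1/\ell$, and the first claim follows. Note that $\ell$ depends only on $d,D,\eps,C,V,\delta,t$, as required.

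For the second claim, expand $P(\vv)$ and take the expectation over the independent Gaussian $\bet^{(k)}$. Each coordinate $v_i=\tau_i-\beta\delta\sum_j B_{ij}\tau_j-\delta P_\ell(h_i,\tau_i)+\sqrt{2\delta}\eta_i$ is a polynomial in $\btau^{(k)}$ and $\eta_i$. Its coefficient $\ell^1$-mass is at most $1+\delta C+\delta\|P_\ell\|_1\max(1,C)^{\deg P_\ell}+\sqrt{2\delta}$, using $\|\BB\|_{\infty\to\infty}\le C$ and $|h_i|\le C$, and its degree is at most $\deg P_\ell$. Substituting into a monomial of degree $\le d$ gives a polynomial of degree $\le d\deg P_\ell$ whose coefficient mass is at most the product of these masses. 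Integrating out $\eta$ replaces $\eta_i^r$ by Gaussian moments bounded by $(d-1)!!$, which only shrinks or rescales the $\ell^1$ mass by a constant. Summing over monomials weighted by $|c_S|$ gives $\|Q\|_1\le D'N$, with $d'=d\deg P_\ell$. Here $\beta\le1$ keeps the constants uniform.

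The main obstacle is the first part: the crude monomial bound $\sum|c_S|$ times a uniform per-monomial error requires uniform moment control of products of coordinates of $\btau^{(k)}$. This is exactly where the coordinatewise sub-Gaussian bound (and hence sparsity) is essential.
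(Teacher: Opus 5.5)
Your proof is correct and follows essentially the paper's route. Both arguments reduce to monic monomials using $\|P\|_1\le DN$, control each monomial difference by H\"older with the coordinatewise sub-Gaussian bounds of Lemma~\ref{lem:psi2inftybound} and the $L^\ell$-accuracy of $P_\ell$, and bound $\|Q\|_1$ by submultiplicativity of the coefficient $\ell^1$ norm together with bounded Gaussian moments. The one cosmetic difference is that you telescope one factor at a time, which also requires moment bounds on the perturbed coordinates $v_i$; you correctly obtain these from the triangle inequality. The paper instead expands $\prod_j Y_j=\prod_j\bigl(X_j-(X_j-Y_j)\bigr)$, so it only needs moments of $X_i$ and of $X_j-Y_j$.
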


\begin{proof}
    It suffices to show that for all $\eps_0>0$, we may choose
    $\ell$ so that
    \begin{align*}
 \left|\E[ R(\btau^{(k+1)}) ] - \E\left[ R\left( \btau^{(k)} -
 \beta\delta \BB\btau^{(k)} - \delta
 \PP_\ell(\hh,\btau^{(k)}) + \sqrt{2\delta}\bet^{(k)} \right)
 \right]\right| \le \eps_0
\end{align*}
    whenever $R:\R^N\rightarrow\R$ is a monic monomial of degree at
    most $d$ and $\|\hh\|_\infty\le C$. Indeed, summing over all terms
    of $P$ and using $\|P\|_1\le DN$ gives the desired conclusion when
    $\eps_0<\frac\eps D$.

    To choose such a suitable $\ell$, let $X_1,\ldots,X_m$ be the (not
    necessarily distinct) $\deg R=m\le d$ coordinates of
    $\btau^{(k+1)}$ corresponding to the factors of $R$, and let
    $Y_1,\ldots,Y_m$ be the corresponding coordinates of
    \begin{align*}
\btau^{(k)}-\beta\delta\BB\btau^{(k)}
-\delta\PP_\ell(\hh,\btau^{(k)}) +\sqrt{2\delta}\bet^{(k)}.
\end{align*}
    We have by H\"older's inequality that
    \begin{align*}
\E[ |X_1\cdots X_m - Y_1\cdots Y_m| ] \le
\sum_{S\subsetneq\{1,\ldots,m\}} \prod_{i\in S} \E[|X_i|^m]^{1/m}
\prod_{j\notin S} \E[ |X_j-Y_j|^m ]^{1/m}.
\end{align*}
    By Lemma~\ref{lem:psi2inftybound}, we have for all $i\le m$ and
    $k\le t/\delta$ that $\|X_i\|_G\le \|\btau^{(k+1)}\|_{G,\infty}\le
    M$ for some constant $M$ depending only on $C,V,\delta,t$. Since
    \begin{align*}
\E[|X_i|^m]^{1/m} \le 2m^{1/2} \|X_i\|_{\psi_2} \le 6m^{1/2}\|X_i\|_G
\le 6dM
\end{align*}
    and
    \begin{align*}
\E[ |X_j-Y_j|^m ]^{1/m} = \delta \E[ |V_{h_j}'(\tau^{(k)}_j) -
P_\ell(h_j,\tau^{(k)}_j)|^m ]^{1/m} \le \frac\delta{\ell}
\end{align*}
    for all $\ell>\max(m,M)$ by the construction of $P_\ell$, we may
    choose $\ell$, depending only on $d,\eps_0,M$, so that
    \begin{align*}
|\E[X_1\cdots X_m]-\E[Y_1\cdots Y_m]|\le \E[|X_1\cdots X_m-Y_1\cdots
Y_m|]\le 2^d(1+6dM)^d \frac\delta{\ell} \le \eps_0,
\end{align*}
as desired.

    For the second part, it is clear that $Q$ is a polynomial in $\hh$
    and $\btau^{(k)}$ whose degree is at most $\deg P\cdot(1+\deg
    P_\ell)$, so it remains to show that $\|Q\|_1\le D'N$ for some
    $D'$. By the same argument as before, it suffices to show the
    analogous result for monic monomials $R$, i.e. that for all such
    $R:\R^N\rightarrow\R$ of degree at most $d$, we have that
    \begin{align*}
\E\left[ R\left( \btau^{(k)} - \beta\delta \BB\btau^{(k)} -
\delta \PP_\ell(\hh,\btau^{(k)}) + \sqrt{2\delta} \bet^{(k)}
\right) \Big| \btau^{(k)} \right] = S(\btau^{(k)})
\end{align*}
    for $S:\R^N\rightarrow\R$ a polynomial with $\|S\|_1\le D_0$,
    where $D_0$ depends only on $d,D,\eps,C,V,\delta,t$.

    Let us briefly consider the polynomial
    \begin{align*}
R_0(\btau^{(k)},\bet^{(k)}) = R \left( \btau^{(k)} -
\beta\delta \BB\btau^{(k)} - \delta
\PP_\ell(\hh,\btau^{(k)}) + \sqrt{2\delta}\bet^{(k)} \right)
\end{align*}
    of the coordinates of $\btau^{(k)}$ and $\bet^{(k)}$. Suppose that
    $R(\xx) = \prod_{i=1}^mx_{a_i}$ for
    $a_1,\ldots,a_m\in\{1,\ldots,N\}$ with $m\le d$. We then have that
    \begin{align*}
        \|R_0\|_1 &\le \prod_{i=1}^m \left\| \left( \btau^{(k)} -
        \beta\delta \BB\btau^{(k)} - \delta
        \PP_\ell(\hh,\btau^{(k)}) + \sqrt{2\delta}\bet^{(k)}
        \right)_{a_i}\right\|_1 \\ &\le \prod_{i=1}^m \left( 1 +
        \beta\delta \|\BB\|_{\infty\rightarrow\infty} +
        \delta\|P_\ell(h_{a_i},\tau^{(k)}_{a_i})\|_1 +
        \sqrt{2\delta} \right),
    \end{align*}
     which is clearly a quantity bounded in terms of
     $d,D,\eps,C,V,\delta,t$ since $P_1,P_2,\ldots$ is a fixed
     sequence of two-variable polynomials depending only on $C$ and
     $V$, and $\ell$ is a function of $d,D,\eps,C,V,\delta,t$. On
     the other hand, evaluating the conditional expectation $\E[
     R_0(\btau^{(k)},\bet^{(k)}) \mid \btau^{(k)} ] = S(\btau^{(k)})$
     amounts to substituting in Gaussian moments for the terms
     containing components of $\bet^{(k)}$. The $m$th moments of
     $\bet^{(k)}$ for $m\le d$ are crudely bounded above in magnitude
     by $d!$, so $\|S\|_1\le d!\|R_0\|_1$ is bounded in terms of
     $d,D,\eps,C,V,\delta,t$. This concludes the proof.
\end{proof}

With Lemma~\ref{lem:backprop}, we can now
prove that the polynomial dynamics \eqref{eqn:psirecursion} 
is a good approximation of the discrete Langevin dynamics \eqref{eqn:taurecursion},
in the following sense. 
\begin{lem}\label{lem:afterbackprop}
For any $\eps>0$, there exist polynomials $(\P^{(n)}_\eps)_{n\ge 0}$,
depending only on $t,\delta,V,C$,  for the discrete polynomial dynamics
\eqref{eqn:psirecursion}, such that, for all $N\ge 1$, $\beta\in[0,1]$ and $G_N\in\cG_{[0,C],C}$
\begin{align}
\frac{1}{N}
\left|\E[H(\btau^{(t/\delta)})] -
\E[H(\bpsi^{(t/\delta)})]\right| \le \eps\, .
\end{align}
\end{lem}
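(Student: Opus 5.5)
We will prove the lemma by a backward induction (a hybrid or telescoping argument) over the time steps, applying Lemma~\ref{lem:backprop} once per step. Set $T=t/\delta$. The idea is to consider mixed processes that follow the exact discretized Langevin recursion \eqref{eqn:taurecursion} up to step $k$ and the polynomial recursion \eqref{eqn:psirecursion} afterwards. Rather than couple trajectories directly, we work at the level of observables, integrating out the noise backwards from step $T$.

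Define polynomials $Q_T,Q_{T-1},\ldots,Q_0$ on $\R^N$ recursively. Start with $Q_T=H$. Since $\|\BB\|_{\infty\to\infty}\le C$, the coefficient sum of $H(\xx)=\frac12\sum_{i,j}B_{ij}x_ix_j$ satisfies $\|H\|_1\le CN/2$, so $H$ has degree $2$ and $\|H\|_1\le D_TN$ with $D_T=C/2$. Given $Q_{k+1}$ of degree $\le d_{k+1}$ with $\|Q_{k+1}\|_1\le D_{k+1}N$, apply Lemma~\ref{lem:backprop} with parameters $(d_{k+1},D_{k+1},\eps/T)$. This yields an index $\ell_k$, and we set $P^{(k)}=P_{\ell_k}$ and
\begin{align*}
Q_k(\xx)=\E\left[Q_{k+1}\left(\xx-\beta\delta\BB\xx-\delta\PP_{\ell_k}(\hh,\xx)+\sqrt{2\delta}\bet^{(k)}\right)\right].
\end{align*}
The second part of Lemma~\ref{lem:backprop} gives $\deg Q_k\le d_k$ and $\|Q_k\|_1\le D_kN$, with $d_k,D_k$ depending only on $d_{k+1},D_{k+1},\eps,C,V,\delta,t$. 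After finitely many ($T$) steps, all of $\ell_k,d_k,D_k$ therefore depend only on $\eps,C,V,\delta,t$, and not on $N$, $\beta$ or $G$. Note that $\beta\le1$ is absorbed into the coefficient bounds.

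By construction and the independence of $\bet^{(k)}$ from the past, $\E[Q_k(\bpsi^{(k)})]=\E[Q_{k+1}(\bpsi^{(k+1)})]$. Hence $\E[H(\bpsi^{(T)})]=\E[Q_0(\bpsi^{(0)})]=\E[Q_0(\btau^{(0)})]$, using $\bpsi^{(0)}=\btau^{(0)}$. Telescoping gives
\begin{align*}
\E[H(\btau^{(T)})]-\E[H(\bpsi^{(T)})]=\sum_{k=0}^{T-1}\Big(\E[Q_{k+1}(\btau^{(k+1)})]-\E[Q_k(\btau^{(k)})]\Big).
\end{align*}
Each summand is exactly the quantity bounded in the first part of Lemma~\ref{lem:backprop}, with $P=Q_{k+1}$ and the actual process $\btau^{(k)}$, since $\E[Q_k(\btau^{(k)})]$ equals the expectation of $Q_{k+1}$ evaluated at the polynomially-updated $\btau^{(k)}$. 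Each summand is therefore at most $(\eps/T)N$ in absolute value, and summing gives the bound $\eps N$.

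The step needing the most care is ensuring that the first part of Lemma~\ref{lem:backprop} is applied only to the true process $\btau^{(k)}$, whose coordinates have the uniform sub-Gaussian bound of Lemma~\ref{lem:psi2inftybound}. The process $\bpsi^{(k)}$ has no such a priori control, and this is precisely why we telescope along $\btau$ and push the polynomial dynamics only into the observables $Q_k$. A second point of care is that the lemma is applied $T$ times with degrees and coefficient budgets that grow along the way. Because $T$ is fixed, the final choices of $\ell_k$ are still uniform in $N$, $\beta\in[0,1]$ and $G_N\in\cG_{[0,C],C}$, as required.
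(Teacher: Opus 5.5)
Your proposal is correct and follows essentially the same argument as the paper. The paper also builds the polynomials $H_k$ backward from $H_K=H$, applying Lemma~\ref{lem:backprop} with tolerance $\eps N/K$ at each step to choose $\ell_k$. It then telescopes $\E[H_{k+1}(\btau^{(k+1)})]-\E[H_k(\btau^{(k)})]$ along the true process, using $\E[H(\bpsi^{(K)})]=\E[H_0(\bpsi^{(0)})]=\E[H_0(\btau^{(0)})]$.
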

\begin{proof}
Letting $K=t/\delta$, we will take $\PP^{(k)}_\eps=\PP_{\ell_k}$ for $\ell_1,\dots \ell_K$
chosen so that the error incurred by replacing $\grad
V_\hh$ by $\PP_{\ell_k}$ for all $k$ is small. 
Define
\begin{equation}\label{eqn:Hk}
    H_k(\bpsi^{(k)})=\E[H(\bpsi^{(K)}) \mid\bpsi^{(k)}]\, ,
\end{equation}
and note that, as a consequence,
\begin{align*}
    H_k(\bpsi^{(k)}) = \E[H(\bpsi^{(K)}) \mid \bpsi^{(k)}] = \E[
    \E[H(\bpsi^{(K)}) \mid \bpsi^{(k+1)}] \mid \bpsi^{(k)}] = \E[
    H_{k+1}(\bpsi^{(k+1)}) \mid \bpsi^{(k)}]\, .
   \end{align*}

Along with $\ell_K,\ell_{K-1},\ldots,\ell_0$, we will construct
sequences $d_K,d_{K-1},\ldots,d_0$ and $D_K,D_{K-1},\ldots,D_0$ that
are uniform over $[0,1]\times\cG_{[0,C],C}$ so that 
\begin{align}
0\le j\le K\;\;\Rightarrow\;\; \deg H_j\le d_j\, ,
\|H_j\|_1\le D_jN\, .\label{eq:InductivePolynomialBounds}
\end{align}
We do so by backwards
induction. Since $H_K=H$, the above inequalities hold
with  $d_K=2$ and $D_K=C$ since $H_K=H$. 

Next assume $(\ell_j)_{j\ge k+1}$, $(d_j)_{j\ge k+1}$, and $(D_j)_{j\ge k+1}$
have been chosen so that the inequalities
\eqref{eq:InductivePolynomialBounds} holds for $j\ge k+1$.
We apply Lemma~\ref{lem:backprop} with $(d,D)=(d_{k+1},D_{k+1})$ to
choose $\ell_k$ so that
\begin{equation}\label{eqn:Plerror}
    \left| \E[ H_{k+1}(\btau^{(k+1)}) ] - \E\left[ H_{k+1}\left(
    \btau^{(k)} - \beta\delta \BB\btau^{(k)} - \delta
    \PP_{\ell_k} (\hh,\btau^{(k)}) + \sqrt{2\delta}\bet^{(k)} \right)
    \right] \right| \le \frac{\eps N}K\, .
\end{equation}
In particular, this determines $H_k:\R^N\rightarrow\R$ as the
polynomial given by
\begin{align*}
H_k(\xx) = \E\left[ H_{k+1}\left( \xx - \beta\delta \BB\xx -
\delta \PP_{\ell_k} (\hh, \xx) + \sqrt{2\delta}\bet^{(k)} \right)
\right].
\end{align*}
Lemma~\ref{lem:backprop} then implies that $\deg H_k\le d_k$ and
$\|H_k\|_1\le D_kN$ for $d_k$ and $D_k$ bounded in terms of
$d_{k+1},D_{k+1},\eps,C,V,\delta,t$, which completes the inductive
construction of the $\ell_k$.

With this choice of $\ell_k$, we have from Eq.~\eqref{eqn:Plerror} that
\begin{align*}
\left| \E[H_{k+1}(\btau^{(k+1)})] - \E[H_k(\btau^{(k)})] \right| \le
\frac{\eps N}K
\end{align*}
for all $0\le k<K$, which means by Eqs.~\eqref{eqn:psirecursion} and
\eqref{eqn:Hk} that
\begin{align*}
    \left| \frac1N \E[H(\btau^{(K)})] - \frac1N \E[H(\bpsi^{(K)})]
    \right| &= \left| \frac1N \E[H_K(\btau^{(K)})] - \frac1N
    \E[H_0(\bpsi^{(0)})] \right| \\ &= \left| \frac1N
    \E[H_K(\btau^{(K)})] - \frac1N \E[H_0(\btau^{(0)})] \right| \le \eps.
\end{align*}
\end{proof}

Summarizing, we have shown that the discrete polynomial dynamics
\eqref{eqn:psirecursion} approximates the discrete Langevin dynamics
\eqref{eqn:taurecursion} uniformly over $[0,1]\times\cG_{[0,C],C}$. 
It remains to analyze  the discrete polynomial dynamics \eqref{eqn:psirecursion} 
and prove
that $\E[H(\bpsi^{(K)})]/N\in\cL(\cG_{[0,C],C})$, for $K=t/\delta$ provided
$\PP^{(k)}=\PP_{\ell_k}$ for some choice of $\ell_k$, $k\in\{0,1,\ldots,
K-1\}$. We describe the intuition before passing to the actual proof.

 The main observation is 
that $H(\bpsi^{(K)})$ is a polynomial in
$\BB,\hh,\bpsi^{(0)}$, $(\bet^{(k)})_{k\le K-1}$.
The expectation is taken with respect to the noise variables 
$(\bet^{(k)})_{k\le K-1}$ and the initial condition $\bpsi^{(0)}$
which is distributed according to the product measure $\mu_0$.
Taking the expectation amounts to
substituting in Gaussian moments for each power of $\eta_i^{(k)}$, $i\le N$,
$k\le K-1$ and single-spin
moments for each power of $\psi^{(0)}_i$, $i\le N$.
These single-spin
moments can be further approximated by polynomials in $h_i$ over
$[-C,C]$, after which we arrive at a polynomial expression in
$G=(\BB,\hh)$.

Before providing a rigorous proof, we introduce 
one more piece of terminology. 
\begin{defn}\label{defn:rootedtest}
    A \emph{rooted test graph} is a connected test graph with a
    distinguished root vertex. If $G=(\AA,\hh_1,\ldots,\hh_k)$ is a
    weighted graph on $[N]$ and $X=(\WW,\dd_1,\ldots,\dd_k)$ is a
    rooted test graph on $[n]$ with root vertex $1$, then we define
    the \emph{weighted homomorphism density vector} $\btt(X,G)\in\R^N$
    as the vector whose $m$th component is given by
    \begin{align*}
    t_m(X,G) :=\sum_{\substack{f:[n]\rightarrow[N]\\f(1)=m}} \left( \prod_{i\in[n]}
\prod_{\ell=1}^k h_{\ell,f(i)}^{d_{\ell,i}} \right) \left( \prod_{i\le
j\in[n]} a_{f(i)f(j)}^{w_{ij}} \right).
\end{align*}
    A \emph{local vector} 
     is a vector-valued function on weighted
    graphs $G$ that can be expressed as a finite linear combination of
    $\btt(X,G)$ for some rooted test graphs $X$.
\end{defn}

In particular, the usual weighted homomorphism density $t(X,G)$ from
Definition~\ref{defn:leftconv} for the unrooted version of a rooted
test graph $X$ is the average of the entries of $\btt(X,G)$. The
utility of Definition~\ref{defn:rootedtest} is that while we are
ultimately interested in the scalar quantity $\E[
H(\bpsi^{(K)})]/N$, this quantity is defined by the vector recursion
\eqref{eqn:psirecursion}. With the notion of rooted test graphs,
\eqref{eqn:psirecursion} and the evaluation of $
H(\bpsi^{(K)})/N$ can be encoded by graphical operations.

\begin{lem}
    For any $\eps,t,\delta>0$, and an $\alpha$-good potential $V$,
     there exists a local function
    $q_{\eps}:\cG_{[0,C],C}\times[0,1]\rightarrow\R^N$ which can be written
    as a finite sum
    \begin{align*}
    \qq_{\eps}(G;\beta) = \sum_{X\in\cS} c_X(\beta) \btt(X,G)\, ,
    \end{align*}
    such that $\cS$ is a finite set, the coefficients $c_X(\beta)$
    are  polynomials in $\beta$ independent of $G$, and 
    for all $G\in\cG_{[0,C],C}$ and $\beta\in[0,1]$, we have (for $K=t/\delta$)
    \begin{align*}
    \left| \frac{1}{N}\E[H(\bpsi^{(K)})] - q_{\eps} (G;\beta) \right| \le \eps.
    \end{align*}
\end{lem}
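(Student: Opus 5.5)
We will show that $\bpsi^{(k)}$, conditionally on $\bpsi^{(0)}$, can be tracked through the recursion \eqref{eqn:psirecursion} as a polynomial whose coefficients are local vectors. The scalar $\E[H(\bpsi^{(K)})]/N$ is then an average of local vectors, i.e.\ a linear combination of the $t(X,G)$. The statement asks for a vector $\qq_\eps$, but we read it (as the displayed inequality suggests) as a scalar $q_\eps=\sum_X c_X(\beta)t(X,G)$, equivalently the normalized sum of the entries of $\qq_\eps$.

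\textbf{Step 1 (graphical encoding).} Fix the polynomials $P^{(k)}=P_{\ell_k}$ from Lemma~\ref{lem:afterbackprop}. Introduce formal vertex variables: for each coordinate $i$, the initial value $\psi^{(0)}_i$ and the noises $\eta^{(k)}_i$. We work with $k+1$ vertex-weight vectors, namely $\hh$ and one weight per noise time, and we also allow powers of $\psi_i^{(0)}$ as an extra vertex weight. This is exactly why Definition~\ref{defn:leftconv} allows $\R^k$-valued vertex weights.

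We prove by induction on $n$ that each coordinate $\psi^{(n)}_m$ is a finite sum of terms of the form: $\beta$-polynomial coefficient times $t_m(X,\cdot)$. Here $X$ is a rooted test graph with edge weights given by $\BB$, and its vertex weights record powers of $h_i$, $\psi^{(0)}_i$ and $\eta^{(j)}_i$, $j<n$.

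The inductive step works as follows. The term $(\II-\beta\delta\BB)\bpsi^{(n)}$ either keeps the root or attaches a new root joined by one edge to the old root. The noise term $\sqrt{2\delta}\eta^{(n)}_m$ is a single-vertex graph. For $P^{(n)}(h_m,\psi^{(n)}_m)$, each monomial $h_m^a(\psi^{(n)}_m)^b$ is handled by expanding $(\psi^{(n)}_m)^b$ as a product of $b$ rooted sums. A product of rooted homomorphism vectors sharing the root is the rooted homomorphism vector of the graphs glued at their roots, with root weights added. So the class of such expressions is closed under the recursion.

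\textbf{Step 2 (evaluating $H$).} Next, $H(\bpsi^{(K)})=\frac12\sum_{m,m'}B_{mm'}\psi^{(K)}_m\psi^{(K)}_{m'}$. Summing over $m,m'$ glues two rooted graphs along a new edge. This yields an unrooted connected test graph, and dividing by $N$ gives $t(X,\cdot)$.

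\textbf{Step 3 (taking expectations, the main obstacle).} The expectation factorizes over distinct vertices only for injective maps. We therefore convert each $t(X,\cdot)$ into a finite combination of $t_\inj$ of quotient graphs via the inclusion–exclusion mentioned after \eqref{eqn:inj}; this conversion is purely algebraic in the vertex-weight variables. Under an injection, the $N$ coordinates carry independent variables, so the expectation replaces vertex weight $\eta^{a}$ by the Gaussian moment $\E[g^a]$. It replaces $(\psi^{(0)})^{b}$ by $m_b(h_i)=\int\varphi^b e^{-V_{h_i}}/\int e^{-V_{h_i}}$, a continuous function of $h_i\in[-C,C]$.

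By Stone–Weierstrass we approximate each $m_b$, $b$ up to the (finite) maximal degree, uniformly on $[-C,C]$ by a polynomial in $h$ within $\eta'$. Then we convert back from $t_\inj$ to $t$ over graphs with only $\hh$-weights and $\BB$-edges.

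The error is controlled as follows. The number of terms is finite and independent of $N$. Each $|t_\inj(X,G)|$ with bounded weights is $O(1)$ uniformly over $\cG_{[0,C],C}$, since $\|\BB\|_{\infty\to\infty}\le C$, $\|\hh\|_\infty\le C$, and the moments $m_b$ are bounded. Hence the total error is at most a constant, depending only on $t,\delta,V,C$ and the fixed $\ell_k$, times $\eta'$. Choosing $\eta'$ small gives error $\le\eps$. The coefficients remain polynomials in $\beta$, since $\beta$ only enters through the $\beta\delta\BB$ terms.
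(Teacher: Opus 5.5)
Your proposal is correct and follows the paper's proof essentially step for step. You encode $\bpsi^{(K)}$ as a local vector of the decorated graph $(\BB,\hh,\bpsi^{(0)},\bet^{(0)},\ldots,\bet^{(K-1)})$ via closure under root-gluing and multiplication by $\BB$, join two rooted graphs by an edge to express $H(\bpsi^{(K)})/N$, and pass to $t_\inj$ so that taking expectations amounts to substituting Gaussian and single-spin moments, which are then approximated by polynomials in $h$. Your reading of $q_\eps$ as a scalar combination of the $t(X,G)$ matches what the paper's proof actually produces, and your explicit use of $\|\BB\|_{\infty\to\infty}\le C$ to bound each $t_\inj(X,G)$ uniformly makes precise the final uniformity step that the paper only sketches.
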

\begin{proof}
We will decorate our $G$ with $K+1$ extra external fields
$\bpsi^{(0)}$ and $\bet^{(0)},\bet^{(1)}\ldots,\bet^{(K-1)}$. These
will first be regarded as constants so we can express $\frac1N
H(\bpsi^{(K)})$ as a linear combination of
$t(X,(\BB,\hh,\bpsi^{(0)},\bet^{(0)},\ldots, \bet^{(K-1)}))$ for some
test graphs $X$, and then we will take expectations to recover a
linear combination of $t(X,(\BB,\hh))$.

Note that $\hh,\bpsi^{(0)},\bet^{(0)},\ldots,\bet^{(K-1)}$ are all
local vectors by considering $t(X,G)$ for a test graph $X$ with one
vertex whose weight vector in $\Z_{\ge0}^{K+2}$ is in the standard
basis. By definition, linear combinations of local vectors are also
local vectors. Thus, if we show that the set of local vectors is
closed under componentwise products and multiplication by $\BB$, then
we may conclude from \eqref{eqn:psirecursion} that $\bpsi^{(K)}$ is a
local vector. This is as each $\PP^{(k)}$ is a fixed componentwise
polynomial function of $\hh$ and $\bpsi^{(k)}$.

By linearity, to show closure under componentwise multiplication, it
suffices to prove that the componentwise product of $\btt(X_1,G)$ and
$\btt(X_2,G)$ is local for all rooted test graphs $X_1$ and $X_2$. In
fact, this componentwise product is equal to $\btt(X,G)$ for $X$ the
rooted test graph given by identifying the root vertices of $X_1$ and
$X_2$ and summing the weight vectors at the root. This is clear from
the expression for $\btt(X,G)$ in Definition~\ref{defn:rootedtest}.

Similarly, to show closure under multiplication by $\BB$, it suffices
to prove that $\BB\btt(X,G)$ is a local vector for all rooted test
graphs $X$. In fact, this product is equal to $\btt(Y,G)$ for $Y$ the
rooted test graph given by adding one new vertex $v$ with zero weight
to $X$, connecting it to the root of $X$ with a single edge, and
declaring $v$ as the new root of $Y$. This is also clear from the
expression for $\btt(X,G)$ in
Definition~\ref{defn:rootedtest}.

Now, given two rooted test graphs $X_1$ and $X_2$, consider the
unrooted test graph $X$ given by connecting the roots of $X_1$ and
$X_2$ with a single edge. By Definition~\ref{defn:leftconv} and
Definition~\ref{defn:rootedtest}, it is clear that we have the
identity
\begin{align*}
t(X,G) = \frac1N \<\btt(X_1,G), \BB\btt(X_2,G)\>.
\end{align*}
In particular, summing this over the linear combination of weighted
homomorphism density vectors for $\bpsi^{(K)}$ and recalling that
$H(\bphi)=\frac12\<\bphi, \BB\bphi\>$, we may conclude that $
H(\bpsi^{(K)})/N$ is a linear combination of $t(X,G)$ for some unrooted
test graphs $X$. By \eqref{eqn:psirecursion}, the coefficients of this
linear combination are polynomials in $\beta$ and $\delta^{1/2}$ with
no dependence on $G$.

Finally, we will take expectations with respect to
$\bpsi^{(0)},\bet^{(0)},\bet^{(1)}\ldots, \bet^{(K-1)}$. For this
part, we will instead write $H(\bpsi^{(K)})/N$ as a linear
combination of $t_\inj(X,G)$ using the inclusion-exclusion argument as
in the remarks after \eqref{eqn:inj}. This is permissible as all constructed graphs are connected. When this is done, taking
expectations with respect to
$\bet^{(0)},\bet^{(1)}\ldots,\bet^{(K-1)}$ amounts to substituting in
Gaussian moments into the terms $(\eta^{(\ell)}_{f(i)})^{d_{\ell,i}}$
in \eqref{eqn:inj} because we have restricted the sum to injections.

Taking expectations with respect to $\bpsi^{(0)}$ is a bit more subtle
because the law of $(\psi^{(0)}_{f(i)})^{d_i}$ depends on $i$.
Nevertheless, because $\mu^{(0)}_\beta=\mu_0$ is a product measure and
we are only summing over injections $f$, we may substitute each
$(\psi^{(0)}_{f(i)})^{d_i}$ for $M_{d_i}(h_i)$ when taking
expectations with respect to $\bpsi^{(0)}$ in \eqref{eqn:inj}, where
$M_d(h)=\int_\R \varphi^d e^{-V_h(\varphi)} \de\varphi$ is the $d$th
moment of the renormalized single-spin measure. By our regularity
assumptions on $V$, $M_d$ is a continuous function on $[-C,C]$ for all
$d$. Hence, they may be approximated by polynomials to arbitrarily
small error. Replacing the $M_{d_i}(h_i)$ with these polynomials of
$h_i$ in the expectation, we obtain a linear combination of certain
injective homomorphism densities $t_\inj(X,G)$ whose coefficients are
polynomials in $\beta$ and $\delta^{1/2}$. Since $\beta$ lies in the
compact interval $[0,1]$, the resulting linear combinations converge
to $ \E[H(\bpsi^{(K)})]/N$ uniformly in
$[0,1]\times\cG_{[0,C],C}$ as the approximation errors to
$M_{d_i}(h_i)$ converge to $0$. This is as we start with a fixed
expression for $H(\bpsi^{(K)})/N$ before taking expectations so
there is some bound on the exponents $d_i$ needed as well as the
number and size of terms in which they appear.
\end{proof}

\begin{proof}[Proof of Proposition~%
\ref{prop:locenergy}] The proof is already finished, but we will
summarize the steps for completeness and clarity. What we have just
shown is that $\E[H(\bpsi^{(K)})]/N$ is a
$[0,1]\times\cG_{[0,C],C}$-uniform limit of linear combinations of
injective homomorphism densities $t_\inj(X,G)$, hence ordinary
homomorphism densities $t(X,G)$, whose coefficients are polynomials in
$\beta$ (and $\delta^{1/2}$). Thus, $\beta\mapsto \frac1N
\E[H(\bpsi^{(K)})]$ is a norm-continuous map from $[0,1]$ to
$\cL(\cG_{[0,C],C})$. This implies, by our inductive argument in
Lemma~\ref{lem:afterbackprop}, the same for $\beta\mapsto 
\E[H(\btau^{(K)})]/N$, as well as for $\beta\mapsto \langle
H(\bphin)\rangle_\beta^{(t)}/N$, by our approximation argument after
Proposition~\ref{prop:discretize}.
\end{proof}

\section{Discussion}\label{sec:future}

The applications of Theorem~\ref{thm:main} in Section~\ref{sec:Disc} 
can be interpreted as providing a ``derandomization'' result for spin glasses on 
sparse regular graphs at high temperature. 
Indeed, the free energy asymptotics in Eq.~\eqref{eq:SpinGlassFreeEnergy} is 
standard when $G_N$ is a random $k$-regular graph (with $A_{ij}\sim\Unif(\{+\beta,-\beta\})$,
for $(i,j)\in E_N$). 
 Theorem~\ref{thm:main} allows for the transfer of this result to deterministic or pseudorandom 
 instances having the same left limit and satisfying the spectral condition.
 Similar considerations apply to Corollaries \ref{cor:Constrained} and \ref{cor:Antiferromagnetic}.

Remarkably, all of these models have the same asymptotics of the free energy, even if
the $\{A_{ij}:(i,j)\in E_N\}$ are non-random.  This is ultimately related to the fact that the local 
weak limit of the corresponding graphs (the rooted $k$-regular tree) is bipartite.

It would be interesting to find analogues of the previous results for graphs
where a spectral  condition does not apply but fast mixing for dynamics holds, 
see for example \cite{LMRW}.

An extension of Theorem~\ref{thm:main} to dense disordered models would 
also be interesting. This amounts to the removal of the assumption (SG).
In our proof, this assumption to control the difference between the energy computed by discretized Langevin dynamics and by the polynomial approximation to those dynamics. 
We expect this to be a technical artifact of our proof and that locality of the free energy should hold for dense models as well. 

Finally, it would also be interesting to establish similar theorems not only showing convergence of Ising model free energies but also of the Ising measure itself, in the local weak sense. 
For instance, in the examples of Section \ref{sec:coro}
we expect that, under the spectral condition, the limit should be the free boundary Gibbs measure on the Benjamini-Schramm limit of the underlying weighted graph.

\section*{Statement of AI use}

GPT-5.5 was used to in carrying out 
the analysis of functions $\phi_2(q;\beta)$ and $\psi(q;\beta,\gamma)$
in Appendices \ref{sec:proof-spin-glass} and \ref{sec:proof-free-energy-random-ferro}, respectively. GPT-5.6 was then used to check for mathematical and typographical issues upon completion of the manuscript. All AI-assisted calculations and suggestions
were independently checked by the authors, who take full responsibility for the contents of
the paper.

\section*{Acknowledgments}
AM was partially supported by the NSF Award MFAI-2501597.
MR was supported by the National Defense Science and Engineering Graduate (NDSEG) Fellowship Program. This material is based upon work supported by the Air Force Office of Scientific Research under award number FA9550-25-C-B010 in the amount of \$43,200.

\appendix

\section{Some technical lemmas for the proof of Theorem \ref{thm:main}}\label{app:lemmas}

In this appendix, we give proofs of some technical results that were
previously skipped. 

\subsection{Proof of Lemma~\ref{lem:bregman}}\label{subapp:bregman}

    Note that the hypotheses and conclusion do not change if an affine
    function is added to $f$ or if the domain is shifted or reflected.
    Thus, we may without loss of generality assume that $b=0$,
    $a\ge0$, and $f(b)=f'(b)=0$, which means that $f'(a)>U$ and
    $f''(x)\le W$ for $\max(a-1,0)\le x\le a$. If $a\le U/W\le1$, then
    $f''(x)\le W$ for $x\in[0,a]$ so $f'(a)\le Wa\le U$, a
    contradiction. Hence, we must have that $a>U/W$, which means that
    the bound $f''(x)\le W$ holds for all $x\in[a-U/W,a]$. In
    particular, we have on this interval that $f'(x)\ge
    f'(a)-W(a-x)>U-W(a-x)$. Since $f$ is convex with $f'(0)=0$, it is
    increasing on $[0,a]$. We may conclude that \begin{align*} f(a)\ge
    f(a)-f(a-U/W)=\int_{a-U/W}^af'(x)\de x>\int_{a-U/W}
    ^a(U-W(a-x))\de x=\frac{U^2}{2W},
\end{align*}
as desired.

\subsection{Proof of Proposition~\ref{prop:discretize}}\label{subapp:discretize}

    By scaling time, we may without loss of generality assume that $\sigma = 1$. We have by the SDE that
    \begin{align*}
\xx^{(\delta(n+1))}=\xx^{(\delta n)}+\int_{\delta
n}^{\delta(n+1)}a(\xx^{(t)})\de t+(\ww^{(\delta(n+1)} -\ww^{(\delta n)}),
\end{align*}
    which means that
    \begin{align*}
        \xx^{(\delta(n+1))} - \yy^{(n+1)} &= \xx^{(\delta n)} -
        \yy^{(n)} + \int_{\delta n}^{\delta(n+1)} (a(\xx^{(t)}) -
        a(\yy^{(n)})) \de t \\ &= \xx^{(\delta n)} - \yy^{(n)} +
        \delta(a(\xx^{(\delta n)}) - a(\yy^{(n)})) + \int_{\delta
        n}^{\delta(n+1)} (a(\xx^{(t)}) - a(\xx^{(\delta n)}))\de t.
    \end{align*}

    Taking the $L^2$ norm and using the Lipschitz property of $a$
    gives
    \begin{align}
        \E[ \|\xx^{(\delta(n+1))}-\yy^{(n+1)}\|^2 ]^{1/2} &\le
        (1+\delta K_3) \E[ \|\xx^{(\delta n)}-\yy^{(n)}\|^2 ]^{1/2} +
        K_3\int_{\delta n}^{\delta(n+1)} \E[ \|\xx^{(t)}-\xx^{(\delta
        n)}\|^2 ]^{1/2}\de t
        \label{eqn:x-yrecursion} \\
        &\le (1+\delta K_3) \E[ \|\xx^{(\delta n)}-\yy^{(n)}\|^2
        ]^{1/2} + K_3\delta^{1/2} \left( \int_{\delta n}^{\delta(n+1)}
        \E[ \|\xx^{(t)}-\xx^{(\delta n)}\|^2 ]\de t \right)^{1/2}.
        \nonumber
    \end{align}

    On the other hand, for any times $s\le t$ we may estimate
    \begin{align}
        \E[ \|\xx^{(t)} - \xx^{(s)}\|^2 ] &= \E\left[ \left\|\int_s^t
        a(\xx^{(u)})\de u + (\ww^{(t)} - \ww^{(s)})\right\|^2 \right]
        \label{eqn:xt-xs} \\ &\le 3\E\left[ \left\|\int_s^t
        (a(\xx^{(u)}) - a(\bzero)) \de u\right\|^2 \right] + 3\E\left[
        \left\|\int_s^t a(\bzero) \de u\right\|^2 \right] +
        3\E[\|\ww^{(t)} - \ww^{(s)}\|^2 ] \nonumber \\ &\le
        3(t-s)\E\left[ \int_s^t \|a(\xx^{(u)}) - a(\bzero)\|^2 \de u
        \right] + 3N(1+K_2)(t-s) \nonumber \\ &\le 3(t-s)K_3^2
        \int_s^t \E[ \|\xx^{(u)}\|^2 ]\de u + 3N(1+K_2)(t-s). \nonumber
    \end{align}

    Finally, It\^o's lemma gives
    \begin{align*}
 \|\xx^{(u)}\|^2 = \|\xx^{(0)}\|^2 + 2\int_0^u\<\xx^{(t)},
 a(\xx^{(t)})\>\de t + 2\int_0^u\<\xx^{(t)}, \de\ww^{(t)}\> + Nu.
\end{align*}
    By assumption, $\xx^{(t)}$ is the strong solution to an SDE with
    globally Lipschitz coefficients, so $\int_0^u\<\xx^{(t)},
    \de\ww^{(t)}\>$ is a martingale. Using the
    bound
    \begin{align*}
|\<\xx, a(\xx)\>| \le \|\xx\|\|a(\xx)\| \le \|\xx\|(\|a(\bzero)\| +
K_3\|\xx\|) \le \|a(\bzero)\|^2 + (1+K_3)\|\xx\|^2 \le
K_2N+(1+K_3)\|\xx\|^2,
\end{align*}
    we may take expectations to obtain
    \begin{align*}
        \E[ \|\xx^{(u)}\|^2 ] &= \E[ \|\xx^{(0)}\|^2 ] +
        2\int_0^u\E[\<\xx^{(t)}, a(\xx^{(t)})\>]\de t + Nu \\ &\le
        K_1N+(1+2K_2)Nu+2(1+K_3)\int_0^u\E[\| \xx^{(t)}\|^2]\de t.
    \end{align*}

    The function $f(u)=\frac1N\E[\|\xx^{(u)}\|^2]+\frac{1+2K_2}
    {2(1+K_3)}$ then satisfies
    \begin{align*}
f(u)\le K_1+\frac{1+2K_2}{2(1+K_3)}+2(1+K_3)\int_0^uf(t) \de t,
\end{align*}
    so Gr\"onwall's inequality implies the estimate
    $\E[\|\xx^{(u)}\|^2]\le K_4(1+e^{K_4u})N$ for some positive
    constant $K_4$ depending only on $K_1,K_2,K_3$. Hence for times
    $s\le t\le T$, we have from \eqref{eqn:xt-xs} that
    \begin{align*}
\E[ \|\xx^{(t)}-\xx^{(s)}\|^2 ] \le
(3TK_3^2K_4(1+e^{K_4T})+3(1+K_2))(t-s)N \le K_5(1+Te^{K_5T})(t-s)N
\end{align*}
    for some positive constant $K_5$ depending only on $K_1,K_2,K_3$.
    In particular, we have for $n<T/\delta$ that
    \begin{align*}
\int_{\delta n}^{\delta(n+1)} \E[ \|\xx^{(t)}-\xx^{(\delta n)}\|^2 ]\de t
\le K_5(1+Te^{K_5T})\delta^2N.
\end{align*}

    Denoting $D_n=N^{-1/2}\E[\|\xx^{(\delta n)}-\yy^{(n)}\|^2]^{1/2}$,
    we can plug this estimate into \eqref{eqn:x-yrecursion} to obtain
    for all $n<T/\delta$ the bound $D_{n+1}\le (1+\delta
    K_3)D_n+M\delta^{3/2}$ where $M=K_6(1+Te^{K_6T})$ for some positive
    constant $K_6$ depending only on $K_1,K_2,K_3$. Since $D_0=0$, we
    deduce the bound \begin{align*} D_{T/\delta}\le
    M\delta^{3/2}\sum_{k=0}^{T/\delta-1}(1+\delta
    K_3)^k\le MTe^{K_3T}\delta^{1/2}\le e^{K_7(T+1)}\delta^{1/2}
\end{align*}
for some constant $K_7$ depending only on $K_3$ and $K_6$. Taking $L = 2K_7+e^{2K_7}$ gives the desired conclusion.
%
%
\subsection{Proof of Proposition~\ref{prop:wtapprox}}\label{subapp:wtapprox}

    For $-C\le h_1\le\cdots\le h_m\le C$, consider a smooth partition
    of unity $g_1(h)+\cdots+g_m(h)=1$ of $[-C,C]$ such that each
    $\supp(g_i)$ is a closed interval with $h_i\in\supp(g_i)$ and
    $h_j\notin\supp(g_i)$ for $i\ne j$. In other words,
    $\supp(g_i)\subset(h_{i-1},h_{i+1})$, where $h_0=-\infty$ and
    $h_{m+1}=\infty$.

    For each fixed $h$, there exists by standard weighted
    approximation theory, see for example \cite{lubinsky2007survey}, a polynomial
    $P_h\in\R[\varphi]$ with
    \begin{align*}
\sup_{\varphi\in\R} e^{-A\varphi^2} |V_h'(\varphi)-P_h(\varphi)| <
\frac\epsilon{3m}.
\end{align*}
    By the Stone-Weierstrass theorem, there exist polynomials
    $Q_i\in\R[h]$ such that
    \begin{align*}
\sup_{h\in[-C,C]}|Q_i(h)-g_i(h)| <
\frac\epsilon{3m\cdot\sup_{\varphi\in\R} e^{-A\varphi^2}
|P_{h_i}(\varphi)|}
\end{align*}
    for each $i$.

    We consider the approximation
    $P(h,\varphi)=\sum_{i=1}^mP_{h_i}(\varphi) Q_i(h)$ to
    $V_h'(\varphi)$. By construction, we have for $h\in[h_i,h_{i+1}]$
    that
    \begin{align*}
        |V_h'(\varphi)-P(h,\varphi)| &\le \left|V_h'(\varphi) -
        \sum_{i=1}^m V_{h_i}'(\varphi)g_i(h) \right| +
        \left|\sum_{i=1}^m (V_{h_i}' - P_{h_i}(\varphi)) g_i(h)
        \right| + \left| \sum_{i=1}^m P_{h_i}(\varphi)(g_i(h) -
        Q_i(h)) \right| \\ &\le L(h_{i+1}-h_i) + \frac\epsilon3 +
        \frac\epsilon{3m} \sum_{i=1}^m \frac{|P_{h_i}(\varphi)|}
        {\sup_{\varphi\in\R} e^{-A\varphi^2} |P_{h_i}(\varphi)|},
    \end{align*}
    where $L$ is an upper bound on the Lipschitz constant of
    $V_h'(\varphi)$ in $h$ that is uniform over all $\varphi\in\R$.
    Such an $L$ exists by our definition of an $\alpha$-good family of
    potentials $V$ from Theorem~\ref{defn:contFV}. Choosing
    $h_1\le\cdots\le h_m$ so that $L(h_{i+1}-h_i)<\frac\epsilon3$ for
    all $i$ yields the desired bound
    \begin{align*}
e^{-A\varphi^2} |V_h'(\varphi)-P(h,\varphi)| < \epsilon
\end{align*}
    for all $(h,\varphi)\in[-C,C]\times\R$.

\section{Proofs of corollaries from Section
\ref{sec:Disc}}
\label{app:CoroProofs}

The following lemma is standard, and will be used
repeatedly in the proof of statements from Section \ref{sec:Disc}.
\begin{lem}
\label{lemma:left-convergence-locally-tree-like}
Let $G_N=(\AA_N,\hh_N)$ be a sequence of weighted $k$-regular graphs
which converge locally to the $k$-regular tree. Let the edge weights
$(A_{ij}:\{i,j\}\in E(G_N))$ be i.i.d. random variables with common
distribution $Q_A$, and independent of the vertex weights $(h_{i}:i\in
[N])$ which are i.i.d. random variables with common distribution
$Q_h$. Assume that $Q_A,Q_h$ are independent of $N$ and have bounded
support. For a constant $c\in\RR$, define
$G_N(c)=(\AA_N(c),\hh_N(c))$, where
$\AA(c)=\AA_N-c\bone\bone^{\sT}/N$, $\hh_N(c)=\hh_N$.

Then, the sequence $(G_N(c): N\ge 1)$ is almost surely
left-convergent.
\end{lem}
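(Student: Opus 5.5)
The plan is to show that, almost surely, every homomorphism density $t(X,G_N(c))$ converges. First I reduce to $c=0$, then establish convergence in expectation, and finally upgrade to almost sure convergence through a concentration bound.

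\emph{Reduction to $c=0$.} Fix a test graph $X$ on $[n]$ with edge multiset $E$. I expand each factor $a_{f(i)f(j)}(c)=a_{f(i)f(j)}-c/N$ multiplicatively over the edges. This writes $t(X,G_N(c))$ as a finite sum over subsets $S\subseteq E$ of terms
\begin{align*}
(-c/N)^{|E\setminus S|}\, N^{-c(X)}\sum_f(\cdots)\prod_{e\in S}a_{f(e)}\, .
\end{align*}
The edges in $E\setminus S$ impose no constraint on $f$. Summing over $f$ therefore produces $N^{c(X_S)}t(X_S,G_N)$, where $X_S$ is the test graph with edge set $S$. Each deleted edge that merges no components costs a factor $N^{-1}$ without any gain. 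Deleting a set of edges that increases the number of components by $r$ requires $|E\setminus S|\ge r$. So every term has size $O(N^{c(X_S)-c(X)-|E\setminus S|})\cdot|t(X_S,G_N)|$ with a nonpositive exponent. The exponent equals $0$ exactly when every deleted edge is a bridge, i.e. $E\setminus S$ is a set of bridges of a forest structure. Since $|t(X_S,G_N)|$ is bounded (by bounded weights and bounded degree $k$), the $t(X,G_N(c))$ is a fixed linear combination of the $t(X_S,G_N)$ plus $o(1)$. It therefore suffices to prove almost sure left convergence for $c=0$.

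\emph{Convergence in expectation.} For connected $X$, I use inclusion--exclusion to work with $t_\inj$. Then $t_\inj(X,G_N)=N^{-1}\sum_{f\ \mathrm{injective}}\prod h^{d}\prod a^{w}$. The product is nonzero only if $f$ maps the edges of $X$ onto edges of $G_N$, which forces the multiplicities $w_{ij}$ to sit on single edges. Taking expectation over the i.i.d. weights gives $N^{-1}\sum_f\prod_i m_h(d_i)\prod_{\{i,j\}}m_A(w_{ij})$, where $m_\cdot$ denotes moments of $Q_h$ and $Q_A$. This reduces to $N^{-1}$ times the number of injective copies of the simple graph underlying $X$ in $G_N$. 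Local convergence to the $k$-regular tree means the fraction of vertices whose radius-$|X|$ ball is a tree tends to $1$. Hence this count divided by $N$ converges: it equals the number of rooted copies in the tree if the underlying graph is a tree, and $0$ if it contains a cycle. By bounded degree, the exceptional vertices contribute $o(1)$.

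\emph{Almost sure convergence.} This is the main step. The quantity $t_\inj(X,G_N)$ is a function of the independent weights $\{A_e\}$ and $\{h_i\}$. Changing a single weight changes it by at most $C_X/N$, because bounded degree gives only $O(1)$ copies through any given edge or vertex. There are $O(N)$ variables, so McDiarmid's inequality gives
\begin{align*}
\P\big(|t-\E t|>\eps\big)\le 2e^{-c\eps^2N}\, .
\end{align*}
This is summable in $N$, so Borel--Cantelli yields almost sure convergence for each $X$. There are countably many test graphs, so a single almost sure event works for all of them simultaneously. Converting back from $t_\inj$ to $t$ is again inclusion--exclusion, and disconnected $X$ are handled by multiplicativity. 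The only delicate point I expect is checking the bounded-differences constant uniformly; this is immediate from $k$-regularity and bounded support.
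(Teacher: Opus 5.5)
Your proof is correct. The reduction from general $c$ to $c=0$ is the same as the paper's. You expand each factor $a_{f(i)f(j)}-c/N$ over the edges of $X$ and factor the sum over the components of the surviving graph. You then observe that $c(X_S)-c(X)\le |E\setminus S|$, with equality only when every deleted edge is a cut edge of $X$, so every other term is $O(1/N)$. Your phrasing ``bridges of a forest structure'' is loose, but the content matches the paper.

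Where you differ is the $c=0$ case itself. The paper first remarks, informally, that one can modify $o(N)$ edges to raise the girth above $2|V(X)|+1$. It then simply asserts that each $t(Z,G_N)$ converges. You actually prove this in two steps:
\begin{enumerate}
\item You compute $\E\, t_\inj(X,G_N)$ using the independence of the weights along injective embeddings. This reduces it to moments of $Q_A,Q_h$ times the normalized count of embeddings of the underlying simple graph. That count converges by local tree-likeness together with the bounded-degree bound of $O(k^{|V(X)|})$ embeddings per exceptional vertex.
\item You upgrade to almost sure convergence via McDiarmid's inequality with $O(1/N)$ bounded differences, followed by Borel--Cantelli. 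A single null set serves all test graphs because there are countably many of them.
\end{enumerate}
This second step is exactly where the randomness of the weights and the words ``almost surely'' in the statement enter. So your argument supplies what the paper's proof leaves implicit, and it makes the girth-modification device unnecessary.

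One implicit point you should state explicitly: if the underlying graphs are themselves random (as for the random regular graphs used in Corollary~\ref{coro:ReductionToRandomGraphs}), you are conditioning on them. This is legitimate because the weights are independent of the graphs, and Borel--Cantelli needs no independence across $N$.
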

\begin{proof}
First of all, as remarked in the introduction, it is sufficient to
prove convergence of  homomorphism counts when $X$ is a connected graph.
Fixing such a graph $X$, we can assume that $G_N$ has
girth larger than $2|V(X)|+1$. If this is not the case,
we can always modify $G_N$ for $o(N)$ edges, and achieve the desired girth, 
and this does not modify the limit of $t(X,G_N(c))$.

Given a graph $X=(\WW,\dd)$, 
we let $E(\WW)$ be the set of edges, where vertices $i$, $j$ 
are connected by exactly $w_{i,j}$ distinct edges. 
For $\ell:E(\WW)\to \{0,1\}$, we let $|\ell|=\sum_{e\in E(\WW)} \ell(e)$,
and $\cF(X,\ell)$ be the set of connected components of the graph 
obtained by removing the edges of $X$ with $\ell(e)=1$. 
We distribute the vertex weights $d_i = \sum_{Z\in \cF(X,\ell)} d_{Z,i}$
arbitrarily.
We then have
\begin{align*}
t(X,G_N(c)) &= \frac{1}{N}\sum_{\ell\in\{0,1\}^{E(\bW)}}
\sum_{f:[n]\to [N]} 
\left( \prod_{i\in[n]}  h_{f(i)}^{d_{i}}
\right) \left( \prod_{i\le j\in[n]} 
\Big(a_{f(i)f(j)}-\frac{c}{N}\Big)^{w_{ij}} \right)\\
& = \frac{1}{N}
\sum_{\ell\in \{0,1\}^{E(X)}} \left(-\frac{c}{N}\right)^{|\ell|}
\prod_{Z\in \cF(X,\ell)}
\sum_{f_Z:V(Z)\to [N]}
\left( \prod_{i\in V(Z)}  h_{f_Z(i)}^{d_{Z,i}}
\right) \left( \prod_{i\le j\in V(Z)} 
a_{f_Z(i)f_Z(j)}^{w_{Z,ij}} \right)\\
&= \frac{1}{N}
\sum_{\ell\in \{0,1\}^{E(X)}} \left(-\frac{c}{N}\right)^{|\ell|}
N^{|\cF(X,\ell)|}\prod_{Z\in \cF(X,\ell)} t(Z,G_N)
\end{align*}
%
It is easy to see that $|\cF(X,\ell)|\le |\ell|+1$ and equality 
holds only if all edges $e$ with $\ell(e)=1$ are cut edges of $X$. If equality does not hold, then the contribution of $\ell$ to the sum is $O(N^{-1})$ and vanishes in the limit. Otherwise, the contribution of $\ell$ to the sum is
\[
    (-c)^{|\ell|}\prod_{Z\in\cF(X,\ell)}t(Z,G_N).
\]
Since the sum runs over a bounded number of terms and each of the $t(Z,G_N)$
converges, we obtain that $t(X,G_N(c))$ converges.
\end{proof}

We pause to emphasize two immediate consequences of Lemma
\ref{lemma:left-convergence-locally-tree-like}.
\begin{rem}
For $Q_A=\delta_{a}$, a Dirac mass at $a$, and $\AA_N$, $\hh_N$ as in
Lemma \ref{lemma:left-convergence-locally-tree-like}, define
$G^{\perp}_N=(\PP^{\perp}\AA_N\PP^{\perp},\hh_N)$. Since
$\PP^{\perp}\AA_N\PP^{\perp}=\AA_N(c)$ for $c=ka$, it follows that
$(G_N^{\perp}: N\ge 1)$ is almost surely left-convergent.
\end{rem}

\begin{rem}
Let $(G^{(1)}_N(a):N\ge 1)$, $(G^{(2)}_N(a):N\ge 1)$ be two sequences
as per Lemma \ref{lemma:left-convergence-locally-tree-like} (with the
same weights distributions $Q_A$, $Q_h$). Then, these are left-convergent to the same limit, as can be seen by considering any
sequence obtained by interleaving the $(G^{(1)}_N(a))$ and
$(G^{(2)}_N(a))$.
\end{rem}

By taking one of these two sequences in the last remark to be a random
$k$-regular graph, and applying Theorem \ref{thm:main}, we obtain the
following.
\begin{cor}\label{coro:ReductionToRandomGraphs}
Let $(G_N=(\AA_N,\hh_N):N\ge 1)$ be a sequence of weighted $k$-regular
graphs satisfying the assumptions of Lemma
\ref{lemma:left-convergence-locally-tree-like}, and let
$G^{(r)}_N=(\AA^{(r)}_N,\hh^{(r)}_N)$ be a random $k$-regular graph
(either uniformly random or distributed according to the configuration
model) with the same edge-weight and vertex-weight distributions as $G_N$.


Fix $c\in\RR$. If (eventually almost surely) it holds that
$\lambda_{\max}(\AA_N(c)),\lambda_{\min}(\AA_N(c))\in I$ and $\lambda_{\max}(\AA^{(r)}_N(c)),\lambda_{\min} (\AA^{(r)}_N(c))\in I$, for some $N$-independent interval $I$ of length less than $1$ then, almost
surely,
\begin{align}
\lim_{N\to\infty} F(G_N(c))&= \lim_{N\to\infty} F(G^{(r)}_N(c))\, .
\end{align}
\end{cor}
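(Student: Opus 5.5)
The plan is to interleave the two sequences and apply Theorem~\ref{thm:main_explicit} to the interleaved sequence. Fix an $N$-independent interval $I$ of length less than $1$ as in the hypothesis. On the almost sure event where the spectral hypothesis eventually holds for both $\AA_N(c)$ and $\AA^{(r)}_N(c)$, and where both $(G_N(c))$ and $(G^{(r)}_N(c))$ are left-convergent to the same limit, we build the sequence $\tilde G_{2N-1}=G_N(c)$, $\tilde G_{2N}=G^{(r)}_N(c)$. Lemma~\ref{lemma:left-convergence-locally-tree-like} and the second remark after it give this left-convergence event. Concretely, the lemma gives left-convergence of each sequence, and the interleaving remark identifies the limits. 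The random regular graph converges locally to the $k$-regular tree almost surely (for the configuration model this holds after conditioning on simplicity, or by noting that the number of short cycles is $O(1)$). The only point to check is that the remark's argument actually gives a \emph{common} limit. This holds because, in the computation in the proof of Lemma~\ref{lemma:left-convergence-locally-tree-like}, $\lim t(Z,G_N)$ for connected $Z$ depends only on the local weak limit. That limit is the tree with i.i.d.\ edge weights $Q_A$ and vertex weights $Q_h$, and the dependence is through a law-of-large-numbers concentration of the weighted tree-like counts.

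Next we verify the remaining hypotheses of Theorem~\ref{thm:main_explicit} for $\tilde G$. Since $Q_A$ and $Q_h$ have bounded support, $\|\hh\|_\infty\le C$ holds. Regularity gives $\|\AA_N\|_{\infty\to\infty}\le k\sup|\mathrm{supp}\,Q_A|$, and the rank-one correction $c\bone\bone^{\sT}/N$ adds exactly $|c|$ to each row $\ell^1$ norm. So condition \ref{cond:SG} holds with a uniform $C$. Condition \ref{cond:SW} holds with the common interval $I$ for all sufficiently large indices. Finitely many initial terms may violate it, but they do not affect limits, so we simply discard them.

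Theorem~\ref{thm:main_explicit} then says that $F(\tilde G_N)$ converges. Its two subsequences $F(G_N(c))$ and $F(G^{(r)}_N(c))$ therefore converge to the same limit, which is the claim. The potential obstacle is making sure the almost sure events are correctly combined, and that the unsimplified configuration-model multigraph still fits the framework. Multiple edges are harmless because they just add weights, and loops contribute diagonal entries, which are allowed since $\AA$ need not have zero diagonal. These affect only $O(1)$ vertices almost surely, so they change neither the homomorphism density limits nor the uniform norm bounds.
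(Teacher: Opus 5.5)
Your proposal is correct and matches the paper's argument. Like the paper, you interleave $G_N(c)$ with $G^{(r)}_N(c)$, get a common left limit from Lemma~\ref{lemma:left-convergence-locally-tree-like} and the interleaving remark, and apply Theorem~\ref{thm:main} to the interleaved sequence in $\cG_{I,C}$; your explicit checks of \ref{cond:SG}, \ref{cond:BF} and \ref{cond:SW} fill in details the paper leaves implicit (though the rank-one shift adds at most, not exactly, $|c|$ to each row $\ell^1$ norm, which is all you need).
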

In view of this result, in order to establish Corollaries
\ref{cor:spin-glass}-\ref{cor:Antiferromagnetic}, it will be
sufficient to check the spectral condition and work with random
$k$-regular graphs.

\subsection{Proof of Corollary
\ref{cor:spin-glass}}
\label{sec:proof-spin-glass}


We note that the spectral condition
$|\lambda_{\max}(\AA_N)|,|\lambda_{\min}(\AA_N)|\le\frac12-\delta$ holds by
\cite[Theorem 1.2]{mohanty2021explicit}, while
$|\lambda_{\max}(\AA_N^{(r)})|,|\lambda_{\min}(\AA_N^{(r)})|\le \frac12-\delta$
holds by \cite[Theorem 1.16]{mohanty2021explicit}.

 Applying Corollary \ref{coro:ReductionToRandomGraphs}, we can reduce
 to the case of random $k$-regular graphs, which can be
 treated by a second moment calculation. This derivation is standard,
 but we could not find a citable reference (the closest being
 \cite{montanari2006rigorous}), so we give a self-contained proof.

 Dropping for simplicity the superscript $(r)$, let
 $G_N=(\AA_N,\bzero)$ be a random $k$-regular graph on vertex set
 $[N]$, distributed according to the configuration model, with
 independent edge weights $A_{ij}\sim\Unif(\{-\beta,\beta\})$. Write
 $Z_N:=Z(G_N)$ and $L:=kN/2$ for the number of edges, and
\begin{align}\label{eq:PhiBetaDef}
    \phi(\beta) = \log 2+\frac{k}{2}\log\cosh\beta\, .
\end{align}
Computing the first moment is straightforward since
$\E\{e^{H(\bsigma)}\}=(\cosh\beta)^L$ for any $\bsigma\in\{+1,-1\}^N$,
whence
 \begin{align}\label{eq:spin-glass-first-moment}
 \E[Z_N] &= 2^N(\cosh\beta)^L= e^{N\phi(\beta)}\, .
 \end{align}
 For the second moment, it is useful to define the constrained
 partition function
 \begin{align}
    Z_N(q;\beta) :=\sum_{\substack{\bsigma_1,\bsigma_2\\
    d_H(\bsigma_1,\bsigma_2)=N-m}} e^{H(\bsigma_1)+H(\bsigma_2)}\, ,
 \end{align}
 $m=N(1+q)/2$ and $d_H(\,\cdot\, ,\,\cdot\, )$ denotes the Hamming
 distance.

The expectation of the constrained partition function is slightly more
involved. Denoting by $\Pair(2n):=(2n)!/(n!2^n)$ the number of
pairings of $2n$ objects, and assuming without loss of generality
$q\ge 0$ (since the result is symmetric in $q$), we have, with
$\om\equiv N-m$,
\begin{equation}
\label{eq:spin-glass-constrained-expectation}
\E\,Z_N(q;\beta) = \binom{N}{m}\frac{2^N}{\Pair(Nk)}
\tilde{\sum}_{\ell=0}^{\om k}\binom{mk}{\ell}\binom{\om k}{\ell} \ell!
\, \Pair(mk-\ell)\Pair(\om k-\ell) (\cosh 2\beta)^{L-\ell} \, .
\end{equation}
Here the sum $\tilde{\sum}$ is restricted to the values of $\ell$ such
that $mk-\ell$ is even. In this expression: $2^N\binom{N}{m}$ is the
number of configurations of the spins at Hamming distance $m$;
$1/\Pair(Nk)$ is the number of the probability of any specific graph
(in the configuration model); the index $\ell$ counts the number of
edges that connect a vertex $i$ such that $\sigma_{1,i}=\sigma_{2,i}$
to a vertex $j$ such that $\sigma_{1,j}\neq\sigma_{2,j}$;
$\binom{mk}{\ell}\binom{\om k}{\ell}$ is the number of ways of
choosing the half edges; $\ell! \, \Pair(mk-\ell)\Pair(\om k-\ell)$ is
the number of pairings; finally $(\cosh 2\beta)^{L-\ell}$ is the
expectation of $\exp(H(\bsigma_1)+H(\bsigma_2))$ over the edge signs.

After some straightforward algebra, we obtain
\begin{align}
    \E\,Z_N(q) &= 2^N\binom{N}{m} L!\binom{Nk}{mk}^{-1}
    \sum_{\ell=0}^{\om k}\frac{1}{\ell!}\frac{1}{((mk-\ell)/2)!}
    \frac{1}{((\om k-\ell)/2)!} 2^{\ell} (\cosh 2\beta)^{L-\ell} \\ &=
    2^N\binom{N}{m} \binom{Nk}{mk}^{-1} M(h,\beta)^{\frac{Nk}{2}}
    e^{-hmk} \P\left(\sum_{i=1}^{Nk/2}Y_i= mk\right)\, ,
\end{align}
where the last equality holds for any $h\ge0$, $M(h;\beta) =
(1+e^{2h})\cosh 2\beta +2e^h$, and the $Y_i$'s are i.i.d. random
variables with common distribution
\begin{align}
    \P(Y_i=0) = \frac{\cosh 2\beta}{M(h,\beta)} \, ,\;\;\;\;\;
    \P(Y_i=1) = \frac{2e^h}{M(h,\beta)}\, ,\;\;\;\;\; \P(Y_i=2) =
    \frac{e^{2h}\cosh 2\beta}{M(h,\beta)}\, .
\end{align}

Let us now choose $h=h_q\ge0$ so that $\E Y_i =\frac{2m}N  = 1+q$, which is possible under our assumption that $q\ge0\implies m\ge\frac N2$. The value of $h_q$ is determined as the unique nonnegative solution of the equation
\begin{equation}
1+q = \frac{M'(h_q;\beta)}{M(h_q;\beta)} =: 1+Q(h_q;\beta),
\end{equation}

By an application of the local central limit theorem for lattice random
variables, we obtain, for some constant $C>0$,
\begin{align}
\E\,Z_N(q) &= C\, N^{-1/2} \, e^{N\phi_2(q;\beta)}
\big(1+o_N(1)\big)\, ,
\end{align}
where letting $\entro(x):=-x\log x-(1-x)\log(1-x)$, we have
\begin{align}
\phi_2(q;\beta):= \log 2 -(k-1) \entro\!\left(\frac{1-q}{2}\right)
+\frac{k}{2} \log M(h_q,\beta)
-k\frac{1+q}{2}h_q.\label{eq:spin-glass-phi2-def}
\end{align}
%
It is easy to see that $\phi_2(0;\beta)=2\phi(\beta)$, and
$\phi_2(-q;\beta)=\phi_2(q;\beta)$. By the second moment method (see
e.g. \cite{montanari2006rigorous,montanari2024friendly}), we obtain
\begin{align}
    \phi_2(q;\beta)<\phi_2(0;\beta)\;\;\forall q\in (0,1] \;\;
    \Rightarrow\;\; \lim_{N\to\infty}\frac{1}{N}\log Z_N=\phi(\beta)\,
    ,
\end{align}
The proof is completed by showing that the condition
$\phi(q;\beta)<\phi(0;\beta)$ is satisfied provided we have
$(k-1)(\tanh\beta)^2<1$.

To prove this claim, note that, after some simple algebra
\begin{align}
Q(h;\beta) = \frac{c\sinh h}{1+c\cosh h}\, ,
\end{align}
where $c:=\cosh 2\beta$. We write for short $Q(h):=Q(h;\beta)$. The
function $h\mapsto Q(h)$ is strictly increasing from $0$ to $1$ as $h$
ranges from $0$ to $\infty$, and satisfies
\begin{equation}\label{eq:spin-glass-qh}
    \frac{1+Q(h)}{1-Q(h)}=\frac{1+c\,e^h}{1+c\, e^{-h}}\, .
\end{equation}
The envelope relation $1+q=M'(h;\beta)/M(h;\beta)$ and the
definition of $\phi_2$ give
\begin{equation}\label{eq:spin-glass-dphi2dq}
    \frac{\de }{\de q}\phi_2(q;\beta)
    =\frac{k-1}{2}\,\log\frac{1+q}{1-q}-\frac{k} {2}\,h_q\, .
\end{equation}
Since the map $h\mapsto Q(h)$ is a monotone bijection, we have
$\phi_2'(q)>0$ for some $q>0$ if and only if, for $h=Q^{-1}(q)$,
\begin{equation}\label{eq:spin-glass-f}
    f(h):=kh-(k-1)\log\frac{1+ce^h}{1+ce^{-h}}<0\, .
\end{equation}
We will show that instead $f(h)>0$ for all $h>0$. Differentiating
\eqref{eq:spin-glass-f} gives
\begin{align*}
    f'(h)&=k-(k-1)\frac{2c\,(c+\cosh h)}{1+c^2+2c\cosh h}\\
    &=k-2(k-1)c\cdot A(\cosh h)\, ,
\end{align*}
where $A(x):=(c+x)/(1+c^2+2cx)$. Since $A(x)$ is strictly decreasing
on $[1,\infty)$, $f'(h)$ is strictly increasing on $[0,\infty)$.
Therefore $f(h)>0$ for all $h>0$ as soon as $f'(0)>0$, and
\begin{align}
    f'(0)>0 &\Longleftrightarrow k(1+c)>2c(k-1) \Longleftrightarrow
    c<\frac{k}{k-2}\notag\\ &\Longleftrightarrow
    (k-1)(\tanh\beta)^2<1\, ,
\end{align}
using $c=\cosh 2\beta$. We conclude that
$\phi_2(q;\beta)<\phi_2(0;\beta)$ for every $q\in(0,1]$ as claimed.

\subsection{Proof of Corollary
\ref{cor:Constrained}}

With a slight abuse of notation, we define a generalized balanced
partition function and free energy
\begin{align}
    Z_{b,N}(c;\eps) = \sum_{\substack{\bsigma\in\{-1,1\}^N\\
    |\<\bsigma,\bone\>|\le N\eps}} e^{\<\bsigma,
    \AA_N(c)\bsigma\>/2}\, , \;\;\;\;\; F_{b,N}(c;\eps) =
    \frac{1}{N}\log Z_{b,N}(c;\eps)\, ,
\end{align}
as well as their unbalanced counterparts
\begin{align}
Z_{u,N}(c;\eps) = \sum_{\substack{\bsigma\in\{-1,1\}^N\\
|\<\bsigma,\bone\>|> N\eps}} e^{\<\bsigma, \AA_N(c)\bsigma\>/2}\, ,
\;\;\;\;\; F_{u,N}(c;\eps) = \frac{1}{N}\log Z_{u,N}(c;\eps)\, .\label{eq:ZuDef}
\end{align}
The overall free energy and partition functions are then
\begin{align}
Z_N(c) = Z_{b,N}(c;\eps) + Z_{u,N}(c;\eps)\, , \;\;\;\;\; F_N(c) =
\frac{1}{N}\log Z_N(c) \, .
\end{align}

We recall the definition of $\phi(\beta)$ from
Eq.~\eqref{eq:PhiBetaDef}, and state two lemmas which will be proved
in Sections \ref{sec:proof-free-energy-random-ferro} and
\ref{sec:proof-comparison-bounded-unbounded}, respectively.
\begin{lem}\label{lemma:free-energy-random-ferro}
Let $G_N^{(r)}=(\AA_N^{(r)},\bzero)$ be a sequence of random
$k$-regular graphs ($k\ge 3$) distributed according to the
configuration model, with constant edge weight $\beta\in\R$, and let
$F^{(r)}_N(\gamma)$ be the free energy density of the shifted graph
$G_N^{(r)}(\gamma)$. If $|\beta|<1/(4\sqrt{k-1})$, then there exists
$\eta=\eta(\beta,k)>0$ such that
\begin{align}
|\gamma-k\beta|<\eta\;\; \Rightarrow\;\;
\lim_{N\to\infty}F^{(r)}_{N}(\gamma)= \phi(\beta) \mbox{ a.s.}\, .
\end{align}
If $\beta\ge0$, then we may take $\eta=1/(4\sqrt{k-1})$.
\end{lem}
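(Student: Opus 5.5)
We will work directly with the configuration model and use the first and second moment methods, organized by magnetization. The starting point is the rewriting
\begin{align*}
\frac12\<\bsigma,\AA^{(r)}_N(\gamma)\bsigma\> = \beta\sum_{\{i,j\}\in E}\sigma_i\sigma_j - \frac{\gamma N}{2}\,m(\bsigma)^2\, ,\qquad m(\bsigma):=\frac1N\<\bsigma,\bone\>\, .
\end{align*}
It gives $Z(G^{(r)}_N(\gamma))=\sum_{m} e^{-\gamma N m^2/2} Z_N(m)$, where $Z_N(m)$ is the ferromagnetic partition function restricted to configurations with magnetization $m$. The sum has only $N+1$ terms, so up to polynomial factors everything reduces to controlling the exponential rates of the individual $Z_N(m)$.

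\textbf{First moment (upper bound).} We compute $\E Z_N(m)$ exactly in the configuration model, by the same pairing count as in \eqref{eq:spin-glass-constrained-expectation}. Here $\ell$ counts the edges joining a $+$ vertex to a $-$ vertex, and each such edge carries weight $e^{-\beta}$ while the other edges carry $e^{\beta}$. Tilting by an auxiliary field and applying the local CLT, exactly as in Section~\ref{sec:proof-spin-glass}, yields $\E Z_N(m)=e^{N\psi_1(m;\beta)+O(\log N)}$ with an explicit rate function $\psi_1$, and $\psi_1(0;\beta)=\phi(\beta)$. The first analytic claim to establish is
\begin{align*}
\psi(m;\beta,\gamma):=\psi_1(m;\beta)-\frac{\gamma}{2}m^2 < \psi(0;\beta,\gamma)=\phi(\beta)\qquad \text{for all } m\neq 0\, ,
\end{align*}
whenever $|\gamma-k\beta|<\eta$. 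Near $m=0$, the shift $\gamma\approx k\beta$ exactly cancels the ferromagnetic curvature gain $\approx k\beta m^2/2$ from the edges. The residual curvature is then governed by the entropy term and by the smallness of $\beta$, and this is where $|\beta|<1/(4\sqrt{k-1})$ enters. Away from $0$ we would argue as in the spin-glass case: differentiate in $m$ using the envelope relation for the tilt, reduce to a one-variable inequality in the field $h$, and show that its derivative is monotone. Given this inequality, Markov's inequality and Borel--Cantelli give $\limsup F^{(r)}_N(\gamma)\le\phi(\beta)$ almost surely.

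\textbf{Second moment and concentration (lower bound).} It suffices to lower bound $Z_N(0)$, or the sum of $Z_N(m)$ over $|m|\le N^{-1/2}$, since the factor $e^{-\gamma N m^2/2}$ costs nothing there. We compute $\E[Z_N(0)^2]$ over pairs of balanced configurations with overlap $q$. This gives a rate $\phi_2(q;\beta)$ of the same form as \eqref{eq:spin-glass-phi2-def}, now with a three-valued edge statistic weighted by $e^{\pm2\beta}$ and $1$. We need $\phi_2(q)<\phi_2(0)=2\phi(\beta)$ for $q\ne0$, which again is a one-dimensional calculus check in the small-$\beta$ regime. Paley--Zygmund then gives $\P\big(Z_N(0)\ge c\,\E Z_N(0)\big)\ge c'>0$. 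To upgrade this to an almost sure statement we use the concentration of $\log Z$ in the configuration model: an edge switching changes $\log Z$ by $O(|\beta|)$, while the $\gamma$ term is deterministic given $\bsigma$. The edge-exposure martingale therefore gives deviations of order $\sqrt N$ with subgaussian tails. Together with the positive-probability lower bound, this yields $\liminf F^{(r)}_N(\gamma)\ge\phi(\beta)$ almost surely.

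The main obstacle is the pair of variational inequalities, above all the first-moment one for $\psi(m;\beta,\gamma)$ uniformly over the window $|\gamma-k\beta|<\eta$. The quadratic term fixes the local picture, but global control for large $|m|$ requires tracking the tilt $h_m$ explicitly. We would first treat the endpoint $\gamma=k\beta-\eta$, which is the worst case for $\beta\ge0$ since $\psi$ is decreasing in $\gamma$. There we would try to prove the monotonicity of the derivative of the reduced function $f(h)$, as in the proof for $\phi_2$. If that fails, we would split into $|m|$ small, handled by a Taylor bound with explicit third-derivative control, and $|m|$ large, handled by the crude bound $\psi_1(m)\le \entro((1+m)/2)+\tfrac{k}{2}|\beta|$.
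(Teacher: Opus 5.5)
The upper bound follows the paper's architecture: decomposition by magnetization, a tilted first moment, then Markov and Borel--Cantelli. The genuine gap is that the step carrying all the content is only planned. You never prove $\sup_m\psi(m;\beta,\gamma)\le\phi(\beta)$ on the window, and you never address the explicit claim $\eta=1/(4\sqrt{k-1})$ for $\beta\ge0$; this is essentially the whole of the paper's proof.

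Your reduction to the endpoint $\gamma=k\beta-\eta$ is fine, since only $\le$ is needed and $\psi$ decreases in $\gamma$. After the envelope relation, the task is to show $g(h)>0$ for $h>0$, where $g(0)=0$, $g'$ is given by \eqref{eq:ferro-gprime-alt}, $c=e^{2\beta}$ and $x=\cosh h$.

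\textbf{Monotonicity is not available.} In the spin-glass case you want to borrow from, $c=\cosh 2\beta>1$ and there is no $\gamma$ term. Here, for $\beta\ge0$ and $\gamma>0$, the term $-(k-1)(c^2-1)/(1+c^2+2cx)$ increases in $x$ while $2\gamma c(x+c)/(1+cx)^2$ decreases. The paper instead bounds $g'$ directly:
\begin{itemize}
\item At $\gamma=k\beta$ it proves $g'>3/4$, using $1+c^2+2cx\ge 2c(x+1)$, $(1+cx)^2\le c^2(x+1)^2$, and Taylor bounds on $e^{-2\beta}$ and $\sinh 2\beta$. This is where $|\beta|<1/(4\sqrt{k-1})$ is used.
\item It then absorbs $\gamma=k\beta+\delta$ through $|2\delta c(x+c)/(1+cx)^2|\le 2|\delta|$. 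This is exactly what produces $\eta=1/(4\sqrt{k-1})$.
\end{itemize}
For $\beta<0$ the paper does argue by monotonicity, but its claim that $A(x)=(x+c)/(1+c^2+2cx)$ is decreasing is false when $c<1$, since $A'(x)=(1-c^2)/(1+c^2+2cx)^2$. For example, with $k=3$ and $\beta=-0.17$, $g'$ dips below $g'(0)$ before returning to $1$. Reversing the two inequalities above, $1+c^2+2cx\le 2(1+x)$ and $c(x+c)/(1+cx)^2\le 1/(c(1+x))$, gives a direct positive lower bound on $g'$ that does close that case.

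\textbf{Your fallback does not evidently close the gap either.} The crude bound $\entro((1+m)/2)+\frac k2|\beta|-\frac\gamma2 m^2$ already exceeds $\phi(\beta)$ at $m=1/2$ for $k=3$ and $\beta$ near $1/(4\sqrt2)$. So your Taylor regime would have to reach that far with explicit constants. Also, the local picture does not use the hypothesis on $\beta$ at all: $\partial_m^2\psi(0;\beta,k\beta)=-1-\frac k2(e^{-2\beta}-1+2\beta)\le -1$ for every $\beta$. The restriction on $\beta$ is a global one, not the near-$m=0$ effect you describe.

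\textbf{Your lower bound takes a genuinely different route, and it works.} The paper cites Gheissari--Perkins--Yap for $\beta\ge0$ and only sketches a second moment for $\beta<0$. You run a single second-moment computation for $Z_N(0)$ covering both signs and upgrade it with switching concentration, which is more self-contained.

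However, ``of the same form'' hides a reduction you should state, because the pair problem has four vertex types. The balance constraints force the pair profile $\nu_q(s_1,s_2)=\frac14(1+q s_1s_2)$. Optimizing the edge profile inside each class $(\tau,\tau')$, with $\tau=s_1s_2$, gives effective weights $\frac14\sum e^{\beta(s_1s_1'+s_2s_2')}$, equal to $\cosh 2\beta$ within a class and $1$ across classes. So your exponent is exactly the spin-glass $\phi_2(q;\beta)$ of \eqref{eq:spin-glass-phi2-def}. The analysis there then gives $\phi_2(q)<\phi_2(0)$, because $(k-1)\tanh^2\beta<1/16$ under your hypothesis.

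Given concentration, you also only need $\E[Z_N(0)^2]\le e^{o(N)}(\E Z_N(0))^2$, rather than the $O(1)$ ratio that Paley--Zygmund with a fixed constant would require.
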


\begin{lem}
\label{lemma:ComparisonBoundedUnnounded}
Let $G_N=(\AA_N,\bzero)$ be a sequence of weighted $k$-regular graphs, converging locally to the $k$-regular tree,
with edge weights $A_{ij}=\beta\in \R$ (either positive or negative)
for all $(i,j)\in E(G_N)$. If
$\lambda_{\max}(\AA_N^{\perp}),\lambda_{\min} (\AA_N^{\perp})\in I$ for some $N$-independent interval $I$ of length less than $1$,
then for any sequence with $\eps_N\to 0$ slow enough, we
have:
\begin{align}
\lim_{N\to\infty}\left| F_{b,N}(0;\eps_N) - F_{N}(k\beta) \right| =0
\, .
\end{align}
\end{lem}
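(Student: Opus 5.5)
The plan is to compare both quantities with the Ising model whose coupling matrix is $\AA_N^{\perp}$. By $k$-regularity, $\AA_N\bone=k\beta\bone$, so $\AA_N=\AA_N^{\perp}+\frac{k\beta}{N}\bone\bone^{\sT}$ exactly. It follows that $\AA_N(k\beta)=\AA_N^{\perp}$, and hence $F_N(k\beta)=F(\AA_N^{\perp},\bzero)$. For $\bsigma$ with $m:=\<\bsigma,\bone\>$ we then have
\begin{align*}
\frac12\<\bsigma,\AA_N\bsigma\>=\frac12\<\bsigma,\AA_N^{\perp}\bsigma\>+\frac{k\beta m^2}{2N}.
\end{align*}
On the balanced set $|m|\le N\eps_N$ the last term is at most $|k\beta|N\eps_N^2/2$ in absolute value. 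Consequently
\begin{align*}
\bigl|F_{b,N}(0;\eps_N)-\tfrac1N\log Z^{\perp}_{b,N}(\eps_N)\bigr|\le |k\beta|\eps_N^2/2,
\end{align*}
where $Z^{\perp}_{b,N}(\eps)$ is the balanced partition function with couplings $\AA_N^{\perp}$. It therefore suffices to show $\frac1N\log Z^{\perp}_{b,N}(\eps_N)-F(\AA_N^{\perp},\bzero)\to0$. The upper bound $Z^{\perp}_b\le Z(\AA_N^{\perp},\bzero)$ is trivial, so only a lower bound is needed. That lower bound is exactly $\mu^{\perp}(|m|\le N\eps_N)\ge e^{-o(N)}$, where $\mu^{\perp}$ is the Ising measure for $(\AA_N^{\perp},\bzero)$.

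I will in fact show $\mu^{\perp}(|m|>N\eps_N)\to0$ via Chebyshev, using a variance bound $\Var_{\mu^{\perp}}(\<\bsigma,\bone\>)\le K N$ with $K$ independent of $N$. Since $\mu^{\perp}$ is invariant under $\bsigma\mapsto-\bsigma$, the mean of $m$ is $0$. Chebyshev then gives $\mu^{\perp}(|m|>N\eps_N)\le K/(N\eps_N^2)$, which tends to $0$ as long as $\eps_N\sqrt N\to\infty$. This is the meaning of ``$\eps_N\to0$ slow enough'', and the local convergence hypothesis is not even needed for this step.

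For the variance bound I reuse the decomposition of Section~\ref{subs:renormalize}. Shifting $\AA_N^{\perp}$ by a multiple of $\II$ does not change $\mu^{\perp}$, so I may assume its spectrum lies in $[-1+\lambda,-\lambda]$. Then $\mu^{\perp}$ is the $\bsigma$-marginal of a joint law on $(\bsigma,\bphi)$ with the following two properties.
\begin{itemize}
\item Given $\bphi$, the spins $\bsigma$ are independent with $\E[\sigma_i\mid\bphi]=\tanh(c\varphi_i)$.
\item The $\bphi$-marginal has density proportional to $\exp(-\frac12\<\bphi,\BB\bphi\>-V_{\bzero}(\bphi))$, where $\BB\succeq\bzero$ and $V$ is $(c-c^2)$-good.
\end{itemize}
The second property makes the $\bphi$-marginal $\alpha$-strongly log-concave with $\alpha=c-c^2$. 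By the law of total variance,
\begin{align*}
\Var(m)=\E\bigl[\Var(m\mid\bphi)\bigr]+\Var\Bigl(\sum_i\tanh(c\varphi_i)\Bigr)\le N+\frac{c^2N}{\alpha}.
\end{align*}
The first term is at most $N$ by conditional independence. The second follows from the Poincaré inequality of Bakry--\'Emery (as used in \eqref{eqn:F''bound}), since $\bphi\mapsto\sum_i\tanh(c\varphi_i)$ has gradient norm at most $c\sqrt N$.

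The main point to check carefully is this last step: that the variance bound for the magnetization follows uniformly in $N$ from the spectral width condition through the log-concave representation. Once it holds, the three pieces combine to give
\begin{align*}
\bigl|F_{b,N}(0;\eps_N)-F_N(k\beta)\bigr|\le \frac{|k\beta|\eps_N^2}{2}-\frac1N\log\Bigl(1-\frac{K}{N\eps_N^2}\Bigr)\longrightarrow0
\end{align*}
for, e.g., $\eps_N=N^{-1/4}$.
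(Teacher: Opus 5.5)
Your proof is correct, and it takes a genuinely different route from the paper's.

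\textbf{The paper's route.} The paper obtains $F_{b,N}(k\beta;\eps_N)\approx F_N(k\beta)$ by an exponential-tilt argument:
\begin{itemize}
\item It first uses Alon--Boppana to get $|\beta|<1/(4\sqrt{k-1})$.
\item It observes that $\AA_N(k\beta-\eta_0)=\AA_N^{\perp}+\eta_0\bone\bone^{\sT}/N$ still satisfies the spectral condition.
\item It invokes Theorem~\ref{thm:main}, Lemma~\ref{lemma:left-convergence-locally-tree-like} and the random-regular-graph computation of Lemma~\ref{lemma:free-energy-random-ferro}. These show that $F_N(k\beta)$ and $F_N(k\beta-\eta_0)$ both converge to $\phi(\beta)$.
\item The bound $Z_{u,N}(k\beta;\eps_N)\le e^{-N\eta_0\eps_N^2/2}Z_{u,N}(k\beta-\eta_0;\eps_N)$ then makes the unbalanced part negligible. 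This only works once $N\eps_N^2$ dominates the unquantified $o(N)$ error in those limits.
\end{itemize}
The final step, comparing $F_{b,N}(0;\eps_N)$ with $F_{b,N}(k\beta;\eps_N)$ at cost $|k\beta|\eps_N^2/2$, is the same as yours.

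\textbf{Your route.} You use the exact identity $\AA_N(k\beta)=\AA_N^{\perp}$. You then bound the magnetization variance directly through the decomposition of Section~\ref{subs:renormalize}, via the law of total variance plus the Bakry--\'Emery Poincar\'e inequality for the strongly log-concave $\bphi$-marginal, and finish with Chebyshev.

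\textbf{Comparison.}
\begin{itemize}
\item Your argument is self-contained. It uses neither Theorem~\ref{thm:main}, nor local convergence, nor Alon--Boppana, nor any random-graph input.
\item It gives an explicit admissible range, namely $\eps_N\to0$ with $N\eps_N^2\to\infty$, instead of an existence statement.
\item Your argument actually shows $\Var_{\mu^{\perp}}(\<\vv,\bsigma\>)\le(1+c^2/\alpha)\|\vv\|^2$ for every $\vv$. This is a uniform operator-norm bound on the covariance under the spectral width condition.
\item The paper's route buys exponential, rather than polynomial, suppression of the unbalanced mass. It also identifies $\lim F_N(k\beta)=\phi(\beta)$ along the way. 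That identification is established separately in the corollaries anyway, so it is not needed for the lemma itself.
\end{itemize}
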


We first note that, by the Alon-Boppana inequality for locally
tree-like graphs \cite{McKay1981Expected}, the spectral condition
implies $4\beta\sqrt{k-1}<1$, hence $\beta<1/(4\sqrt{k-1})$.
In particular, the hypothesis of Lemma
\ref{lemma:free-energy-random-ferro} holds. Hence the assumptions of
both Lemma \ref{lemma:free-energy-random-ferro} and Lemma
\ref{lemma:ComparisonBoundedUnnounded} hold.

Applying Theorem \ref{thm:main} and Lemma
\ref{lemma:free-energy-random-ferro}, we obtain:
\begin{align}
    \lim_{N\to\infty}F_{N}(k\beta)=
    \lim_{N\to\infty}F^{(r)}_{N}(k\beta)= \phi(\beta)\, .
 \end{align}
 Hence the claim of the corollary follows from Lemma
 \ref{lemma:ComparisonBoundedUnnounded}.

\subsubsection{Proof of Lemma
\ref{lemma:free-energy-random-ferro}}
\label{sec:proof-free-energy-random-ferro}

We will separately prove a lower bound and an upper bound. Throughout,
assume $|\beta|<1/(4\sqrt{k-1})$.

\paragraph{Lower bound.} For simplicity of
exposition we assume $N$ even, but the argument is the same for odd
$N$. For any $\gamma\in\R$ and $\eps_N =1/N$
\begin{align}
 F^{(r)}_{N}(\gamma)&\ge F^{(r)}_{b,N}(\gamma;\eps_N)
 =F^{(r)}_{b,N}(0;\eps_N)\, ,
\end{align}
and therefore it is sufficient to lower bound
$F^{(r)}_{b,N}(0;\eps_N)$. If $\beta\ge 0$, this is the free energy of
the ferromagnetic model with magnetization constrained to be equal to
zero, and \cite[Theorem 1.1]{GheissariPerkinsYap2025} implies
\begin{align}
    \lim_{N\to\infty}F^{(r)}_{b,N}(0;\eps_N) =\phi(\beta)\, .
\end{align}
If $\beta<0$, the same limit holds by a second-moment argument
analogous to that of Section~\ref{sec:proof-spin-glass}. Indeed, the
rate-function analysis in the upper bound below shows that
$q\mapsto\psi(q;\beta,0)$ is uniquely maximized at $q=0$, and
$|\beta|<1/(4\sqrt{k-1})$ yields
$(k-1)(\tanh\beta)^2\le(k-1)\beta^2<1/16<1$, so the second-moment
method applies upon replacing $\cosh(2\beta_{\sSG})$ by $e^{2\beta}$
in Eq.~\eqref{eq:spin-glass-constrained-expectation}. (The resulting
formulae for the first and second moments are valid for $\beta$ of
either sign.) We omit the routine details.

\paragraph{Upper bound.}
We obtain an upper bound by computing the expected partition function.
It is convenient to write $Z^{(r)}_{N}(q;\gamma)$ for the partition
function restricted to configurations with $\<\bsigma,\bone\>= Nq$. We
have, letting $\om=N-m$, $L=Nk/2$,
\begin{align}
    \E Z^{(r)}_{N}(q;0)&= \frac{1}{\Pair(Nk)} \binom{N}{m}
    \sum_{\ell=0}^{k m\wedge k\om}\binom{km}{\ell} \binom{k\om}{\ell}
    \ell! \Pair(km-\ell)\Pair(k\om-\ell) e^{\beta(L-2\ell)}\, ,\\ &m =
    \frac{N}{2}(1+q)\, ,
\end{align}
where the sum is restricted to the values of $\ell$ such that
$km-\ell$ is even. This expression is valid for $\beta\in\R$ of either
sign. When $\beta\ge 0$, it matches the expectation in Section
\ref{sec:proof-spin-glass} up to a change of variables: comparing with
Eq.~\eqref{eq:spin-glass-constrained-expectation}, if one replaces
$\cosh(2\beta_{\sSG})$ by $e^{2\beta}$ (we use the subscript $\sSG$ to
distinguish the two parameters), then the spin-glass expectation
exceeds the ferromagnetic one by the deterministic factor $2^Ne^{\beta
L}$. In all cases, the same saddle-point / local CLT analysis yields
\begin{align}
    \E Z^{(r)}_{N}(q;0)&= C\, N^{-1/2} \, e^{N\psi(q;\beta,0)}
    \big(1+o_N(1)\big)\, ,
\end{align}
where, for $\beta\ge 0$, one has
$\psi(q;\beta,0)=\phi_2(q;\beta_{\sSG})-\log 2-\beta k/2$, with
$\phi_2$ defined in Eq.~\eqref{eq:spin-glass-phi2-def} and
$\beta_{\sSG}$ determined by $\cosh(2\beta_{\sSG})=e^{2\beta}$. Since
\begin{align}
Z^{(r)}_{N}(q;\gamma)= e^{-N\gamma q^2/2}Z^{(r)}_{N}(q;0)\, ,
\end{align}
we get
\begin{align}
    \E Z^{(r)}_{N}(q;\gamma)&= C\, N^{-1/2} \,
    e^{N\psi(q;\beta,\gamma)} \big(1+o_N(1)\big)\, .
\end{align}
Using Eq.~\eqref{eq:spin-glass-phi2-def} (or the direct saddle-point
analysis when $\beta<0$), we obtain
\begin{align}
\psi(q;\beta,\gamma) &= -(k-1) \entro\!\left(\frac{1-q}{2}\right)
+\frac{k}{2} \log \tM(h_q,\beta) - \frac{k}{2}h_qq-\frac{1}{2}\gamma
q^2\, ,\\ \tM(h,\beta) &:= 2e^{\beta} \cosh h+2e^{-\beta}\, ,
\end{align}
where $h_q$ satisfies
\begin{equation}\label{eq:ferro-hq}
     q = \frac{e^{2\beta}\sinh h_q}{1+e^{2\beta}\cosh h_q}\, .
\end{equation}
We have $\psi(0;\beta,\gamma)= \phi(\beta)$ and
$\psi(-q;\beta,\gamma)=\psi(q;\beta,\gamma)$. We claim that there
exists $\eta=\eta(\beta,k)>0$ such that if $|\gamma-k\beta|<\eta$,
then $\partial_q\psi(q;\beta,\gamma)<0$ for all $q\in(0,1]$. This
implies
\begin{align}
    \E Z^{(r)}_{N}(\gamma)\le C N^{1/2} e^{N\phi(\beta)}\, .
\end{align}
Hence, for any $\eps>0$, Markov's inequality yields
$\P\bigl(F^{(r)}_{N}(\gamma)\ge \phi(\beta)+\eps\bigr) \le C
N^{1/2}e^{-N\eps}$, which is summable. Borel--Cantelli therefore gives
$\limsup_{N\to\infty} F^{(r)}_{N}(\gamma) \le\phi(\beta)$ almost
surely, completing the upper bound.

It remains to prove the claim on $\partial_q\psi$. Using
Eq.~\eqref{eq:ferro-hq}, we have
\begin{align}
    \frac{\de}{\de q}\psi(q;\beta,\gamma) &=
    \frac{k-1}{2}\log\frac{1+q}{1-q}-\frac{k}{2} h_q-\gamma q\, .
\end{align}
Set $c:=e^{2\beta}$ and define
\begin{equation}
    Q(h):=\frac{c\sinh h}{1+c\cosh h}\, .
\end{equation}
Then $q=Q(h)$ defines a strictly increasing bijection from $\R$ onto
$(-1,1)$, and
\begin{equation}\label{eq:ferro-qh}
    \frac{1+q}{1-q}=\frac{1+c\,e^{h}}{1+c\,e^{-h}} \, .
\end{equation}
By symmetry it is enough to treat $q\in(0,1]$, for which
$h=Q^{-1}(q)\ge 0$. Therefore $\partial_q\psi(q;\beta,\gamma)<0$ if
and only if, for $h=Q^{-1}(q)$,
\begin{equation}\label{eq:ferro-g}
    g(h):=kh-(k-1)\log\frac{1+ce^h}{1+ce^{-h}} +2\gamma Q(h)>0\, .
\end{equation}
We have $g(0)=0$, so it is enough to show $g'(h)>0$ for all $h\ge 0$.
Differentiating \eqref{eq:ferro-g} and writing $x:=\cosh h\ge 1$ gives
\begin{equation}\label{eq:ferro-gprime}
    g'(h)=k-2c(k-1)A(x)+2\gamma\, c\,B(x)\, ,
\end{equation}
where
\begin{equation}
    A(x)=\frac{x+c}{1+c^2+2c x}\, , \qquad B(x)=\frac{x+c}{(1+cx)^2}\,
    .
\end{equation}
After some simple algebra, we obtain
\begin{equation}\label{eq:ferro-gprime-alt}
g'(h) = 1-(k-1)\frac{c^2-1}{1+c^2+2cx} +2\gamma\, c\,
\frac{x+c}{(1+cx)^2}\, .
\end{equation}
Write $\gamma=k\beta+\delta$. We first treat the case $\delta=0$, and
then absorb a small perturbation $\delta$.

\paragraph*{The case $\gamma=k\beta$ and $\beta\ge
0$.} Here $c=e^{2\beta}\ge 1$. Since $B$ is decreasing on $[1,\infty)$
with $B(1)=1/(1+c)$, and $1+c^2+2cx\ge 2c(x+1)$ together with
$(1+cx)^2\le c^2(x+1)^2$,
\begin{align}
(k-1)\frac{c^2-1}{1+c^2+2cx} &\le (k-1)\frac{c^2-1}{2c(x+1)}\, , \\
2k\beta\, c\, \frac{x+c}{(1+cx)^2} &\ge \frac{2k\beta}{c(x+1)}\, .
\end{align}
Substituting into Eq.~\eqref{eq:ferro-gprime-alt} we obtain
\begin{align}
g'(h) &\ge 1+\frac{1}{c(x+1)}\Bigl(2k\beta-\frac{(k-1)(c^2-1)
}{2}\Bigr) = 1+\frac{\Theta}{c(x+1)}\, ,
\end{align}
where $\Theta:=2k\beta-(k-1)(c^2-1)/2$. If $\Theta\ge 0$ then
$g'(h)\ge 1>0$. We can therefore assume that $\Theta<0$. Since $x+1\ge
2$, we have $\Theta/(c(x+1))\ge \Theta/(2c)$, and therefore
\begin{align}
g'(h) &\ge 1+\frac{\Theta}{2c} = 1+k\beta\,
e^{-2\beta}-\frac{k-1}{2}\sinh(2\beta)\, ,
\end{align}
where we used $(c^2-1)/c=c-c^{-1}=2\sinh(2\beta)$. Using
$e^{-2\beta}\ge 1-2\beta$ and $\sinh y\le y+(y^3/6)\cosh y$ with
$y=2\beta$,
\begin{align}
1+k\beta e^{-2\beta}-\frac{k-1}{2}\sinh(2\beta) &\ge
1+k\beta(1-2\beta)-(k-1)\beta-\frac{2(k-1)}{3} \beta^3\cosh(2\beta)
\nonumber\\ &= 1+\beta-2k\beta^2-\frac{2(k-1)}{3}
\beta^3\cosh(2\beta)\, .
\end{align}
Since $\beta<1/(4\sqrt{k-1})$, we have $2k\beta^2<k/(8(k-1))\le 3/16$
(using $k\ge 3$) and $\beta<1/4$, hence $\cosh(2\beta)<\cosh(1/2)\le
2$ and
\begin{equation}
\frac{2(k-1)}{3}\beta^3\cosh(2\beta) < \frac{4(k-1)}{3}\,\beta^3 <
\frac{4(k-1)}{3}\cdot\frac{1}{64\,(k-1)^{3/2}}
=\frac{1}{48\sqrt{k-1}}\le\frac{1}{48\sqrt{2}}\, .
\end{equation}
Thus
\begin{equation}
1+k\beta e^{-2\beta}-\frac{k-1}{2}\sinh(2\beta) >
1-\frac{3}{16}-\frac{1}{48\sqrt{2}} > \frac34 >0\, .
\end{equation}
This proves $g'(h)>0$ for all $h\ge 0$ when $\gamma=k\beta$ and
$\beta\ge 0$.

\paragraph*{The case $\gamma=k\beta$ and
$\beta<0$.} Write $\beta=-\alpha$ with $0<\alpha<1/(4\sqrt{k-1})$, so
$c=e^{-2\alpha}\in(e^{-1/2},1)$. In particular $c>1/2$, and the same
computation as above shows that $B$ is decreasing on $[1,\infty)$.
Likewise $A$ is decreasing on $[1,\infty)$. From
\eqref{eq:ferro-gprime} with $\gamma=-k\alpha$,
\begin{equation}
g'(h)=k-2c(k-1)A(x)-2k\alpha\, c\,B(x)\, .
\end{equation}
Both $A$ and $B$ are decreasing, so $g'(h)$ is increasing in $h$.
Hence it is enough to check $g'(0)>0$. Using
$(c-1)/(c+1)=\tanh\beta=-\tanh\alpha$ and
$c/(1+c)=1/(1+e^{2\alpha})=(1-\tanh\alpha)/2$, we get
\begin{align}
g'(0) &= 1-(k-1)\frac{c-1}{c+1}+2k\beta\frac{c}{1+c} =
1+(k-1)\tanh\alpha-k\alpha(1-\tanh\alpha) \nonumber\\ &=
1-k\alpha+\bigl(k(1+\alpha)-1\bigr)\tanh\alpha\, .
\end{align}
Since $\tanh\alpha\ge\alpha-\alpha^3/3$,
\begin{align}
g'(0) &\ge 1-k\alpha+\bigl(k(1+\alpha)-1\bigr)
\bigl(\alpha-\alpha^3/3\bigr) =
1-\alpha+k\alpha^2-\frac{k(1+\alpha)-1}{3} \alpha^3\, .
\end{align}
Using $\alpha<1/(4\sqrt{k-1})\le 1/(4\sqrt{2})$ and $k\ge 3$, we have
$1-\alpha>1-1/(4\sqrt{2})>3/4$, while $k\alpha^2\le k/(16(k-1))\le
3/32$ and the cubic remainder is at most $1/(48\sqrt{2})$, whence
$g'(0)>1/2>0$. Thus $g'(h)\ge g'(0)>0$ for all $h\ge 0$.

\paragraph*{Perturbing $\gamma$.}
From \eqref{eq:ferro-gprime},
\begin{equation}
\Bigl|g'(h)\big|_{\gamma=k\beta+\delta}-g'(h)\big|
_{\gamma=k\beta}\Bigr| = \bigl|2\delta\, c\, B(x)\bigr| \le
2|\delta|\,\frac{c}{1+c}\le 2|\delta|\, .
\end{equation}
If $\beta\ge 0$, the estimates above give
$g'(h)\big|_{\gamma=k\beta}>3/4$ for all $h\ge 0$, so
$g'(h)>3/4-2|\delta|$ whenever $\gamma=k\beta+\delta$. Taking
$\eta:=1/(4\sqrt{k-1})$ yields $2\eta=1/(2\sqrt{k-1})\le
1/(2\sqrt{2})<1/2$, hence $g'(h)>1/4>0$. If $\beta<0$, we have
$g'(h)\big|_{\gamma=k\beta}\ge g'(0)>1/2$, so the same bound gives
$g'(h)>0$ whenever $|\delta|<\eta:=1/8$. In either case,
$\partial_q\psi(q;\beta,\gamma)<0$ for all $q\in(0,1]$ as soon as
$|\gamma-k\beta|<\eta$, as claimed.

\subsubsection{Proof of Lemma
\ref{lemma:ComparisonBoundedUnnounded}}
\label{sec:proof-comparison-bounded-unbounded}

Under the assumptions of the lemma, we have
$|\lambda_{\max}(\AA_N^{\perp})|,|\lambda_{\min} (\AA_N^{\perp})|\ge
2|\beta|\sqrt{k-1}-o_N(1)$ by the Alon-Boppana inequality for locally
tree-like graphs \cite{McKay1981Expected}. Therefore, we have
$|\beta|<1/(4\sqrt{k-1})$.

For any $\eta_0\in (0,1/(4\sqrt{k-1})$, set $\gamma=k\beta-\eta_0$ and
note that $\AA_N(\gamma) =\AA_N^{\perp}+\eta_0\bone\bone^{\sT}/N$, and
therefore $\AA_N$ also satisfies the spectral condition of Theorem~\ref{thm:main}. Applying it along with Lemma~\ref{lemma:left-convergence-locally-tree-like} and Lemma
\ref{lemma:free-energy-random-ferro}, we have
%
%
%
\begin{align}
\lim_{N\to\infty}F_{N}(\gamma)= \lim_{N\to\infty}F^{(r)}_{N}(\gamma)=
\phi(\beta)\, .
\end{align}
As a consequence, for any $\eps_N>0$, we have
\begin{align*}
Z_{u,N}(k\beta;\eps_N)&\le \sum_{\substack{\bsigma\in\{-1,1\}^N\\
|\<\bsigma,\bone\>|> N\eps_N}} e^{\<\bsigma, \AA_N(k\beta)\bsigma\>/2}
e^{\eta_0 \<\bsigma,\bone\>^2/2N-N\eta_0\eps_N^2/2}\\ &\le
Z_{u,N}(k\beta-\eta_0;\eps_N) e^{-N\eta_0\eps_N^2/2} \\ &\le
\exp\Big\{N\phi(\beta)-\frac{N}{2} \eta_0\eps_N^2+o(N)\Big\}\, .
\end{align*}
Hence, there exists a sequence $\eps_N\to 0$ such that
\begin{align}
    \lim_{N\to\infty}\frac{Z_{b,N}(k\beta;\eps_N)} {Z_N(k\beta)}= 1-
    \lim_{N\to\infty}\frac{Z_{u,N}(k\beta;\eps_N)} {Z_N(k\beta)} =1\,
    .
\end{align}
Therefore
\begin{align}
    \lim_{N\to\infty}\big| F_{b,N}(k\beta;\eps_N) - F_{N}(k\beta)
    \big| =0\, .\label{eq:comparison-bounded-unbounded-1}
\end{align}
Finally we notice that
\begin{align}
    \lim_{N\to\infty} \big| F_{b,N}(k\beta;\eps_N) - F_{b,N}(0;\eps_N)
    \big| \le \lim_{N\to\infty} \frac{1}{2}k|\beta|\eps_N^2= 0 \,
    .\label{eq:comparison-bounded-unbounded-2}
\end{align}
The proof is completed by combining
Eqs.~\eqref{eq:comparison-bounded-unbounded-1} and
\eqref{eq:comparison-bounded-unbounded-2}.

\subsection{Proof of Corollary
\ref{cor:Antiferromagnetic}}

By Lemma \ref{lemma:ComparisonBoundedUnnounded}, and Lemma
\ref{lemma:free-energy-random-ferro}, for all $\eps_N$ vanishing slow
enough, we have 
\begin{align}
    \lim_{N\to\infty}F_{b,N}(0;\eps_N)=
    \lim_{N\to\infty}F_{N}(k\beta)= \phi(\beta)\, .\label{eq:Balanced-Not-Balanced}
\end{align}
On the other hand,  recalling the definition of $Z_{u,N}$ from Eq.~\eqref{eq:ZuDef},
\begin{align*}
    Z_{u,N}(0;\eps_N&)\le e^{Nk\beta\eps_N^2/2} Z_{u,N}(k\beta;\eps_N)
    \\ & \le e^{Nk\beta\eps_N^2/2} Z_N(k\beta) \\ & \le
    e^{-Nk|\beta|\eps_N^2/2+o(N)} Z_{b,N}(0;\eps_N) \, ,
\end{align*}
where in the last step we used Eq.~\eqref{eq:Balanced-Not-Balanced}.
Therefore
\begin{align*}
\lim_{N\to\infty}F_N(0) &= \lim_{N\to\infty}F_{b,N}(0;\eps_N)+
\lim_{N\to\infty}\frac{1}{N} \log
\Big(1+\frac{Z_{u,N}(0;\eps_N)}{Z_{b,N}(0;\eps_N)} \Big)\\ &=
\phi(\beta)+ \lim_{N\to\infty}\Big(
-\frac{1}{2}k|\beta|\eps_N^2+o(1)\Big)\\ &=\phi(\beta)\, .
\end{align*}
This completes the proof.

\subsection{Proof of Proposition
\ref{prop:NecessitySpectral}}

Let $G^{(1)}_N=(\AA^{(1)}_N,\bzero)$ be a sequence of random
$k$-regular graphs with edge weights $A^{(1)}_{ij}=\beta<0$ for all
$(i,j)\in E(G^{(1)}_N)$, and $G^{(2)}_N=(\AA^{(2)}_N,\bzero)$ be a
sequence of bipartite random-$k$ regular graphs, with the same edge
weights. We draw the random graphs according to the respective
configuration models and assume for simplicity $N$ even (otherwise,
add an extra degree-$0$ node to the graph).

We take
\begin{align}
\atanh\Big(\frac{1}{k-1}\Big)<|\beta|< \frac{1}{4\sqrt{k-1}}\, .
\end{align}
Under the second condition, Corollary~\ref{cor:Antiferromagnetic} 
yields
\begin{align}
\lim_{N\to\infty}F(G^{(1)}_N)= \phi(\beta)= \log 2+\frac{k}{2}\log
\cosh(\beta)\, .
\end{align}

On the other hand, since $G^{(2)}_N$ is bipartite, the corresponding
free energy density is equal to the one of the same model with
positive weights $|\beta|$. This is computed in \cite[Theorem
2.4]{dembo2010ising}, which specializes to
\begin{align}
    \lim_{N\to\infty}F(G^{(2)}_N)&= \sup_{h\ge 0}\Psi(h;\beta)\, ,\\
    \Psi(h;\beta) &= \frac{k}{2}\log\cosh \beta
    -\frac{k}{2}\log\bigl(1+\tanh(|\beta|) \tanh^2(h)\bigr)\\ &\quad
    +\log\Bigl( \bigl(1+\tanh(|\beta|)\tanh(h)\bigr)^k
    +\bigl(1-\tanh(|\beta|)\tanh(h)\bigr)^k \Bigr)\, .
\end{align}
A simple calculation yields (defining $\theta=\tanh|\beta|$)
\begin{align}
    \Psi(h;\beta) =\phi(\beta)+\frac{k\theta}{2}\big[(k-1)
    \theta-1\big]\, h^2+O(h^4)\, .
\end{align}
Therefore, under the current choice for $\beta$, we have
\begin{align}
    \lim_{N\to\infty}F(G^{(2)}_N)&>\phi(\beta)\, .
\end{align}

On the other hand, both $G^{(1)}_N$ and $G^{(2)}_N$ converge locally weakly to the $k$-regular tree with edge weights $\beta$. The spectrum of $\AA^{(\ell)}_N$ is contained in $[-k|\beta|,k|\beta|]$ and $|\beta|$ can be chosen arbitrarily close to $\atanh\left(\frac1{k-1}\right)$ yielding a spectral width less than $2+\delta$ for any $\delta>0$ with a suitably large choice of $k$.
%

\newcommand{\etalchar}[1]{$^{#1}$}
\providecommand{\bysame}{\leavevmode\hbox to3em{\hrulefill}\thinspace}
\providecommand{\MR}{\relax\ifhmode\unskip\space\fi MR }
\providecommand{\MRhref}[2]{%
  \href{http://www.ams.org/mathscinet-getitem?mr=#1}{#2}
}
\providecommand{\href}[2]{#2}


\end{document}